\documentclass[10pt]{article}
\usepackage[utf8]{inputenc}
\usepackage{authblk}
\usepackage{amssymb}
\usepackage{amsmath}
\usepackage{amsthm}
\usepackage{mathrsfs}
\usepackage[margin=1in]{geometry}

\usepackage[nottoc]{tocbibind}
\usepackage{mathtools}


\newtheorem{thm}{Theorem}[section]
\newtheorem{prop}[thm]{Proposition}
\newtheorem{cor}[thm]{Corollary}
\newtheorem{lemma}[thm]{Lemma}

\theoremstyle{definition}
\newtheorem{defn}[thm]{Definition}

\newcommand\restr[2]{{
  \left.\kern-\nulldelimiterspace #1
  \vphantom{\big|}
  \right|_{#2}
  }}

\DeclarePairedDelimiter\floor{\lfloor}{\rfloor}

\newcommand{\fr}{\mathfrak}
\newcommand{\bb}{\mathbb}  
\newcommand{\Z}{\bb Z}
\newcommand{\Q}{\bb Q}

\newcommand{\C}{\bb C}
\newcommand{\N}{\bb N}
\newcommand{\Fp}{\bb F_p}

\newcommand{\e}{{\mathbf{e}}}

\newcommand{\isom}{\xrightarrow{\sim}}
\renewcommand{\rm}{\textrm}

\newcommand{\Span}{\normalfont\rm{Span}}
\newcommand{\sign}{\rm{sgn}}

\newcommand{\Sym}{\normalfont\rm{Sym}}
\renewcommand{\mod}{\normalfont\rm{ mod }}

\newcommand{\DP}{\mathcal{DP}}
\newcommand{\reg}{\rm{reg}}
    \newcommand{\End}{\normalfont\rm{End}}

\newcommand{\Hom}{\normalfont\rm{Hom}}
\newcommand{\Fun}{\normalfont\rm{Fun}}

\newcommand{\Ima}{\normalfont\rm{Im }}
\newcommand{\Int}{\normalfont\rm{Int}}
\newcommand{\Diff}{\normalfont\rm{Diff}}

\renewcommand{\geq}{\geqslant}
\renewcommand{\leq}{\leqslant}
\usepackage{titling}
\setlength{\droptitle}{-5em}
\usepackage{lipsum}

\author{Daniil Kalinov, Lev Kruglyak}
\title{The Rational Cherednik Algebra of Type $A_1$ with Divided Powers}
\date{February 2020}

\usepackage{hyperref}
\usepackage{cleveref}
\usepackage{lipsum}
\usepackage{times}
\usepackage{etoc}
\usepackage{cite}
\bibliographystyle{alpha}

\begin{document}
\maketitle
\abstract{
Motivated by the recent developments of the theory of Cherednik algebras in positive characteristic, we study rational Cherednik algebras with divided powers. In our research we have started with the simplest case, the rational Cherednik algebra of type $A_1$. We investigate its maximal divided power extensions over $R[c]$ and $R$ for arbitrary principal ideal domains $R$ of characteristic zero. In these cases, we prove that the maximal divided power extensions are free modules over the base rings, and construct an explicit basis in the case of $R[c]$. In addition, we provide an abstract construction of the rational Cherednik algebra of type $A_1$ over an arbitrary ring, and prove that this generalization expands the rational Cherednik algebra to include all of the divided powers.}

\tableofcontents

\section{Introduction}\label{introduction}
In this paper we study the rational Cherednik algebra of type $A_{n-1}$, which we denote by $\mathcal H_{t,c}(\fr S_n, \frak h)$. Cherednik algebras, also known as double affine Hecke algebras (DAHA), are a large family of algebras introduced by Cherednik in \cite{  cherednik1993macdonald} to prove Macdonald's conjectures concerning orthogonal polynomials for root systems. Since then Cherednik algebras have been discovered to be useful in many different contexts, most notably in the study of quantum Calogero-Moser systems (see \cite{etingof2007calogero}). Cherednik algebras have also been applied to topology, harmonic analysis, Verlinde algebras, Kac-Moody algebras and more. For a thorough exposition of theory of DAHA in general, see \cite{cherednik2005double}. Another good overview of the theory of rational Cherednik algebras is given in \cite{etingof2010lecture}.

The representation theory of Cherednik algebras over fields of characteristic zero has been well studied (see \cite{gordon2003baby}, \cite{etingof2010lecture}), but more recently a theory of Cherednik algebras in positive characteristic started to develop. Cherednik algebras in positive characteristic were investigated in  \cite{balagovic2013representations} and \cite{bezrukavnikov2006cherednik}. In \cite{latour2005representations}, the case of rank one algebras was discussed. Later in \cite{devadas2014representations}, \cite{devadas2016polynomial}, and \cite{cai2018cherednik} the Hilbert polynomials of some irreducible finite dimensional representations were calculated.

The current paper is a continuation of this research. Our main goal was to develop a theory of Cherednik algebras with divided powers in positive characteristic, so we have started with the simplest example, the rational Cherednik algebra of type $A_1$. To define the maximal divided power extension even in this case turned out to be an interesting problem. For more information on algebras with divided powers see \cite{jantzen2007representations} and \cite{lonergan2016strong}. The main reason for the study of this construction is the fact that naive reduction of the Cherednik algebra to positive characteristic makes the algebra ``too small", because a lot of operators become central and act by zero on important representations. To make representation theory richer one can work with the algebra extended by divided powers.

\begin{subsection}{Main Results}
In Section~\ref{introduction}, we define the rational Cherednik algebra of type $A$, introduce our notion of divided power extensions, and show an example of this notion applied to an algebra of differential operators. In Section~\ref{maximaldivpowexts}, we prove Theorem~\ref{freenessofdividedpowerextensiontheorem} and Theorem~\ref{basistheorem} which show the freeness of the maximal divided power extension of the rational Cherednik algebra of type $A_1$ over $R$ and $R[c]$, constructing a basis in the latter case. In Section~\ref{abstractconstructionsection}, we construct the maximal divided power extension in an abstract way over an arbitrary ring, and prove equivalence in most cases.
\end{subsection}

\subsection{The Rational Cherednik Algebra of Type $A$}\label{therationalcherednikalgebraoftypeA}
In this section we will define the rational Cherednik algebra of type $A_{n-1}$, which we denote $\mathcal H_{t,c}(\fr S_n, \fr h)$. In general we will work with the rational Cherednik algebra over an arbitrary ring, but here we introduce the standard notion over the field of complex numbers. Let $\fr S_n$ be the symmetric group on $n$ elements and consider its permutation representation on $\fr h = \C^n$ and its dual $\fr h^\ast$. For any $1\leq i \neq j \leq n$, let $s_{ij}\in \fr S_n$ denote the reflection switching $i$ and $j$. For each reflection $s_{ij}$, let $P_{ij}\subset \fr h$ be the hyperplane of fixed points of $s_{ij}$, i.e. $P_{ij} = \{ (\alpha_1,\ldots, \alpha_n) : \alpha_i=\alpha_j \}$. Let $\fr h_\reg = \fr h \setminus \bigcup_{i<j}P_{ij}$ be the set of regular points of $\fr h$, i.e. the set of points which are not fixed by any reflection. Let $\mathcal D(\fr h_\reg)$ be the algebra of differential operators on the set $\fr h_\reg$. We have a natural action of $\fr S_n$ on $\fr h_\reg$ and hence on $\mathcal D(\fr h_\reg)$. Note that $\mathcal D(\fr h_\reg)$ is isomorphic to the localization $\{x_i-x_j\}_{i\neq j}^{-1}\Diff(\C[\fr h])$ where $x_1,\ldots,x_n$ are the standard generators of $\C[\fr h]$. The following results and definitions are taken from \cite{etingof2010lecture}.

\begin{defn}
For any $1\leq i \leq n$ and $t,c\in \C$, the Dunkl operator is defined as
\[D_i = t\dfrac{\partial}{\partial x_i} - c\sum_{j\neq i}\dfrac{1}{x_i-x_j}(1-s_{ij}) \in \mathcal D(\fr h_\reg)\rtimes \C[\fr S_n].\]
\end{defn}

\begin{prop} We have the following properties for Dunkl operators:
\begin{itemize}
    \item For $\sigma \in \fr S_n$, we have $\sigma D_i \sigma^{-1} = D_{\sigma(i)}$
    \item $[D_i,D_j]=0$
    \item $[D_i, x_j] = cs_{ij}$
    \item $[D_i, x_i] = t-c\sum_{j\neq i}s_{ij}$
\end{itemize}
\end{prop}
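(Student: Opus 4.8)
The plan is to verify each of the four identities by direct computation in the algebra $\mathcal D(\fr h_\reg)\rtimes \C[\fr S_n]$, treating the Dunkl operator as a sum of its differential part and its reflection part. Throughout I would use the two defining commutation relations in the smash product, namely $\sigma f \sigma^{-1} = {}^\sigma\! f$ for $f\in \mathcal D(\fr h_\reg)$ and the Leibniz rule $\der_{x_i} x_j = x_j \der_{x_i} + \delta_{ij}$, together with the action $s_{ij}\cdot x_k = x_{s_{ij}(k)}$.

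The first property is the easiest: I would compute $\sigma D_i \sigma^{-1}$ term by term, using that $\sigma \der_{x_i}\sigma^{-1} = \der_{x_{\sigma(i)}}$, that $\sigma \frac{1}{x_i-x_j}\sigma^{-1} = \frac{1}{x_{\sigma(i)}-x_{\sigma(j)}}$, and that $\sigma s_{ij}\sigma^{-1} = s_{\sigma(i)\sigma(j)}$. Reindexing the sum over $j\neq i$ by $j'=\sigma(j)$ then collapses the expression to $D_{\sigma(i)}$. For the third and fourth properties, I would expand $[D_i, x_j]$ using bilinearity: the $t\der_{x_i}$ part contributes $t\,\delta_{ij}$ by the Leibniz rule, and the remaining work is to commute $x_j$ past the singular sum $c\sum_{k\neq i}\frac{1}{x_i-x_k}(1-s_{ik})$. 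The key identity here is that $\frac{1}{x_i-x_k}(1-s_{ik})$ applied against multiplication by $x_j$ produces a term $\frac{1}{x_i-x_k}(x_j - x_{s_{ik}(j)})s_{ik}$; this vanishes unless $j\in\{i,k\}$, and when it does not vanish the numerator $x_j - x_{s_{ik}(j)}$ is $\pm(x_i-x_k)$, cancelling the denominator and leaving a clean $\pm c\,s_{ik}$. Separating the cases $j\neq i$ (only $k=j$ survives, giving $c\,s_{ij}$) and $j=i$ (every $k$ survives, giving $t - c\sum_{k\neq i}s_{ik}$) yields the two formulas.

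The genuinely hard step is the second property, $[D_i, D_j]=0$, since here the singular rational functions and the reflection group elements interact in both factors simultaneously. I would expand the commutator into four pieces according to the differential/reflection decomposition of each $D$. The derivative-derivative part vanishes since partial derivatives commute. The cross terms and the reflection-reflection term must be shown to cancel in combination: the subtlety is that $\der_{x_i}$ does not commute with $\frac{1}{x_i-x_k}$, producing extra terms of the form $\frac{1}{(x_i-x_k)^2}$, and these second-order poles are exactly what must cancel against contributions coming from the product of two singular sums. The main obstacle is organizing this bookkeeping so that the double poles cancel pairwise and the surviving simple-pole terms assemble into something manifestly symmetric in $i$ and $j$; in practice one reduces to checking a rational-function identity on $\fr h_\reg$ of the shape
\[
\sum_{k}\frac{1}{(x_i-x_k)(x_j-x_k)} + \frac{1}{(x_i-x_k)(x_k-x_j)} = 0,
\]
which follows from partial fractions. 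Because this computation is standard and is exactly the content of \cite{etingof2010lecture}, I would present the reflection-equivariance and the two bracket formulas in full and then indicate that the commutativity $[D_i,D_j]=0$ reduces, after the decomposition above, to such a partial-fraction cancellation.
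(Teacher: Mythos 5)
The paper states this proposition without proof, importing it wholesale from \cite{etingof2010lecture}, so there is no internal argument to measure you against; I assess the proposal on its own terms. Your handling of the first, third, and fourth items is correct and essentially complete: conjugation plus reindexing of the sum gives the equivariance, and the computation $[1-s_{ik},x_j]=(x_j-x_{s_{ik}(j)})s_{ik}$, whose numerator is $0$ or $\pm(x_i-x_k)$, yields the two bracket formulas exactly as you describe.

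The gap is in the second item. The displayed ``identity'' is vacuous: for each fixed $k$ the two summands $\tfrac{1}{(x_i-x_k)(x_j-x_k)}$ and $\tfrac{1}{(x_i-x_k)(x_k-x_j)}$ are negatives of one another, so the expression is $0$ term by term and encodes no cancellation at all. The identity actually needed is the genuine three-term partial-fraction relation
\[
\frac{1}{(x_i-x_j)(x_i-x_k)}+\frac{1}{(x_j-x_i)(x_j-x_k)}+\frac{1}{(x_k-x_i)(x_k-x_j)}=0
\]
for distinct $i,j,k$. Moreover the bookkeeping is organized differently than you claim: the double poles produced by differentiating $\tfrac{1}{x_i-x_k}$ carry coefficient $tc$, while the product of the two singular sums carries coefficient $c^2$ (and produces its own double poles, from the terms with $k=j$, $l=i$), so these two families cannot cancel against each other; each must cancel internally. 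Rather than repairing this expansion, I would recommend the standard shortcut, which uses only the parts you have already proved: by the Jacobi identity together with items (1), (3), (4), one checks case by case that $[[D_i,D_j],x_k]=0$ for every $k$ (for instance, when $k=i$ both nested brackets contribute $c(D_i-D_j)s_{ij}$ and their difference vanishes). Since $[D_i,D_j]$ also annihilates $1$ and commutes with every multiplication operator, it annihilates all of $\C[\fr h_\reg]$ and hence is zero, the action of $\mathcal D(\fr h_\reg)\rtimes\C[\fr S_n]$ on $\C[\fr h_\reg]$ being faithful. This avoids the pole bookkeeping entirely.
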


We can now define the rational Cherednik algebra of type $A$.

\begin{defn}
For any $t,c\in \C$ with $t\neq 0$, let $\mathcal H_{t,c}(\fr S_n, \fr h)$ be the $\C$-subalgebra of $\mathcal D(\fr h_\reg)\rtimes \C[\fr S_n]$ generated by $\fr h^\ast$, $\fr S_n$ and $D_i$ for $i=1,\ldots,n$. This is the rational Cherednik algebra of type $A_{n-1}$ associated to $t,c$.
\end{defn}

\begin{prop}
For any $t,c\in \C$ with $t\neq 0$, the algebra $\mathcal H_{t,c}(\fr S_n,\fr h)$ is isomorphic to the quotient of the algebra $\C\langle x_1,\ldots,x_n,y_1,\ldots, y_n\rangle\rtimes \C [\fr S_n]$ by the relations
\[[x_i,x_j] = 0,\quad [y_i,y_j]=0, \quad [y_i,x_j]=cs_{ij},\quad [y_i,x_i] = t-\sum_{j\neq i}cs_{ij}.\]
\end{prop}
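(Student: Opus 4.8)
The plan is to build an explicit surjective algebra homomorphism from the abstractly presented algebra onto $\mathcal H_{t,c}(\fr S_n,\fr h)$ and then to prove it is injective by a PBW-type argument using the order filtration on differential operators. Write $A$ for the quotient of $\C\langle x_1,\ldots,x_n,y_1,\ldots,y_n\rangle\rtimes\C[\fr S_n]$ by the stated relations. First I would define $\phi\colon A\to\mathcal H_{t,c}(\fr S_n,\fr h)$ on generators by $x_i\mapsto x_i$, $y_i\mapsto D_i$, and $\sigma\mapsto\sigma$ for $\sigma\in\fr S_n$. Checking that $\phi$ is well defined amounts to verifying that each defining relation of $A$ maps to an identity in $\mathcal H_{t,c}$: the relations $[x_i,x_j]=0$ hold because the $x_i$ are commuting multiplication operators, the relations $[y_i,y_j]=0$, $[y_i,x_j]=cs_{ij}$ and $[y_i,x_i]=t-c\sum_{j\neq i}s_{ij}$ are exactly the commutator identities recorded in the Proposition on Dunkl operators, and the crossed-product relations $\sigma x_i\sigma^{-1}=x_{\sigma(i)}$, $\sigma D_i\sigma^{-1}=D_{\sigma(i)}$ again come from that Proposition. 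Since $\fr h^\ast$, $\fr S_n$ and the $D_i$ generate $\mathcal H_{t,c}$ by definition, $\phi$ is surjective.

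Next I would produce a spanning set for $A$ in normal form. Using the relation $y_ix_j=x_jy_i+cs_{ij}$ (and $y_ix_i = x_iy_i + t - c\sum_{j\neq i}s_{ij}$), each time we swap a $y$ past an $x$ we pick up a group-element term of strictly smaller combined $x,y$-degree; combining this with the commutativity of the $x_i$ among themselves and of the $y_i$ among themselves, and with the crossed-product relations that move group elements to sit between the $x$-block and the $y$-block, an induction on total degree shows that the ordered monomials
\[x^\alpha\,\sigma\,y^\beta,\qquad \alpha,\beta\in\Z_{\geq 0}^n,\ \sigma\in\fr S_n,\]
span $A$ over $\C$, where $x^\alpha = x_1^{\alpha_1}\cdots x_n^{\alpha_n}$ and similarly for $y^\beta$.

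The heart of the argument is to prove that the images $\phi(x^\alpha\sigma y^\beta)=x^\alpha\sigma D^\beta$ are linearly independent in $\mathcal D(\fr h_\reg)\rtimes\C[\fr S_n]$; this is where the hypothesis $t\neq 0$ is essential. I would filter $\mathcal D(\fr h_\reg)\rtimes\C[\fr S_n]$ by order of differential operator, so that its associated graded ring is $\C[\fr h_\reg][\xi_1,\ldots,\xi_n]\rtimes\C[\fr S_n]$, where $\xi_i$ is the principal symbol of $\partial/\partial x_i$. Because $D_i = t\,\partial/\partial x_i - c\sum_{j\neq i}(x_i-x_j)^{-1}(1-s_{ij})$ has order one with principal symbol $t\,\xi_i$, the symbol of $x^\alpha\sigma D^\beta$ in order $|\beta|$ is $t^{|\beta|}x^\alpha\sigma\,\xi^\beta$. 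Given a nontrivial relation $\sum c_{\alpha\beta\sigma}\,x^\alpha\sigma D^\beta=0$, I would pass to the top order $d=\max|\beta|$ occurring with nonzero coefficient and read off its symbol $\sum_{|\beta|=d}c_{\alpha\beta\sigma}\,t^{d}\,x^\alpha\sigma\,\xi^\beta$; collecting each $\sigma$ to the right of the $\xi$'s via the crossed-product action, and using that the monomials $x^\alpha\xi^\beta$ are linearly independent in $\C[\fr h_\reg][\xi]$ while the $\sigma\in\fr S_n$ are linearly independent over it, this symbol is nonzero. As $t\neq 0$ the factors $t^d$ do not vanish, so we reach a contradiction and linear independence follows.

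Finally, I would combine the three steps: $\phi$ carries the spanning set $\{x^\alpha\sigma y^\beta\}$ of $A$ to the linearly independent set $\{x^\alpha\sigma D^\beta\}$, so $\phi$ is injective; being also surjective, it is the desired isomorphism, and as a by-product $\{x^\alpha\sigma y^\beta\}$ is a $\C$-basis of $A$. I expect the main obstacle to be the linear independence in the third step, since the well-definedness of $\phi$ and the ordering argument are routine, whereas the symbol computation is exactly the point at which the nonvanishing of $t$ enters and makes the leading terms of the Dunkl operators nondegenerate.
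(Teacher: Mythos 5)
The paper does not prove this proposition at all --- it is quoted from the cited lecture notes of Etingof --- so there is no in-paper argument to compare against. Your proposal reconstructs the standard proof correctly: well-definedness and surjectivity of $\phi$ follow from the listed Dunkl-operator identities, the straightening argument gives the spanning set $x^\alpha\sigma y^\beta$, and the order filtration (with $\sigma$ in degree $0$ and the symbol of $D_i$ equal to $t\xi_i$, which is where $t\neq 0$ enters) gives linear independence of the images; this is exactly the faithfulness-of-the-Dunkl-representation argument in the reference, so no gap.
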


\begin{thm}[PBW Theorem] 
Let $\Sym(V)$ be the symmetric algebra of $V$. Let $x_1,\ldots, x_n$ be the standard basis for $\fr{h}^\ast$ and let $y_1,\ldots, y_n$ be the corresponding basis of $\fr{h}$. Then the map
\[\Sym(\fr h)\otimes_\C \C[\fr S_n] \otimes_\C \Sym(\fr h^\ast) \to \mathcal H_{t,c}(\fr S_n, \fr h), \]
which sends $y_i\otimes g \otimes x_i \mapsto D_igx_i$, is an isomorphism of $\C$-vector spaces.
\end{thm}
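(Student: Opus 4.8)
The plan is to prove the two halves of the claim separately: that the multiplication map, call it $\mu$, is surjective (the ordered monomials span $\mathcal H_{t,c}(\fr S_n,\fr h)$) and that it is injective (they are linearly independent). Throughout I write $D^a = D_1^{a_1}\cdots D_n^{a_n}$ and $x^b = x_1^{b_1}\cdots x_n^{b_n}$ for multi-indices $a,b\in\Z_{\geq 0}^n$, so that $\mu$ sends $y^a\otimes g\otimes x^b$ to $D^a g x^b$; since the Dunkl operators commute, $D^a$ is well defined.

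Surjectivity is the routine half. I would put the increasing filtration on $\mathcal H_{t,c}(\fr S_n,\fr h)$ in which each $x_i$ and each $D_i$ has degree $1$ and each $\sigma\in\fr S_n$ has degree $0$, and argue by induction on this degree that every product of generators lies in the span of the ordered monomials $D^a g x^b$. The four relations of the preceding proposition are exactly what is needed: $[x_i,x_j]=0$ and $[D_i,D_j]=0$ let me sort the $x$'s and the $D$'s among themselves; $\sigma x_i=x_{\sigma(i)}\sigma$ and $\sigma D_i=D_{\sigma(i)}\sigma$ let me push all group elements to the middle; and $[D_i,x_j]\in\C[\fr S_n]$ lets me move each $D$ to the left of each $x$ at the cost of a term of strictly smaller filtration degree, which is absorbed by the inductive hypothesis. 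Hence $\mu$ is onto.

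Injectivity is the substantive half, and here I would exploit the very definition of $\mathcal H_{t,c}(\fr S_n,\fr h)$ as a subalgebra of $\mathcal D(\fr h_\reg)\rtimes\C[\fr S_n]$ together with the hypothesis $t\neq 0$. Equip $\mathcal D(\fr h_\reg)$ with the order filtration, whose associated graded is the symbol algebra $\C[\fr h_\reg][\xi_1,\dots,\xi_n]$, and extend the filtration to $\mathcal D(\fr h_\reg)\rtimes\C[\fr S_n]$ by giving the group degree $0$. From the definition of the Dunkl operator, $D_i = t\,\partial/\partial x_i - c\sum_{j\neq i}\tfrac{1}{x_i-x_j}(1-s_{ij})$ has differential order $1$ with principal symbol $t\xi_i$, because the correction term involves no derivatives and hence lies in degree $0$. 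Consequently $D^a g x^b$ has order $|a|$ and principal symbol $t^{|a|}\,\xi^a\, g(x^b)\, g$ in $\C[\fr h_\reg][\xi]\rtimes\C[\fr S_n]$. Now suppose $\sum_{a,g,b}c_{a,g,b}\,D^a g x^b = 0$ and let $d$ be the largest value of $|a|$ occurring with a nonzero coefficient. Passing to the order-$d$ symbol kills every term with $|a|<d$ and yields
\[ t^{d}\sum_{g\in\fr S_n}\Big(\sum_{|a|=d,\,b}c_{a,g,b}\,\xi^a\, g(x^b)\Big)\, g = 0 \]
in the smash product. Decomposing $\C[\fr h_\reg][\xi]\rtimes\C[\fr S_n]=\bigoplus_{g\in\fr S_n}\C[\fr h_\reg][\xi]\,g$ lets me read off coefficients one group element at a time, and since $t\neq 0$, the monomials $\xi^a$ are linearly independent over $\C[\fr h_\reg]$, and for each fixed $g$ the polynomials $g(x^b)$ are a relabelling of the monomial basis of $\C[\fr h]$, every $c_{a,g,b}$ with $|a|=d$ must vanish. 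This contradicts the maximality of $d$ unless no nonzero coefficient exists; hence all $c_{a,g,b}=0$ and $\mu$ is injective.

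The main obstacle is precisely this linear-independence step, and the crucial leverage is the assumption $t\neq 0$: it guarantees $t^{d}\neq 0$, so the principal symbol of the top-order part does not collapse. This is exactly the point at which the argument — and indeed the theorem in the stated form — would break down for $t=0$, and it is the only place where the hypothesis is used. Everything else reduces to the elementary freeness of the symbol algebra as a module over $\C[\fr S_n]$ and the linear independence of monomials.
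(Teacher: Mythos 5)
Your argument is correct. Note that the paper does not prove this theorem itself --- it is quoted from the cited lecture notes of Etingof--Ma --- and your proof is essentially the standard one found there: spanning by straightening words with the commutation relations (the commutators $[D_i,x_j]$ landing in $\C[\fr S_n]$ drop the filtration degree), and linear independence via the Dunkl embedding into $\mathcal D(\fr h_\reg)\rtimes\C[\fr S_n]$, where the order filtration gives $D^a g x^b$ the principal symbol $t^{|a|}\xi^a\,g(x^b)\,g$ and $t\neq 0$ forces all top-order coefficients to vanish. You correctly isolate $t\neq 0$ as the only essential hypothesis, which is consistent with the paper restricting its definition to that case.
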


There is another useful algebra to consider when studying divided power extensions of $\mathcal H_{t,c}(\fr S_n, \fr h)$. Consider the permutation representation of $\fr S_n$ on $\fr{h}$ and its dual $\fr{h}^\ast$, with bases $y_1,\ldots, y_n$ and $x_1,\ldots, x_n$ respectively. Consider the subrepresentation $\fr l = \Span_{\C} \{ \hat{y}_i = y_i-y_1 : 1<i\leq n\}$ and its dual $\fr l^\ast = {\fr h}^\ast / \langle x_1+x_2+\cdots+x_n\rangle$. Let $\mathcal T(\fr l\oplus \fr l^\ast)$ be the tensor algebra of $\fr l\oplus \fr l^\ast$.

\begin{defn}
 $\mathcal H_{t,c}(\fr S_n, \fr l)$ is the $\C$-subalgebra of $\End(\Sym(\fr l^\ast))$ generated by $\fr l^\ast, \fr S_n$ and $D_{i}-D_{1}$.
\end{defn}

\begin{prop}
The algebra $\mathcal H_{t,c}(\fr S_n, \fr l)$ is the quotient of $\mathcal T(\fr l\oplus \fr l^\ast)\rtimes \C[\fr S_n]$ by the relations:
\begin{itemize}
    \item $[x_i,x_j]=0$
    \item $[\hat{y}_i, \hat{y}_j] = 0$
    \item $[\hat{y}_i,x_i] = t-cs_{1i} - c\sum_{k\neq i}s_{ik}$
    \item $[\hat{y}_i,x_k] = cs_{ik} - cs_{1k}$ for $k\neq i,1$
\end{itemize}
\end{prop}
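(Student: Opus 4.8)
The plan is to exhibit $\mathcal H_{t,c}(\fr S_n,\fr l)$ as a quotient of the presented algebra and then promote that surjection to an isomorphism by a PBW-type argument. Write $A=\big(\mathcal T(\fr l\oplus\fr l^\ast)\rtimes\C[\fr S_n]\big)/I$, with $I$ the ideal generated by the four listed families of relations. First I would build a homomorphism $\phi\colon A\to\mathcal H_{t,c}(\fr S_n,\fr l)$ sending $x_i\mapsto x_i$, $\sigma\mapsto\sigma$ and $\hat y_i\mapsto D_i-D_1$; for this it suffices to verify that these images satisfy the relations in $I$. Everything reduces to the Dunkl commutators of the earlier Proposition: from $\hat y_i=D_i-D_1$ one gets $[\hat y_i,x_k]=[D_i,x_k]-[D_1,x_k]$, and substituting $[D_i,x_k]=cs_{ik}$ for $k\neq i$ and $[D_i,x_i]=t-c\sum_{j\neq i}s_{ij}$ produces exactly $t-cs_{1i}-c\sum_{k\neq i}s_{ik}$ in the case $k=i$ and $cs_{ik}-cs_{1k}$ in the case $k\neq i,1$, while $[x_i,x_j]=0$ and $[\hat y_i,\hat y_j]=0$ are immediate. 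I would also check that these commutators are compatible with the defining relation $\sum_i x_i=0$ of $\fr l^\ast$, so that they descend to the quotient. Since $\fr l^\ast$, $\fr S_n$ and the $D_i-D_1$ generate the target by definition, $\phi$ is surjective.

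Next I would extract a spanning set for $A$ by normal ordering. The relations $[x_i,x_j]=0$ and $[\hat y_i,\hat y_j]=0$ allow reordering inside the $x$-block and inside the $\hat y$-block; the crossed-product relations move symmetric-group elements; and each mixed relation $[\hat y_i,x_k]=\cdots$ lets one push an $x$ to the right past a $\hat y$ at the cost of a strictly shorter monomial lying in $\C[\fr S_n]$. An induction on total length then shows that $A$ is spanned by ordered monomials $\hat y^{\alpha}\,g\,x^{\beta}$ with $g\in\fr S_n$; equivalently, there is a surjection of vector spaces $\Sym(\fr l)\otimes\C[\fr S_n]\otimes\Sym(\fr l^\ast)\twoheadrightarrow A$.

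The heart of the matter, and the step I expect to be the main obstacle, is to show that $\phi$ does not collapse this spanning set, i.e. that the operators $\hat y^{\alpha}\,g\,x^{\beta}$ (with each $\hat y_i$ acting as $D_i-D_1$) are linearly independent in $\End(\Sym(\fr l^\ast))$; this is exactly a PBW theorem for $\mathcal H_{t,c}(\fr S_n,\fr l)$. I would argue by symbols. Filtering $\Sym(\fr l^\ast)$-endomorphisms by order as differential operators, the combination $\hat y_i=D_i-D_1$ is well defined on $\Sym(\fr l^\ast)$ because $\der_i-\der_1$ preserves the ideal $(\sum_k x_k)$, it has order $1$, and its principal symbol is $t\,\hat\eta_i$, where $\hat\eta_i$ denotes the symbol of $\der_i-\der_1$; the rational-function-times-reflection corrections contribute only in order $0$. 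Thus $\hat y^{\alpha}\,g\,x^{\beta}$ has order $|\alpha|$ and symbol $t^{|\alpha|}\hat\eta^{\alpha}\,g\,x^{\beta}$, and since $t\neq 0$ and the families $\{\hat\eta^{\alpha}\}$, $\{g\}$, $\{x^{\beta}\}$ are bases of $\Sym(\fr l)$, $\C[\fr S_n]$ and $\Sym(\fr l^\ast)$ respectively, these symbols are linearly independent. A descending induction on the top order appearing in a hypothetical nontrivial relation then forces all coefficients to vanish, so $\phi$ is injective. Hence the vector-space surjection of the previous paragraph is an isomorphism and $\phi$ is an isomorphism of algebras. As a consistency check one can alternatively deduce this independence from the PBW theorem already stated for $\mathcal H_{t,c}(\fr S_n,\fr h)$, via the center-of-mass splitting $\fr h=\fr l\oplus\C$, under which $\mathcal H_{t,c}(\fr S_n,\fr h)\cong\mathcal H_{t,c}(\fr S_n,\fr l)\otimes\mathcal D(\A^1)$.
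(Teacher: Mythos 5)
The paper does not prove this proposition: it is one of the background results quoted without proof from \cite{etingof2010lecture}, so there is no in-paper argument to compare against. Your proposal is the standard proof from that literature and is sound: the commutator verification for $\phi$ is correct (including the consistency check that $\sum_k[\hat y_i,x_k]=0$, which is needed since the $x_i$ satisfy $x_1+\cdots+x_n=0$ in $\fr l^\ast$), the normal-ordering argument gives the spanning set, and the symbol/filtration argument (or, as you note, reduction to the already-stated PBW theorem for $\mathcal H_{t,c}(\fr S_n,\fr h)$ via the splitting $\fr h\cong\fr l\oplus\C$) gives linear independence, using $t\neq 0$ exactly where it is needed.
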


The algebras $\mathcal H_{t,c}(\fr S_n, \fr h)$ and $\mathcal H_{t,c}(\fr S_n, \fr l)$ are related in the following way. Let  $z_1=y_1-y_2$,\linebreak $z_2 = y_2-y_3, \ldots z_{n-1}=y_1-y_n$ and $Z= y_1+\cdots + y_n$. Let $w_1=x_1-x_2$, $w_2 = x_2-x_3, \ldots, w_{n-1}=x_1-x_n$ and $W= x_1+\cdots + x_n$. Note that $[Z,x_i] = t$ and $[W,y_i] = -t$, it follows that $[Z,w_i]=[W,z_i]=0$. Also $[Z,W] = n$. Furthermore, $[\sigma,Z] = [\sigma,W]=0$ for all $\sigma\in \fr S_n$. So we have two subalgebras, one generated by $z_1,\ldots, z_{n-1}$, $w_1,\ldots, w_{n-1}$ and $\fr S_n$ and the other generated by $Z$ and $W$. The first algebra is isomorphic to $\mathcal H_{t,c}(\fr S_n, \fr l)$, and the second algebra is isomorphic to $\C[q,{\partial_q}]$, the subalgebra of $\End(\C[q])$ generated by $q$ and $\frac{\partial}{\partial q}$ for some formal variable $q$. By the PBW theorem, it follows that
\[\mathcal H_{t,c}(\fr S_n, \fr h) \cong \mathcal H_{t,c}(\fr S_n, \fr l)\otimes_\C \C[q,\partial_q].\]

Another useful algebra to consider is the spherical subalgebra of $\mathcal H_{t,c}(\fr S_n, \fr h)$, denoted by $\mathcal B_{t,c}(\fr S_n, \fr h)$.

\begin{defn}
Let $\e_+\in \C[\fr S_n]$ be the symmetrizer, $\e_+ = \frac{1}{n!}\sum_{\sigma\in \fr S_n}\sigma$. Let $\e_-$ be the antisymmetrizer, $\e_- = \frac{1}{n!}\sum_{\sigma\in \fr S_n}\sign(\sigma) \sigma$ where $\sign(\sigma)$ is the sign of a permutation.
\end{defn}

\textit{Note: } $\e_+^2 = \e_+$ and $\e_-^2=\e_-$.

\begin{defn}
The spherical subalgebra of $\mathcal H_{t,c}(\fr S_n, \fr h)$ is $\mathcal B_{t,c}(\fr S_n, \fr h) = \e_+ \mathcal H_{t,c}(\fr S_n, \fr h) \e_+$. Let $\mathcal B_{t,c}(\fr S_n, \fr l) = \e_+ \mathcal H_{t,c}(\fr S_n, \fr l) \e_+$.
\end{defn}
Note that $\e_+ (\mathcal D(\fr h_\reg)\rtimes \C[\fr S_n] )\e_+ = \mathcal D(\fr h_\reg)^{\fr S_n}$, i.e. the $\fr S_n$-invariant subspace of $\mathcal D(\fr h_\reg)$. This means that $\mathcal B_{t,c}(\fr S_n, \fr h) \subset \mathcal D(\fr h_\reg)^{\fr S_n}$. Since $\fr S_n$ acts trivially on $\C[q,\partial_q]$, we have the decomposition
\[\mathcal B_{t,c}(\fr S_n, \fr h) \cong \mathcal B_{t,c}(\fr S_n, \fr l)\otimes_\C \C[q,\partial_q].\]
\subsection{Divided Power Extensions}\label{dividedpowerextensions}
We could not find a definition of divided powers in the existing literature which worked for our purposes, so we have developed our own framework.

Let $R$ be an integral domain of characteristic zero and let $V$ be a free $R$-module. Note that we have a canonical embedding $\End_R(V) \hookrightarrow \End_{R\otimes \Q}(V\otimes \Q)$\footnote{Unless stated otherwise, all tensor are assumed to be taken over $\Z$.}. 

\begin{defn}\label{defdividedpows}
For any submodule $A\subset \End_R(V)$, the maximal divided power extension of $A$, denoted $A^\DP$, is the submodule of $\End_{R\otimes\Q}(V\otimes \Q)$ given by:
\[A^\DP = (A\otimes \Q)\cap \End_R(V)\subset \End_{R\otimes \Q}(V\otimes \Q)\]
\end{defn}

Note that $A^\DP$ is an $R$-module, and if $A$ is an $R$-algebra, then $A^\DP$ is an $R$-algebra as well. Another insightful definition of $A^\DP$ arises through the notion of divisibility of an operator.

\begin{defn}\label{dividedpowerdefinition2}
For some operator $f\in \End_R(V)$, and integer $n\in \Z_{\ge 1}$, we say that $n$ divides $f$ if $f\otimes (1/n) \in \End_R(V)$. We write $n|f$. 
\end{defn}
The following definition is often easier to use in practice than Definition~\ref{defdividedpows}.
\begin{prop}
$A^\DP = \{ f\otimes {(1/n)} : f\in A, n\in \Z_{\geq 1}, n|f\}$.
\end{prop}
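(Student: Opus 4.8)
The plan is to prove the two inclusions separately; the easy direction is essentially a restatement of the definitions, and the content lies in a common-denominator argument for the other. Write $S = \{ f\otimes(1/n) : f\in A,\ n\in\Z_{\geq 1},\ n\mid f\}$ for the right-hand side, so the goal is $A^\DP = S$. Before either inclusion I would first pin down the ambient space: since $\Q$ is flat over $\Z$, tensoring the inclusion $A\hookrightarrow\End_R(V)$ yields an injection $A\otimes\Q\hookrightarrow\End_R(V)\otimes\Q$, which composes with the canonical embedding $\End_R(V)\otimes\Q\hookrightarrow\End_{R\otimes\Q}(V\otimes\Q)$. Hence both $A\otimes\Q$ and $\End_R(V)$ are genuine subsets of $\End_{R\otimes\Q}(V\otimes\Q)$, and the intersection defining $A^\DP$ takes place there.

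For the inclusion $S\subseteq A^\DP$, I would argue directly: given $f\otimes(1/n)\in S$, the element $f\otimes(1/n)$ lies in $A\otimes\Q$ because $f\in A$, and it lies in $\End_R(V)$ precisely because $n\mid f$ in the sense of Definition~\ref{dividedpowerdefinition2}. Therefore it belongs to $(A\otimes\Q)\cap\End_R(V) = A^\DP$. This direction needs no computation.

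For the reverse inclusion $A^\DP\subseteq S$, the key step will be to clear denominators. I would use that $A$ is an $R$-module, hence in particular a $\Z$-module: an arbitrary element $\sum_i a_i\otimes(p_i/q_i)$ of $A\otimes_\Z\Q$ can be placed over the common denominator $N=\mathrm{lcm}(q_i)$ and the terms collected, so it equals $a\otimes(1/N)$ for some $a\in A$ and $N\in\Z_{\geq 1}$. Applying this to an arbitrary $\xi\in A^\DP$, I obtain $\xi = a\otimes(1/N)$ with $a\in A$; and since $\xi\in\End_R(V)$ as well, the definition of divisibility gives $N\mid a$, whence $\xi\in S$.

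The only points requiring care — mild obstacles rather than genuine difficulties — are the common-denominator reduction just described and the bookkeeping that the symbol $f\otimes(1/n)$ denotes the same operator throughout. Under the embedding above, $a\otimes(1/N)$ is sent to the operator $\tfrac1N a$ on $V\otimes\Q$, which is exactly the operator appearing in the divisibility condition $N\mid a$ of Definition~\ref{dividedpowerdefinition2}; so the two uses of the notation agree and the argument closes. I expect the whole proof to be short once the ambient identifications are made explicit.
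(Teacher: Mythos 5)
Your proof is correct and follows essentially the same route as the paper, whose entire argument is the one-line observation that every element of $A\otimes\Q$ can be written as $f\otimes(1/n)$ with $f\in A$ and $n\in\Z_{\geq 1}$; your common-denominator reduction and the two explicit inclusions simply spell out the details the paper leaves implicit. No substantive difference.
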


\begin{proof}
This follows from the fact that every $a\in A\otimes \Q$ can be uniquely expressed as $f\otimes (1/n)$, for some $f\in A$ and $n\in \Z$.
\end{proof}

To show how this notion of divided power extensions applies to representation theory in characteristic $p$, suppose we had some faithful representation $\psi : A \to \End_R(V)$. The naive reduction modulo $p$ gives a representation $A\otimes \Fp \to \End_{R\otimes \Fp}(V\otimes \Fp)$. The center of $A\otimes \Fp$ can become large in characteristic $p$. Since central operators may act trivially on $V\otimes \Fp$, this can become problematic. If we instead take the divided power extension, we have a representation $A^\DP\otimes \Fp \to \End_{R\otimes \Fp}(V\otimes \Fp)$. This representation is faithful, since if the image of $Q\otimes 1$ was zero, then $Q=p^nL$ for some $n\geq 1$ and $L\in A^\DP$ such that $L\otimes 1\neq 0$ in $A^\DP\otimes\Fp$. This means that $Q\otimes 1=0$ in $A^\DP\otimes \Fp$, so the map is injective. In the cases when $R^\times \cap \Z = \{\pm 1\}$, $A^\DP\otimes \Fp$ contains a nonzero scaled copy of each nonzero operator in $A$. This can make the representation theory of $A^\DP$ richer than that of $A$ in characteristic $p$.

When computing maximal divided power extensions of a ring, it often helps to decompose the ring into smaller pieces for which the maximal divided power extensions are already known.

\begin{prop}\label{directsumdividedpowers}
Suppose $\{A_i\}_{i\in I}$ is a family of $R$-submodules of $\End_R(V)$ and suppose that for any $a_i\in A_i$, $d|\sum_{i\in I}a_i$ in $\End_R\left(V\right)$ implies that $d|a_i$ for all $i\in I$. Then, in $\End_{R\otimes \Q}\left(V\otimes \Q\right)$, we have $\left(\bigoplus_{i\in I}A_i\right)^\DP = \bigoplus_{i\in I}A_i^\DP$.
\end{prop}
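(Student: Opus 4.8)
The plan is to prove the two inclusions separately, with the easy direction being $\bigoplus_{i\in I}A_i^\DP \subseteq \left(\bigoplus_{i\in I}A_i\right)^\DP$ and the direction that uses the hypothesis being the reverse containment. Throughout I would work with the characterization from the previous proposition, namely that $B^\DP = \{ f\otimes (1/n) : f\in B,\ n\in \Z_{\geq 1},\ n|f \}$ for a submodule $B$, since the divisibility hypothesis is phrased in exactly those terms. Before starting the inclusions, I would note the standing assumption that the sum $\bigoplus_{i\in I}A_i$ is direct inside $\End_R(V)$, so that an element of the sum has a well-defined collection of components $a_i\in A_i$, almost all zero; this is what lets me speak of "the $A_i$-component'' of an operator unambiguously.

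For the inclusion $\bigoplus_{i\in I}A_i^\DP \subseteq \left(\bigoplus_{i\in I}A_i\right)^\DP$, take a finite sum $\sum_i (f_i\otimes (1/n_i))$ with each $f_i\in A_i$ and $n_i | f_i$. Let $N$ be the least common multiple of the finitely many $n_i$ appearing. Then each term equals $(N/n_i)f_i \otimes (1/N)$, and $g := \sum_i (N/n_i)f_i$ lies in $\bigoplus_i A_i$; moreover $N | g$ by construction since $g\otimes(1/N)$ is exactly our original sum, which lands in $\End_R(V)$. Hence the sum is of the form $g\otimes(1/N)$ with $g$ in the direct sum and $N|g$, so it lies in $\left(\bigoplus_i A_i\right)^\DP$. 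This direction is purely formal and does not invoke the hypothesis.

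For the reverse inclusion I would take an arbitrary element of $\left(\bigoplus_i A_i\right)^\DP$, written as $g\otimes(1/d)$ with $g = \sum_i a_i \in \bigoplus_i A_i$ (finite sum, $a_i\in A_i$) and $d\in\Z_{\geq 1}$ satisfying $d|g$ in $\End_R(V)$. This is exactly the situation the hypothesis is designed for: $d \mid \sum_i a_i$ forces $d \mid a_i$ for every $i$. Therefore each $a_i\otimes(1/d)\in \End_R(V)$, which means $a_i\otimes(1/d)\in A_i^\DP$, and summing gives $g\otimes(1/d) = \sum_i \bigl(a_i\otimes(1/d)\bigr)\in \bigoplus_i A_i^\DP$. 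Combining the two inclusions yields equality.

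The content of the argument is entirely concentrated in the reverse inclusion, and there the hypothesis does all the work in one stroke, so there is no genuine obstacle once the hypothesis is granted. The only point requiring a little care is bookkeeping: ensuring that every element in sight is a genuinely finite sum (so that taking an lcm in the forward direction is legitimate) and that the decomposition of $g$ into components is well-defined, which relies on the directness of the sum. A minor subtlety worth flagging is that the characterization $B^\DP = \{f\otimes(1/n)\}$ is being applied both to each $A_i$ and to the whole direct sum, so I would make sure the ambient module $V$ and the embedding $\End_R(V)\hookrightarrow \End_{R\otimes\Q}(V\otimes\Q)$ are the same in all cases, which they are by construction.
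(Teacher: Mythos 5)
Your proof is correct and follows essentially the same route as the paper's: the hypothesis is applied directly in the hard inclusion to split $d\mid\sum_i a_i$ into $d\mid a_i$, and a common denominator (you use the lcm, the paper uses the product of the $d_i$ — an immaterial difference) handles the easy inclusion. Your explicit remarks about finiteness of the sums and well-definedness of components are careful bookkeeping that the paper leaves implicit, but the argument is the same.
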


\textit{Note:} The above divisibility condition implies that the sum $\bigoplus_{i\in I}A_i$ is direct.

\begin{proof}
For any $Q=\sum_{i\in I}a_i$, if $d|Q$ then by assumption, $d|a_i$ for all $i\in I$ so $\frac{Q}{d} = \sum_{i\in I}\frac{a_i}{d}\in \bigoplus_{i\in I}A_i^\DP$. So $\left(\bigoplus_{i\in I}A_i\right)^\DP \subset \bigoplus_{i\in I}A_i^\DP$. Conversely, if $Q=\sum_{i\in I}\frac{a_i}{d_i}\in \bigoplus_{i\in I}A_i^\DP$ we have
\[Q = \frac{\sum_{i\in I}a_i\prod_{j\in I, j\neq i}d_i}{\prod_{i\in I}d_i}.\]
So $Q\in \left(\bigoplus_{i\in I}A_i\right)^\DP$ and $\left(\bigoplus_{i\in I}A_i\right)^\DP \supset \bigoplus_{i\in I}A_i^\DP$, so the result follows.
\end{proof}

\begin{prop}\label{tensorproductdividedpowers}
Let $V$ and $W$ be free $R$ modules. Suppose that $A=\bigoplus_{i\in I} A_i\subset \End_R(V)$ and $B=\bigoplus_{j\in J} B_j\subset \End_R(W)$ satisfy the divisibility condition of Proposition~\ref{directsumdividedpowers}. Furthermore, suppose that $A_i\cong B_j\cong A_i^\DP \cong B_j^\DP\cong R$. Finally, we make the additional requirement that $R^\times \cap \Z =\{\pm 1\}$. Then in $\End_{R\otimes \Q}(V\otimes W\otimes \Q)$,
\[(A\otimes_R B)^\DP = A^\DP \otimes_R B^\DP.\]
\end{prop}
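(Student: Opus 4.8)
The plan is to prove the two inclusions separately, after reducing everything to the distinguished generators of the rank-one pieces. Since each $A_i^\DP\cong R$ and $B_j^\DP\cong R$, I fix generators so that $A_i^\DP=R\alpha_i$ and $B_j^\DP=R\beta_j$ with $\alpha_i\in\End_R(V)$ and $\beta_j\in\End_R(W)$. Because $A=\bigoplus_i A_i$ and $B=\bigoplus_j B_j$ satisfy the divisibility condition, Proposition~\ref{directsumdividedpowers} gives $A^\DP=\bigoplus_i R\alpha_i$ and $B^\DP=\bigoplus_j R\beta_j$, hence $A^\DP\otimes_R B^\DP=\bigoplus_{i,j}R(\alpha_i\otimes\beta_j)$. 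Rationalizing, and using that divided powers do not change the rationalization, $A\otimes\Q=\bigoplus_i(R\otimes\Q)\alpha_i$ and $B\otimes\Q=\bigoplus_j(R\otimes\Q)\beta_j$, so $\{\alpha_i\otimes\beta_j\}$ is an $(R\otimes\Q)$-basis of $(A\otimes_R B)\otimes\Q$ inside $\End_{R\otimes\Q}(V\otimes W\otimes\Q)$; here I use that the natural map $\End_R(V)\otimes_R\End_R(W)\to\End_R(V\otimes W)$ is injective for free modules, so the $q_{ij}$ below are well defined and unique.

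For the inclusion $A^\DP\otimes_R B^\DP\subseteq(A\otimes_R B)^\DP$, it suffices to place each generator $\alpha_i\otimes\beta_j$ in the right-hand side. Writing $m_i\alpha_i=a_i\in A_i$ and $n_j\beta_j=b_j\in B_j$ for suitable positive integers (possible since $\alpha_i\in A_i^\DP$ and $\beta_j\in B_j^\DP$), I get $a_i\otimes b_j=m_in_j(\alpha_i\otimes\beta_j)\in A\otimes_R B$ while $\alpha_i\otimes\beta_j$ is already integral; thus $\alpha_i\otimes\beta_j=(a_i\otimes b_j)\otimes\tfrac{1}{m_in_j}\in\big((A\otimes_R B)\otimes\Q\big)\cap\End_R(V\otimes W)=(A\otimes_R B)^\DP$. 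Since the latter is an $R$-module containing all the generators, the inclusion follows.

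The hard direction is $(A\otimes_R B)^\DP\subseteq A^\DP\otimes_R B^\DP$, and this is where the main work lies. I take $Q\in(A\otimes_R B)^\DP$ and expand $Q=\sum_{i,j}q_{ij}(\alpha_i\otimes\beta_j)$ with coefficients $q_{ij}\in R\otimes\Q$; the goal is to show every $q_{ij}$ actually lies in $R$. The key is a two-step descent. Fixing a pair of basis indices $s,s'$ for $V$ and reading off the corresponding matrix block of $Q$ (the operator on $W$ sending $w$ to the $v_{s'}$-component of $Q(v_s\otimes w)$) produces $\sum_j\big(\sum_i q_{ij}(\alpha_i)_{s's}\big)\beta_j\in\End_R(W)$, whose entries are among the entries of $Q$ and hence integral. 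This operator lies in $B\otimes\Q$, so by Proposition~\ref{directsumdividedpowers} it lies in $B^\DP=\bigoplus_j R\beta_j$; comparing coefficients against the basis $\{\beta_j\}$ forces $\sum_i q_{ij}(\alpha_i)_{s's}\in R$ for every $j$ and every $s,s'$. Now fixing $j$ and letting $s,s'$ vary, the operator $P_j=\sum_i q_{ij}\alpha_i$ has all matrix entries in $R$, so $P_j\in\End_R(V)\cap(A\otimes\Q)=A^\DP=\bigoplus_i R\alpha_i$; comparing coefficients against $\{\alpha_i\}$ gives $q_{ij}\in R$, as desired.

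The principal obstacle is precisely this last direction: one cannot conclude $q_{ij}\in R$ by a naive entrywise argument, because the $q_{ij}$ a priori live in $R\otimes\Q$ rather than in $\Q$, so integrality of the products $q_{ij}(\alpha_i)_{s's}(\beta_j)_{t't}$ does not by itself separate the indices. The resolution is to apply the direct-sum divisibility hypothesis of Proposition~\ref{directsumdividedpowers} twice—first to the $B$-factor after slicing $Q$ into blocks, then to the $A$-factor—so that the freeness and directness of each side perform the separation. The remaining points (injectivity of the tensor map on endomorphism rings, and the fact that the hypotheses $A_i\cong A_i^\DP\cong R$, $B_j\cong B_j^\DP\cong R$ together with $R^\times\cap\Z=\{\pm1\}$ yield well-defined single generators $\alpha_i,\beta_j$) are routine and feed into the bookkeeping above.
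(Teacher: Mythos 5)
Your proof is correct, but it takes a genuinely different route from the paper's. The paper first verifies that the doubly-indexed family $\{A_i\otimes_R B_j\}_{(i,j)\in I\times J}$ itself satisfies the divisibility hypothesis of Proposition~\ref{directsumdividedpowers}, thereby reducing everything to the rank-one statement $(A_i\otimes_R B_j)^\DP = A_i^\DP\otimes_R B_j^\DP$; that statement is then proved by a gcd argument on sets of integer divisors of the values of the generators, and the step $\gcd(N_{x_i})=\gcd(N_{y_j})=1$ is precisely where the hypothesis $R^\times\cap\Z=\{\pm 1\}$ is invoked. You instead keep the whole element $Q=\sum_{i,j}q_{ij}(\alpha_i\otimes\beta_j)$ and run a two-step slicing argument: the $W$-blocks of $Q$ attached to pairs of basis vectors of $V$ are integral elements of $B\otimes\Q$, hence lie in $B^\DP=\bigoplus_j R\beta_j$, which forces $\sum_i q_{ij}(\alpha_i)_{s's}\in R$; reassembling over $s,s'$ places $P_j=\sum_i q_{ij}\alpha_i$ in $\End_R(V)\cap(A\otimes\Q)=A^\DP$ and yields $q_{ij}\in R$. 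Your route dispenses with both the separate divisibility check for the doubly-indexed family and the gcd lemma, at the cost of working entrywise with respect to chosen bases of $V$ and $W$ (legitimate, since both are assumed free); both proofs ultimately rest on the same linear-independence fact, that $\{\alpha_i\otimes\beta_j\}$ is an $(R\otimes\Q)$-basis of $(A\otimes_R B)\otimes\Q$ inside $\End_{R\otimes\Q}(V\otimes W\otimes\Q)$. One remark: your argument never actually uses $R^\times\cap\Z=\{\pm 1\}$ (the existence of the generators $\alpha_i,\beta_j$ needs only $A_i^\DP\cong B_j^\DP\cong R$ as $R$-modules), so your proof suggests that this hypothesis is an artifact of the paper's gcd argument rather than of the statement; your closing sentence attributing the well-definedness of the generators to that hypothesis is the only inaccuracy, and it is harmless.
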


\begin{proof}
First we claim that if $d|\sum_{(i,j)\in I\times J}a_i\otimes b_j$ then $d|a_i\otimes b_j$. Let $x_i$ be the basis element for $A_i^\DP$ and let $y_j$ be the basis element for $B_j^\DP$. Write $a_i\otimes b_j = k_{ij}x_i\otimes y_i$. If $d|\sum_{(i,j)\in I\times J}a_i\otimes b_j$, by definition there exists $q_{ij}$ such that $\sum_{(i,j)\in I\times J}k_{ij}x_i\otimes y_j= d\sum_{(i,j)\in I\times J}q_{ij}x_i\otimes y_j$. This implies $\sum_{(i,j)}(k_{ij}-dq_{ij})(x_i\otimes y_i)=0$. By linear independence, $k_{ij}=dq_{ij}$ and so $d|a_i\otimes b_j$. 

Next we claim that $(A_i\otimes_R B_j)^\DP =  A_i^\DP\otimes_R B_j^\DP$. To show $A_i^\DP\otimes_R B_j^\DP \subset (A_i\otimes_R B_j)^\DP$, let $\frac{a_i}{d_i}\otimes \frac{b_j}{k_j}\in (A_i\otimes_R B_j)^\DP$ for some $a_i\in A_i$, $b_j\in B_j$, and $k_j,d_i\in \Z_{>0}$. Then
\[\frac{a_i}{d_i}\otimes\frac{b_j}{k_j} = \frac{a_ik_j\otimes b_jd_i}{d_ik_j}\in (A_i\otimes_R B_j)^\DP.\]
Now to show that $(A_i \otimes_R B_j)^\DP\subset A_i^\DP\otimes_R B_j^\DP$, suppose $d|a_i\otimes b_j=k_{ij}x_i\otimes y_j$ for some $d\in \Z_{\geq 1}$. For an operator $f$ on some space $Z$, let $N_f = \{ n : n | f(z) \text{ for some }z\in Z\}$. Note that $N_{x_i}\cdot N_{y_j} \subset N_{x_i\otimes y_j}$. We claim that $\gcd(N_{x_i})=\gcd(N_{y_j})=1$. Indeed, if $d|N_{x_i}$ for some $d\in \Z_{>0}$ then $\frac{1}{d}x_i \in A_i^\DP$ and so $\frac{1}{d}\in R$, a contradiction unless $d=1$. The same argument shows that $\gcd(N_{y_j})=1$. We claim that $\gcd(N_{x_i\otimes y_j})=1$. Indeed if $d|N_{x_i\otimes y_j}$, then $d|N_{x_i}N_{y_j}$. Pick some $\ell \in N_{y_j}$. Then $d|\ell N_{x_i}$, but since $\gcd(\ell N_{x_i})=\ell$ it follows that $d|\ell$. Since $\ell$ was arbitrary, $d|N_{y_j}$, which implies that $d=1$. Now since $d|k_{ij}x_i\otimes y_j$, we have $d|k_{ij}N_{x_i\otimes y_j}$, so by the previous argument, $d|k_{ij}$. So
\[\frac{a_i\otimes b_j}{d} = \frac{k_{ij}x_i\otimes b_j}{d} = \frac{k_{ij}}{d}x_i\otimes b_j \in A_i^\DP\otimes B_j^\DP.\]
Now to combine the above claims, we have
\[(A\otimes_R B)^\DP = \left(\bigoplus_{(i,j)\in I\times J}A_i\otimes_R B_j\right)^\DP = \bigoplus_{(i,j)\in I\times J}(A_i\otimes_R B_j)^\DP = \bigoplus_{(i,j)\in I\times J}A_i^\DP\otimes_R B_j^\DP = A^\DP \otimes_R B^\DP,\]
where the second equality follows by the first claim and Proposition~\ref{directsumdividedpowers}. This completes the proof.
\end{proof}

\subsection{Polynomial Differential Operators}\label{polynomialdifferentialoperators}
To show a known example of divided power extensions, we consider the integral Weyl algebra \linebreak $W(\Z) = \Z\langle x,y\rangle/(yx-xy-1)$ and its faithful polynomial representation in $\End(\Z[x])$ given by $x\mapsto x\times$ (i.e. multiplication by $x$) and $y\mapsto \partial_x$ where $\partial_x=\frac{\partial}{\partial x}$. Let $\Z[x, \partial_x]\subset \End(\mathbb Z[x])$ be the image of this representation. We call this the ring of integral polynomial differential operators. Similarly define $\Q[x,\partial_x]$. The results of this section aren't original. Nonetheless we decided to include their proofs, adapted to fit within our framework of divided power extensions, because they illustrate a simple example of the methods we use in the case of Cherednik algebra in Section~\ref{freeness}.

\begin{defn}
Let ${t\choose k}\in \Q[t]$ be the polynomial ${t\choose k} = \frac{t(t-1)\cdots (t-k+1)}{k!}\in\Q[t]$, and $P_k(t) = k! {t\choose k} \in \Z[t]$. Let $ \mathcal D_x^k$ be the Hasse derivative, whose action is given on the basis by $ \mathcal D_x^k x^n = {n\choose k}x^{n-k} = \frac{\partial_x^k}{k!}x^{n-k}$ and extending linearly.
\end{defn}

\begin{prop}[Newton's Interpolation Formula]
Define the zeroth order forward difference operator as $\Delta^0 f(n)=f(n)$, and define the higher order operators as $\Delta^kf(n)=\Delta^{k-1}f(n+1)-\Delta^{k-1}f(n)$.
Let $f(t)$ be a polynomial. Then $f(t) = \sum_{k\geq 0}{t\choose k} \Delta^kf(0).$
\end{prop}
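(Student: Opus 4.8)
The plan is to exploit the fact that the binomial polynomials $\left\{{t\choose k}\right\}_{k\geq 0}$ form a $\Q$-basis for $\Q[t]$, and that the forward difference operator $\Delta$ acts on this basis as a simple index shift. Since ${t\choose k}$ has degree exactly $k$ in $t$ (with leading coefficient $1/k!$), these polynomials have pairwise distinct degrees and are therefore linearly independent; by a triangularity argument they also span, so every polynomial $f$ of degree $d$ can be written uniquely as $f(t)=\sum_{k=0}^{d}c_k{t\choose k}$ for some $c_k\in\Q$. The entire content of the formula is then the identification $c_k=\Delta^k f(0)$.

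The key computation is that $\Delta{t\choose k}={t\choose k-1}$, interpreting ${t\choose m}=0$ for $m<0$. This is exactly Pascal's rule ${t+1\choose k}={t\choose k}+{t\choose k-1}$, which I would verify as an identity of polynomials in $t$ (both sides are polynomials of degree $k$ agreeing at all nonnegative integers, hence equal) rather than merely as an identity of integers. Iterating, $\Delta^j{t\choose k}={t\choose k-j}$, so applying $\Delta^j$ to the expansion gives $\Delta^j f(t)=\sum_{k\geq j}c_k{t\choose k-j}$.

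Finally I would evaluate at $t=0$. Since ${0\choose 0}=1$ and ${0\choose m}=0$ for $m\geq 1$, all terms with $k>j$ vanish and only the $k=j$ term survives, yielding $\Delta^j f(0)=c_j$. Substituting back into the expansion of $f$ produces the claimed formula $f(t)=\sum_{k\geq 0}{t\choose k}\Delta^k f(0)$, where the sum is finite because $c_k=0$ for $k>\deg f$.

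The only genuinely delicate point is ensuring all the identities are read at the level of polynomials in the formal variable $t$ rather than as numerical identities for integer arguments; once Pascal's rule is established as a polynomial identity, the rest is a clean bookkeeping of the shift operator together with a single evaluation at zero. An alternative route is a direct induction on $\deg f$, subtracting off ${t\choose d}\Delta^d f(0)$ to reduce the degree, but the basis-and-shift argument above is more transparent and makes the role of $\Delta$ as a discrete analogue of differentiation explicit.
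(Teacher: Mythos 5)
Your proof is correct and complete: the basis property of $\bigl\{{t\choose k}\bigr\}_{k\geq 0}$, the shift identity $\Delta{t\choose k}={t\choose k-1}$ via Pascal's rule read as a polynomial identity, and the evaluation at $t=0$ together give exactly the claimed expansion. The paper states this classical proposition without proof, so there is no argument of the authors to compare against; your write-up is the standard one and would serve as a valid proof here.
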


\begin{lemma}\label{intvaldivlemma}
Let $f$ be some integer-valued polynomial, and write $f(n)=\sum_{k\geq 0}\alpha_k {n\choose k} $ for some integer coefficients $\alpha_k$. If $d|f(n)$ for all $n\in\mathbb{Z}_{\geq 0}$, then $d|\alpha_k$ for all $k\geq 0$.
\end{lemma}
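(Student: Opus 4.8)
The plan is to combine Newton's Interpolation Formula with the uniqueness of the binomial-basis representation, and then observe that the coefficients are integer combinations of the values $f(j)$. Since the paper's Newton's Interpolation Formula asserts that every polynomial $f(t)$ satisfies $f(t) = \sum_{k\geq 0}\binom{t}{k}\Delta^k f(0)$, and since the family $\left\{\binom{t}{k}\right\}_{k\geq 0}$ is linearly independent over $\Q$ (it forms a basis of $\Q[t]$, being triangular with respect to the monomial basis), the representation $f(n)=\sum_{k\geq 0}\alpha_k\binom{n}{k}$ given in the hypothesis is unique. Comparing coefficients therefore forces $\alpha_k = \Delta^k f(0)$ for every $k$.

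First I would establish, by induction on $k$, the closed form
\[
\Delta^k f(0) = \sum_{j=0}^{k}(-1)^{k-j}\binom{k}{j}f(j).
\]
The base case $k=0$ is the definition $\Delta^0 f(0)=f(0)$, and the inductive step follows by substituting $\Delta^k f(0)=\Delta^{k-1}f(1)-\Delta^{k-1}f(0)$ and using Pascal's rule $\binom{k}{j}=\binom{k-1}{j-1}+\binom{k-1}{j}$ to reassemble the sum. The essential point is that each coefficient $(-1)^{k-j}\binom{k}{j}$ is an integer, so $\Delta^k f(0)$ is an integer linear combination of the finitely many values $f(0),f(1),\dots,f(k)$.

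With these two ingredients the conclusion is immediate: by hypothesis $d\mid f(j)$ for every $j\in\Z_{\geq 0}$, in particular for $0\leq j\leq k$, so $d$ divides the integer combination $\sum_{j=0}^{k}(-1)^{k-j}\binom{k}{j}f(j)=\Delta^k f(0)=\alpha_k$. As $k$ was arbitrary, $d\mid\alpha_k$ for all $k\geq 0$.

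The only genuine point requiring care — rather than a real obstacle — is justifying that matching the two expansions legitimately yields $\alpha_k=\Delta^k f(0)$; this rests on the linear independence of the $\binom{t}{k}$, which is what makes the coefficient extraction well defined. Everything else is a routine induction, and no characteristic-zero or domain hypotheses on the ground ring are needed here since the entire argument takes place with integer coefficients.
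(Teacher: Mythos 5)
Your proof is correct and follows essentially the same route as the paper's: both identify $\alpha_k$ with $\Delta^k f(0)$ via Newton's interpolation formula and then observe that the change of basis from the values $f(0),\dots,f(k)$ to the coefficients is integral in both directions. The only cosmetic difference is that you make the inverse of the Pascal matrix explicit through the alternating-sum formula $\Delta^k f(0)=\sum_{j=0}^{k}(-1)^{k-j}\binom{k}{j}f(j)$, whereas the paper argues abstractly from $\det L=1$ that $L^{-1}$ has integer entries.
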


\begin{proof}
Suppose $f(n) \equiv 0\mod d$ for all $n$. Let $N=\deg f$, so $\alpha_n=0$ whenever $n>N$. By Newton's Interpolation formula, $LA=F\equiv 0\mod d$ where $(L)_{ij} = {i\choose j}$ is the $(N+1)\times (N+1)$ lower triangular Pascal matrix, $A = (\alpha_0, \ldots, \alpha_N)$, and $F=(f(0), \ldots, f(N))$. Note that $\det L=1$. Multiplying both sides by $L^{-1}$, we get that $\alpha_k\equiv 0\mod d$ for all $0\leq k \leq N$. It follows that $\alpha_k\equiv 0\mod d$ for all $k\geq 0$.
\end{proof}

The above lemma implies the following classical result.

\begin{prop}[Newton]\label{newtonfree}
Let $\Int(\Z[x]) = \{f\in \Q[x] : f(\Z)\subset \Z\}$. Then $\Int(\Z[x])$ is a free $\Z$-module generated by the polynomials ${t\choose k}$.
\end{prop}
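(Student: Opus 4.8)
The plan is to prove that $\Int(\Z[x])$ is a free $\Z$-module with basis $\left\{\binom{t}{k}\right\}_{k\geq 0}$ by establishing two things: that these binomial coefficients lie in $\Int(\Z[x])$ and span it over $\Z$, and that they are $\Z$-linearly independent. The linear independence is immediate since the $\binom{t}{k}$ have distinct degrees (indeed $\binom{t}{k}$ has degree exactly $k$), so no nontrivial finite $\Z$-linear combination can vanish; this is inherited from their linear independence over $\Q$ in $\Q[t]$. The containment $\binom{t}{k}\in\Int(\Z[x])$ is the classical fact that $\binom{n}{k}\in\Z$ for all integers $n$, which one checks separately for $n\geq k\geq 0$, for $0\leq n<k$ (where the product $n(n-1)\cdots(n-k+1)$ has a zero factor, giving $0$), and for negative $n$ via the identity $\binom{-m}{k}=(-1)^k\binom{m+k-1}{k}$.

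The substantive step is showing that every $f\in\Int(\Z[x])$ is a $\Z$-linear combination of the $\binom{t}{k}$, and this is exactly where I would invoke Lemma~\ref{intvaldivlemma}. First I would apply Newton's Interpolation Formula to write any $f\in\Q[x]$ as $f(t)=\sum_{k\geq 0}\binom{t}{k}\Delta^k f(0)$, a finite sum since $f$ is a polynomial. This already exhibits $f$ as a $\Q$-linear combination of the binomial basis with coefficients $\alpha_k=\Delta^k f(0)$; it remains to show that when $f\in\Int(\Z[x])$, these coefficients are integers. Since $f(\Z)\subset\Z$, in particular $d|f(n)$ for $d=1$ is trivial, so I would instead argue directly: the coefficients $\alpha_k=\Delta^k f(0)$ are finite $\Z$-linear combinations of the values $f(0),f(1),\dots,f(k)$ (each application of $\Delta$ only adds and subtracts values of $f$ at integer points), hence each $\alpha_k\in\Z$ because every $f(j)\in\Z$.

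I expect the main obstacle to be bookkeeping rather than conceptual: one must be careful to note that the expansion $f(t)=\sum_{k\geq 0}\binom{t}{k}\alpha_k$ is a polynomial identity in $\Q[t]$ valid for \emph{all} $t$, not merely an equality of values at integers, so that it genuinely expresses $f$ in the claimed basis. The cleanest way to secure this is to observe, as in the matrix formulation of Lemma~\ref{intvaldivlemma}, that the lower-triangular Pascal matrix $(L)_{ij}=\binom{i}{j}$ has determinant $1$ and hence integer inverse, so the linear system relating the vector of values $(f(0),\dots,f(N))$ to the vector of coefficients $(\alpha_0,\dots,\alpha_N)$ is invertible over $\Z$; this simultaneously yields integrality of the $\alpha_k$ and uniqueness of the expansion. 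Combining integrality and spanning with the linear independence above gives that $\left\{\binom{t}{k}\right\}_{k\geq 0}$ is a $\Z$-basis, which is the assertion.
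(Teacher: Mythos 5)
Your proof is correct. It runs in the same circle of ideas as the paper's (Newton's Interpolation Formula plus the unimodularity of the lower-triangular Pascal matrix), but the derivation is routed slightly differently. The paper offers no written proof beyond the remark that Lemma~\ref{intvaldivlemma} implies the result; the intended argument is to clear denominators, writing $f = g/d$ with $g \in \Z[x]$ and $d \in \Z_{\geq 1}$, expand $g(n) = \sum_k \alpha_k \binom{n}{k}$ with $\alpha_k = \Delta^k g(0) \in \Z$, observe that $d \mid g(n)$ for all $n$ because $f$ is integer-valued, and conclude $d \mid \alpha_k$ from the lemma, so that $f = \sum_k (\alpha_k/d)\binom{t}{k}$ is a $\Z$-combination. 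You correctly notice that the lemma, applied with $d=1$ directly to $f$, says nothing, and instead you argue integrality of the coefficients head-on: $\Delta^k f(0)$ is a $\Z$-linear combination of $f(0),\dots,f(k)$, hence an integer. This is a perfectly valid and arguably cleaner substitute --- it makes the proposition self-contained and independent of the divisibility lemma, whereas the paper's route reuses machinery it needs anyway for the later computations of $\Int(D[n])$ with $n<0$. Your remaining points (linear independence via distinct degrees, membership $\binom{t}{k} \in \Int(\Z[x])$ including negative arguments, and the observation that the Newton expansion is a genuine polynomial identity secured by the invertibility of the Pascal matrix over $\Z$) are all sound.
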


\begin{prop}\label{isomgrad}
For any $n\geq 0$, let $D[n] = \Z[x]$ and for $n< 0$, let $D[n]= P_{-n}(x)\Z[x]$. Consider the map $\psi_n : D[n]\to \End(\Z[x])$ where $f(t)\in D[n]$ is sent to the operator which acts on $x^t$ by sending it to $f(t)x^{t+n}$. There is an isomorphism of $\Z$-modules, $\psi : \bigoplus_{n\in \Z}D[n] \to \Z[x, \partial_x]$, 
where $\psi|_{D[n]} = \psi_n$ for all $n\in \Z$.
\end{prop}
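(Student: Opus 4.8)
The plan is to exploit the natural grading on $\Z[x,\partial_x]$ measuring how an operator shifts the exponent of a monomial, and to match each graded piece with the corresponding summand $D[n]$ through the standard monomial basis $x^a\partial_x^b$ of the Weyl algebra.

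First I would set up the grading. Since the faithful representation identifies $\Z[x,\partial_x]$ with $W(\Z)$, which has the standard $\Z$-basis $\{x^a\partial_x^b : a,b\geq 0\}$, and since $x$ raises the exponent by $1$ while $\partial_x$ lowers it by $1$, each monomial is homogeneous of degree $a-b$. A direct computation gives $x^a\partial_x^b\cdot x^t = P_b(t)\,x^{t+a-b}$, so $x^a\partial_x^b = \psi_{a-b}(P_b)$. Grouping the basis monomials by the value of $a-b$ exhibits a decomposition $\Z[x,\partial_x] = \bigoplus_{n\in\Z}\Z[x,\partial_x]_n$, where $\Z[x,\partial_x]_n$ is the free $\Z$-module on $\{x^a\partial_x^b : a-b=n\}$. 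The sum is direct because operators shifting the exponent by distinct amounts are linearly independent: evaluating $\sum_n T_n$ on $x^t$ yields terms $x^{t+n}$ living in distinct powers of $x$.

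Next I would identify $\Z[x,\partial_x]_n$ with $\psi_n(D[n])$ by checking that the coefficient polynomials occurring for fixed $n$ form a $\Z$-basis of $D[n]$. For $n\geq 0$ these are $\{P_b : b\geq 0\}$; each $P_b$ is monic of degree $b$, so by triangularity of the change of basis with $\{t^k\}$ they form a $\Z$-basis of $\Z[t]=D[n]$. For $n<0$ they are $\{P_{|n|+a} : a\geq 0\}$, and the factorization $P_{|n|+a}(t)=P_{|n|}(t)\cdot(t-|n|)(t-|n|-1)\cdots(t-|n|-a+1)$ writes each as $P_{|n|}$ times a monic polynomial of degree $a$, so they form a $\Z$-basis of $P_{|n|}(x)\Z[x]=D[n]$. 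In both cases $\psi_n$ carries the monomial basis of $\Z[x,\partial_x]_n$ onto a basis of $D[n]$. I would then record injectivity of $\psi_n$: if $\psi_n(f)=0$ then $f(t)=0$ for all sufficiently large $t$, so $f$ has infinitely many roots and vanishes (for $n<0$ the absent terms $x^{t+n}$ with $t<|n|$ cause no issue, as $f\in D[n]$ already vanishes at $t=0,\ldots,|n|-1$). Combining injectivity of each $\psi_n$ with the directness of the grading makes $\psi=\bigoplus_n\psi_n$ injective, and the basis-matching makes it surjective onto $\Z[x,\partial_x]$, giving the isomorphism of $\Z$-modules.

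I expect the step requiring the most care to be the identification of the negative-degree pieces, namely verifying that the coefficient polynomials $P_{|n|+a}$ generate exactly the submodule $P_{|n|}(x)\Z[x]$ and nothing larger. This is precisely what forces the asymmetric definition of $D[n]$ for $n<0$ and explains why the operators lowering degree are constrained; the remaining steps are bookkeeping with the Weyl algebra basis and the unipotent triangularity relating $\{t^k\}$ and $\{P_k\}$.
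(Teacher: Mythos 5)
Your proof is correct and follows essentially the same route as the paper: the same grading of $\Z[x,\partial_x]$ by the shift $a-b$, the same decomposition into graded pieces, and the same identification of each piece with $D[n]$ via the monomial basis $x^a\partial_x^b$. You are somewhat more explicit about the basis correspondence (the factorization $P_{|n|+a}=P_{|n|}\cdot(\text{monic of degree }a)$) and about injectivity, which the paper leaves implicit, but the underlying argument is the same.
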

\begin{proof}
Consider the $\Z$-grading on $\Z[x, \partial_x]$ given by $\partial_x \mapsto -1$ and $x\mapsto 1$. Let $P[n]$ be the set of homogeneous elements of degree $n$. Since $\{x^l\partial_x^k\}_{l,k\geq 0}$ is a basis for $\Z[x, \partial_x]$ as a $\Z$-module, we have an isomorphism $\Z[x, \partial_x] \cong \bigoplus_{n\in\Z}P[n]$. We claim that $\psi_n : D[n] \to P[n]$ is an isomorphism. First, note that $\Ima(\psi_n)\subset P[n]$. This is clear if $n\geq 0$. Indeed, let $f(x)\in D[n]=\Z[x]$ be some polynomial, say $f(x)=\sum^{d}_{i=0}\alpha_i x^i$. Then
\[\psi_n(f(x)) = x^n\sum^d_{i=0}\alpha_i (x\partial_x)^i \in P[n].\]
Similarly, if $n<0$, let $P_{-n}(x)f(x) \in D[n] = P_{-n}(x)\Z[x]$ be arbitrary, with $f(x)=\sum^d_{i=0}\alpha_i x^i$. Then $\psi_n(P_{-n}(x)f(x)) = \partial_x^{-n}\psi_0(f(x))\in P[n]$.

To show surjectivity, we consider the cases $n\geq 0$ and $n<0$ separately. If $n\geq 0$, this map is surjective, since $\psi_n(P_l(t)) = x^{l+n}\partial^l_x$, and $x^{l+n}\partial^l_x$ generate $P[n]$. If $n<0$, by the grading, every $Q\in P[n]$ can be expressed as $L\partial_x^{-n}$ for some $L\in P[0]$. So $\psi_n(\ell (x+n)P_{-n}(x)) = Q$ where $\ell(x)$ is the polynomial representing the action of $L$. Since $L\in P[0]$ is arbitrary, it follows that $\Ima(\psi_n) = \psi_n(P_{-n}(x)\Z[x]) = P[n]$. So for any $n\in \Z$, the map $\psi_n : D[n] \to P[n]$ is a surjection, hence an isomorphism. We have the desired isomorphism $\psi$ by the definition of direct sum.
\end{proof}

\begin{defn}
Let $R$ be an integral domain of characteristic zero, and suppose $A$ is a submodule of $\Fun(\Z, R)$, the $\Z$-module of set-theoretic functions from $\Z$ to $R$. The ring of $R$-valued elements of $A$ is
\[\Int_R(A) = \{ f/d : f\in A, d|f, d\in \Z_{\ge 1}\}.\]
where $d|f$ if $f/d\in \Fun(\Z,R)\subset \Fun(\Z,R\otimes \Q)$. Note that this agrees with our earlier definition of $\Int(\Z[x])$. We write $\Int(A)$ if $R=\Z$.
\end{defn}

\begin{prop}\label{D_Zspan}
We have an isomorphism of $\Z$-modules, $\Z[x, \partial_x]^\DP \cong \bigoplus_{n\in \Z} \Int(D[n])$. In particular, this implies that as a $\Z$-module, $\Z[x, \partial_x]^\DP$ is spanned by $x^k\mathcal D_x^l$ for all $k,l\geq 0$. Furthermore these are $\Z$-linearly independent.
\end{prop}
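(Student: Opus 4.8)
The plan is to exploit the $\Z$-grading on $\Z[x,\partial_x]$ from Proposition~\ref{isomgrad} and compute the divided power extension one graded piece at a time. Writing $\Z[x,\partial_x]\cong\bigoplus_{n\in\Z}P[n]$ as in that proposition, I would first check that the family $\{P[n]\}_{n\in\Z}$ satisfies the divisibility hypothesis of Proposition~\ref{directsumdividedpowers}: since a homogeneous component $a_n\in P[n]$ sends each basis vector $x^t$ to a scalar multiple of $x^{t+n}$, the images of distinct components land in distinct monomials, so $(\sum_n a_n)/d$ is integral exactly when each $a_n/d$ is. Granting this, Proposition~\ref{directsumdividedpowers} gives $\Z[x,\partial_x]^\DP=\bigoplus_{n\in\Z}P[n]^\DP$, and it remains to identify each $P[n]^\DP$.

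Next I would transport the computation of $P[n]^\DP$ back to the polynomial side along the isomorphism $\psi_n\colon D[n]\to P[n]$ of Proposition~\ref{isomgrad}, extended to an isomorphism $D[n]\otimes\Q\isom P[n]\otimes\Q$. Under $\psi_n$ an element $h\otimes(1/d)$ maps to the operator acting by $x^t\mapsto (h(t)/d)\,x^{t+n}$; this operator is integral precisely when $h(t)/d\in\Z$ for every $t$ at which the action is nonzero, i.e. $t\geq\max(0,-n)$. Here Lemma~\ref{intvaldivlemma} (after the shift $t\mapsto t-n$ in the case $n<0$) upgrades divisibility at all sufficiently large integers to divisibility at all integers, which is exactly the condition defining $\Int(D[n])$. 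This shows $\psi_n$ restricts to an isomorphism $\Int(D[n])\isom P[n]^\DP$, and hence $\Z[x,\partial_x]^\DP\cong\bigoplus_{n\in\Z}\Int(D[n])$, the first claim.

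For the explicit spanning set, I would compute a $\Z$-basis of each $\Int(D[n])$ in the binomial basis and push it forward by $\psi_n$, using the identity $\psi_n(\binom{t}{k})=x^{k+n}\mathcal D_x^k$, which follows from $\mathcal D_x^k x^t=\binom{t}{k}x^{t-k}$. For $n\geq 0$, Proposition~\ref{newtonfree} gives $\Int(D[n])=\Int(\Z[x])$ with basis $\{\binom{t}{k}\}_{k\geq 0}$, yielding the operators $x^{k+n}\mathcal D_x^k$, i.e. all $x^a\mathcal D_x^b$ with $a\geq b$. For $n=-m<0$ one has $D[n]\otimes\Q=P_m(x)\Q[x]$, the polynomials vanishing at $0,1,\dots,m-1$, so $\Int(D[n])$ consists of the integer-valued polynomials vanishing there; evaluating successively shows this is freely spanned by $\{\binom{t}{k}\}_{k\geq m}$, giving the operators $x^{k-m}\mathcal D_x^k$, i.e. all $x^a\mathcal D_x^b$ with $a<b$. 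The two families together are exactly $\{x^k\mathcal D_x^l\}_{k,l\geq 0}$. Linear independence then follows because operators $x^a\mathcal D_x^b$ with fixed $a-b=n$ are the images under $\psi_n$ of the linearly independent $\binom{t}{b}$, while operators of different degrees $n$ lie in different summands $P[n]$.

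I expect the main obstacle to be the negative-degree part. The factor $P_{-n}(x)$ in the definition of $D[n]$ is precisely what makes the operators well defined (it kills the basis vectors $x^t$ with $t+n<0$), and one must check carefully that the ``missing'' evaluation points $t=0,\dots,-n-1$ do not weaken the divisibility condition; this is where the shifted application of Lemma~\ref{intvaldivlemma} and the identification of $\Int(D[n])$ with integer-valued polynomials vanishing on an initial segment do the real work. The positive-degree part, by contrast, is essentially an immediate consequence of Newton's theorem (Proposition~\ref{newtonfree}).
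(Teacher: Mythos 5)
Your proposal is correct and follows essentially the same route as the paper: apply Proposition~\ref{directsumdividedpowers} to the grading $\bigoplus_n P[n]$, transport each $P[n]^\DP$ to $\Int(D[n])$ via $\psi_n$, and read off the basis $\{\binom{t}{k}\}$ (shifted to $k\geq -n$ in negative degrees) to obtain the operators $x^k\mathcal D_x^l$. The only cosmetic difference is that for $n<0$ you identify $\Int(D[n])$ as the integer-valued polynomials vanishing on $\{0,\dots,-n-1\}$ and compute Newton coefficients by successive evaluation, where the paper expands in the $P_j$ basis and invokes Lemma~\ref{intvaldivlemma} directly; these are the same argument.
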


\begin{proof}
To apply Proposition~\ref{directsumdividedpowers}, we must prove the divisibility condition. So suppose $d|\sum_{n\in \Z}Q_n$ where $\deg Q_n = n$. Then for all $t\geq 0$, $d|\left(\sum_{n\in\Z}Q_n\right)x^t = \sum_{n\in \Z}f_{Q_n}(t)x^{t+n}$. Therefore $d|f_{Q_n}(t)$ for all $t\geq 0$, and so $d|Q_n$. So by Proposition~\ref{directsumdividedpowers}, we have the equality $\bigoplus_{n\in \Z}P[n]^\DP = \Z[x, \partial_x]^\DP$. Note however that $d|Q\in P[n]$, if and only if $d|q(t) \in D[n]$ for all $t$, where $q(t)$ is the polynomial representing the action of $Q$ on $x^t$. So we have an isomorphism $\psi_n^\DP : \Int(D[n])\to P[n]^\DP$ for each $n$, defined similarly to $\psi_n$. Combining these, we get an isomorphism $\psi^\DP : \Z[x, \partial_x]^\DP \cong \bigoplus_{n\in\Z}\Int(D[n])$.

Next we claim that $\Z[x,\partial_x]^\DP$ is generated by $x^k\mathcal D_x^l$ for all $k,l\geq 0$ as a $\Z$-module. It suffices to consider $\Int(D[n])$, so first assume that $n\geq 0$. By Corollary~\ref{newtonfree}, $\Int(\Z[t])$ is generated by ${t\choose k} $. So the image of $\Int(D[n])$ in $\Z[x, \partial_x]^\DP$ is generated by $x^{n+k}\mathcal D_x^k$, since $x^{n+k}\mathcal D_x^kx^t = {t\choose k} x^{t+n}$. If $n<0$, note that by Lemma~\ref{intvaldivlemma}, $\Int(D[n]) = \Int(P_{-n}(t)\Z[t])$ is generated by ${t\choose k-n} $ for $k\geq 0$. This is because if $d| P_{-n}(t)\sum_{j\geq 0}\alpha_j P_j(t-n) = \sum_{j\geq 0}\alpha_j (j-n)! {t\choose j-n} $, then $d|\alpha_j(j-n)!$ for all $j$. This basis for $\Int(P_{-n}(t)\Z[t])$ corresponds to $x^{k}\mathcal D_x^{k-n}$, where $k\geq 0$. The $\Z$-linear independence follows from linear independence of ${t\choose k-n}$ in $\Z[t]$.
\end{proof}

\begin{cor}
$\Z[x, \partial_x]^\DP$ is a free $\Z[x]$-module, freely spanned by $\mathcal{D}^k_{x}$ for $k\geq 0$.
\end{cor}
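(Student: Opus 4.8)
The plan is to deduce this corollary directly from the preceding Proposition~\ref{D_Zspan}, which already establishes both a spanning set and $\Z$-linear independence for $\Z[x,\partial_x]^\DP$. The statement claims a \emph{stronger} structural fact — that $\Z[x,\partial_x]^\DP$ is free as a module over the larger ring $\Z[x]$, with the Hasse derivatives $\mathcal D_x^k$ for $k\geq 0$ as a free basis. So the work is to repackage the $\Z$-basis $\{x^k\mathcal D_x^l\}_{k,l\geq 0}$ into a $\Z[x]$-basis, and this should be a short organizational argument rather than any new computation.

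First I would recall from the proof of Proposition~\ref{D_Zspan} the key relation governing how $x$ interacts with the Hasse derivatives: one has $x^{n+k}\mathcal D_x^k x^t = \binom{t}{k}x^{t+n}$, and more to the point, $x\cdot \mathcal D_x^k$ is again a polynomial-differential operator in the span. The essential observation is that multiplying a Hasse derivative $\mathcal D_x^k$ on the left by a power $x^m$ gives exactly the operators $x^m\mathcal D_x^k$ appearing in the $\Z$-spanning set. Thus every element of the $\Z$-spanning set $\{x^m \mathcal D_x^k\}_{m,k\geq 0}$ can be written as $x^m \cdot \mathcal D_x^k$, i.e. as a $\Z[x]$-multiple of one of the $\mathcal D_x^k$. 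Since the $x^m \mathcal D_x^k$ span over $\Z$, it follows immediately that the $\mathcal D_x^k$ span $\Z[x,\partial_x]^\DP$ over $\Z[x]$. I would need to check that $\Z[x,\partial_x]^\DP$ is genuinely closed under left multiplication by $\Z[x]$, i.e. that it is a $\Z[x]$-module; this is clear since $x\in \Z[x,\partial_x]^\DP$ and $\Z[x,\partial_x]^\DP$ is a ring.

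For freeness, I would argue that the $\mathcal D_x^k$ are $\Z[x]$-linearly independent. Suppose $\sum_{k} p_k(x)\,\mathcal D_x^k = 0$ for polynomials $p_k(x)\in\Z[x]$, only finitely many nonzero. Writing each $p_k(x) = \sum_m c_{m,k}x^m$ and expanding, this becomes a $\Z$-linear relation $\sum_{m,k} c_{m,k}\, x^m\mathcal D_x^k = 0$ among the operators $x^m\mathcal D_x^k$. But Proposition~\ref{D_Zspan} asserts precisely that these operators are $\Z$-linearly independent, so every $c_{m,k}=0$, hence every $p_k(x)=0$. This gives the desired $\Z[x]$-linear independence, completing the proof that $\{\mathcal D_x^k\}_{k\geq 0}$ is a free $\Z[x]$-basis.

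The argument is essentially a bookkeeping translation between a $\Z$-basis indexed by pairs $(m,k)$ and a $\Z[x]$-basis indexed by $k$, with the power-of-$x$ coordinate absorbed into the scalar ring. I do not expect a serious obstacle here; the one point that requires care — and the closest thing to a genuine step — is confirming that the natural $\Z[x]$-action lines up correctly with the grading, namely that left multiplication by $x^m$ sends $\mathcal D_x^k$ to the operator $x^m\mathcal D_x^k$ living in the degree $m-k$ graded piece $P[m-k]^\DP$, so that no collapsing or unexpected coincidences among basis elements occur. Since distinct pairs $(m,k)$ give $\Z$-independent operators by the previous proposition, this is automatic, and the corollary follows.
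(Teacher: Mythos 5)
Your argument is correct and is exactly the intended deduction: the paper states this corollary without proof as an immediate consequence of Proposition~\ref{D_Zspan}, and your repackaging of the $\Z$-basis $\{x^m\mathcal D_x^k\}_{m,k\geq 0}$ into the $\Z[x]$-basis $\{\mathcal D_x^k\}_{k\geq 0}$ (spanning via $x^m\mathcal D_x^k = x^m\cdot\mathcal D_x^k$, independence by expanding a $\Z[x]$-relation into a $\Z$-relation) is the standard way to make that implication explicit.
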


\section{Maximal Divided Power Extensions of $\mathcal H_{t,c}(\fr S_2, \fr h)$}\label{maximaldivpowexts}
To apply the notion of divided powers to $\mathcal H_{t,c}(\fr S_2, \fr h)$, we must introduce an integral version of this algebra. Before we do this, we use the tensor decomposition given in Section~\ref{therationalcherednikalgebraoftypeA} to reduce the size of the algebra. Let $n=2$, and consider $\mathcal H_{t,c}(\fr S_2, \fr h)$.
This is a subalgebra of $\{x_1-x_2\}^{-1}\Diff(\C[x_1,x_2])\rtimes \C[\fr S_2]$ generated by $x_1,x_2, s_{12}$, 
\[D_1 = t\frac{\partial}{\partial x_1} - c\frac{1}{x_1-x_2}(1-s_{12})\quad\rm{ and }\quad D_2 = t\frac{\partial}{\partial x_2} + c\frac{1}{x_1-x_2}(1-s_{12}).\]
$\mathcal H_{t,c}(\fr S_2, \fr l)$ is the subalgebra of $\End(\C[x])$ generated by $x, s$ and $t\frac{\partial}{\partial x}-\frac{2c}{x}\frac{(1-s)}{2}$ where $sx = -xs, s^2=1,$ and $s\frac{\partial}{\partial x} = - \frac{\partial}{\partial x} s$. Here $x=x_1-x_2$ and $s=s_{12}$. Note that $\mathcal H_{t,c}(\fr S_2, \fr h) \cong \mathcal H_{t,c}(\fr S_2, \fr l) \otimes \C[q, \partial_q]$. By definition, $\mathcal H_{t,c}(\fr S_2, \fr h) \subset \End(\C[\fr h]) = \End(\C[\fr l])\otimes\End(\C[q])$, where $q$ is some formal variable. Since $\mathcal H_{t,c}(\fr S_2, \fr l)\subset \End(\C[\fr l])$ and $\C[q, \partial_q]\subset\End(\C[q])$, Proposition~\ref{tensorproductdividedpowers} implies that to study divided power extensions of $\mathcal H_{t,c}(\fr S_2, \fr h)$, it suffices to study divided power extensions of $\mathcal H_{t,c}(\fr S_2, \fr l)$ and $\C[q, \partial_q]$ separately. The conditions of Proposition\ref{tensorproductdividedpowers} are shown to be satisfied by the results of Section~\ref{polynomialdifferentialoperators} and Section~\ref{maximaldivpowexts}.  Since divided power extensions of $\C[q, \partial_q]$ are known (see Section~\ref{polynomialdifferentialoperators}), we only need to consider $\mathcal H_{t,c}(\fr S_2,\fr l)$.

Using the canonical isomorphism $\mathcal H_{t,c}(\fr S_2,\fr l) \to \mathcal H_{\lambda t,\lambda c}(\fr S_2, \fr l)$ for any $\lambda\in \C^\times$, we can normalize $t=0$ or $t=1$. In this paper, we only consider the case when $t=1$.
\begin{defn}
For any domain of characteristic zero $R$ and $c\in R$, let $H_{1,c}(R)$ be the subalgebra of $\End_R(R[x])$ generated by $\e_-, x$ and $D=\frac{\partial}{\partial x}-\frac{2c}{x}\e_-$. Note that $\e_+ = 1-\e_-$. In particular note that $H_{1,c}(\C) = \mathcal H_{1,c}(\fr S_2, \fr l)$.
\end{defn}

\subsection{Freeness of $H_{1,c}^\DP(R)$}\label{freeness}
In this section, we prove the following theorem:
\begin{thm}\label{freenessofdividedpowerextensiontheorem}
Let $R$ be a PID of characteristic zero. Then for any $c\in R$, $H^\DP_{1,c}(R)$ is a free $R$-module.
\end{thm}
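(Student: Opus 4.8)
The plan is to imitate the analysis of $\Z[x,\partial_x]^\DP$ carried out in Proposition~\ref{D_Zspan}, adapting it to the present parity-twisted and $c$-twisted setting. First I would put a $\Z$-grading on $H_{1,c}(R)\subset\End_R(R[x])$ by declaring $x$ to have degree $+1$, the operator $D$ degree $-1$, and $\e_-$ degree $0$; since $D=\partial_x-\frac{2c}{x}\e_-$ genuinely lowers degree by one, this is well defined and $H_{1,c}(R)=\bigoplus_{n\in\Z}H[n]$, where $H[n]$ is the span of homogeneous operators of degree $n$. Each such operator acts by $x^t\mapsto f(t)\,x^{t+n}$ for a function $f\colon\Z_{\ge 0}\to R$, and recording $Q\mapsto f$ embeds $H[n]$ into $\Fun(\Z_{\ge0},R)$. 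Because $R[x]$ is itself graded and operators of distinct degrees land in distinct graded pieces, an operator $Q=\sum_n Q_n$ satisfies $d\mid Q$ iff $d\mid f_n(t)$ in $R$ for every $t$ and every $n$, i.e. iff $d\mid Q_n$ for all $n$. This is exactly the divisibility hypothesis of Proposition~\ref{directsumdividedpowers}, so $H_{1,c}^\DP(R)=\bigoplus_n H[n]^\DP$ with $H[n]^\DP\cong\Int_R(D_c[n])$, where $D_c[n]\subset\Fun(\Z_{\ge0},R)$ is the image of $H[n]$. It then suffices to prove each $\Int_R(D_c[n])$ is a free $R$-module.

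The next step is to pin down $D_c[n]$. Evaluating the generators gives $D\,x^t=c_t\,x^{t-1}$ with $c_m=m-2c\,[m\text{ odd}]$, so every operator in $H[n]$ sends $x^t$ to $f(t)x^{t+n}$ with $f$ a \emph{parity-polynomial}: it agrees with one polynomial in $t$ on even $t$ and with another on odd $t$. For $n\ge 0$ every parity-polynomial arises (using $x^n$ together with the degree-zero operators generated by $1$, $xD$ and $\e_-$), while for $n=-k<0$ the function must vanish for $t<k$ and carries the twisted factor $\pi_k(t)=\prod_{j=0}^{k-1}c_{t-j}$, the $c$-deformed analogue of the factor $P_{-n}(t)$ in Proposition~\ref{D_Zspan}. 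For freeness alone, however, the precise shape of $D_c[n]$ is irrelevant: all that is needed is that every element of $H[n]^\DP$ is represented by an $R$-valued function that is a polynomial of some finite degree on each parity class.

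Finally I would establish freeness of such a submodule by a degree filtration. Set $F_m=\{f\in\Int_R(D_c[n]):\deg f\le m\}$, an increasing exhaustion of $\Int_R(D_c[n])$. Over $\Frac(R)$ the parity-polynomials of degree $\le m$ span a finite-dimensional space, so $F_m$ has finite rank; and by the Newton interpolation argument of Lemma~\ref{intvaldivlemma} (valid over any $R$ since the lower-triangular Pascal matrix is invertible over $\Z$), an $R$-valued parity-polynomial of degree $\le m$ has denominators bounded by a fixed integer depending only on $m$. Hence $F_m$ lies in a finitely generated submodule of $D_c[n]\otimes\Q$, so by Noetherianity $F_m$ is finitely generated, and being torsion-free over the PID $R$ it is free. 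Moreover $F_{m+1}/F_m$ is torsion-free: if $rf\in F_m$ with $f\in F_{m+1}$ and $r\ne 0$, then since $R$ is a domain $rf$ has the same degree as $f$, forcing $\deg f\le m$. Thus each $F_{m+1}/F_m$ is finitely generated and torsion-free, hence free, so $F_m$ is a direct summand of $F_{m+1}$; concatenating lifted bases of the successive quotients yields a basis of $\Int_R(D_c[n])=\bigcup_m F_m$. Therefore every $H[n]^\DP$ is free, and $H_{1,c}^\DP(R)=\bigoplus_n H[n]^\DP$ is free as a direct sum of free modules.

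The step I expect to be the main obstacle is making the identification of $D_c[n]$ and the divisibility bookkeeping for $n<0$ fully rigorous: the factor $\pi_k(t)$ can acquire extra content for special values of $c$ (for instance when $2c\in\Z$, where some $c_m$ vanish), and an explicit basis genuinely depends on $c$. The degree-filtration argument above is designed precisely to avoid computing such a basis — which is the content of Theorem~\ref{basistheorem} over $R[c]$ — and to extract freeness from only the coarse facts that the representing functions are parity-polynomials with denominators bounded in each degree.
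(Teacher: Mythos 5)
Your proposal is correct and follows the paper's strategy in all essential respects: grade $H_{1,c}(R)$ by $x\mapsto 1$, $D\mapsto -1$, identify each graded piece with a module of functions on exponents via $x^t\mapsto f(t)x^{t+n}$, verify the divisibility hypothesis of Proposition~\ref{directsumdividedpowers} to get $H_{1,c}^\DP(R)\cong\bigoplus_n \Int_R(\cdot)$, and reduce to freeness of each summand. Your ``parity-polynomial'' bookkeeping is just a repackaging of the paper's splitting $P[n]=P[n]\e_+\oplus P[n]\e_-$ into the two modules $H^\pm[n]$. The one genuine divergence is the last step: the paper simply observes that each $\Int_R(H^\pm[n])$ sits inside a free $R$-module and invokes the fact that over a PID every submodule of a free module is free (which, for the not-necessarily-finitely-generated modules at hand, is the general Kaplansky-type theorem), whereas you build the freeness by hand through the degree filtration $F_m$, using Lemma~\ref{intvaldivlemma} to bound denominators so that each $F_m$ is finitely generated, and splitting the torsion-free quotients $F_{m+1}/F_m$. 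Your route is longer but more self-contained: it needs only the finitely generated structure theory over a PID, and it deliberately avoids pinning down the exact shape of the negatively graded pieces (the factors $D^\pm_{-n}(t)$ and their behaviour at special $c$), which the paper also does not need for this theorem but does compute later for Theorem~\ref{basistheorem}.
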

\begin{proof}
For now, let $R$ be an arbitrary domain of characteristic zero. For any $k\geq 0$, consider the polynomials $D_k^+(t) = \prod^{k-1}_{i=0}(2t-i-2cp_i)$ and $D_k^-(t)=\prod^{k-1}_{i=0}(2t+1-i-2cp_{i+1})$ where $p_i=0$ if $i$ is even and $1$ otherwise. Note that $D^k\e_+x^{2n}=D^+_k(n)x^{2n-k}$ and $D^k\e_-x^{2n+1}=D^-_k(n)x^{2n+1-k}$. Now consider the $R$-modules
\[H^+[n] = \begin{cases}R[2t]& n\geq 0 \\ D^+_{-n}(t)R[2t] & n < 0\end{cases}\quad\rm{ and }\quad H^-[n] = \begin{cases}R[2t+1]& n\geq 0 \\ D^-_{-n}(t)\Z[2t+1,\ell] & n < 0\end{cases}.\]
\textit{Note:} We are aware that $R[2t+1] = R[2t]$; this distinction is purely to motivate the connection between these sets and $H_{1,c}(R)$.

We have a $\Z$-grading on $H_{1,c}(R)$, given by $D\mapsto -1$, $x\mapsto 1$ and $s\mapsto 0$. By the PBW theorem, there is an isomorphism $H_{1,c}(R) \to \bigoplus_{n\in\Z} P[n]$ where $P[n]$ is the module of homogeneous elements of $H_{1,c}(R)$ of degree $n$. For all $Q\in H_{1,c}(R)$, we have $Q=Q\e_++Q\e_-$ and $H_{1,c}(R)\e_+\cap H_{1,c}(R)\e_- = \{0\}$, so it follows that $P[n]=P[n]\e_+\oplus P[n]\e_-$. We claim that $\psi_n^+ : H^+[n] \to P[n]\e_+$ and $\psi_n^- : H^-[n] \to P[n]\e_-$ are isomorphisms, where $\psi_n^\pm$ sends $f(t)$ to the operator which maps $x^t$ to $\e_\pm f(t) x^{n+t}$. Note that this operator acts by zero on odd powers of $x$ in the $\e_+$ case, and by zero on even powers of $x$ in the $\e_-$ case. $\Ima(\psi_n^\pm)\subset P[n]\e_\pm$ and the surjectivity of these maps follows by a similar argument to the proof of Proposition~\ref{isomgrad}, and from the fact that $Q\in P[n]\e_\pm$ for $n<0$ implies that $Q=LD^{-n}\e_\pm$ for some $L\in P[0]$. Combining these maps gives an isomorphism of $R$-modules:
\[\psi : \bigoplus_{n\in\Z}\left(H^+[n]\oplus H^-[n]\right) \isom H_{1,c}(R).\]
We can consider this direct sum as a subring of $\End_{R}(R[x])$ given by the action of $H^+[n]\oplus H^-[n]$ on $x^n$, defined by $(f^+,f^-)x^{2t} = f^+(t)x^{2t+n}$ and $(f^+,f^-)x^{2t+1}=f^-(t)x^{2t+1+n}$. Note that by Proposition~\ref{directsumdividedpowers} and the argument used in Proposition~\ref{D_Zspan}, there is an induced isomorphism:
\[\psi^\DP : \bigoplus_{n\in\Z}\left(\Int_{R}(H^+[n])\oplus \Int_{R}(H^-[n])\right) \isom H_{1,c}^\DP(R) \ . \]
So to understand $H_{1,c}^\DP(R)$, it suffices to understand $\Int_R(H^\pm[n])$. By assumption, $R$ is a PID. Since $\Int_R(H^\pm[n])$ is a submodule of a free module, $R[t]$, it is free. By the isomorphism $\psi^\DP$, it follows that $H_{1,c}^\DP(R)$ is free as well.
\end{proof}

\begin{prop}\label{basistheoremforpid}
Let $R$ be a PID, and fix some $c\in R$. Then there exist coefficients $\alpha^\pm_{i,j,k}\in R$ and integers $d^\pm_{i,j} \in \Z_{\geq 1}$ yielding operators
\[\Delta^\pm_{k_1,k_2} = \begin{cases}\dfrac{D^{k_1}\sum^{k_2-1}_{i=0}\alpha_{i,k_1,k_2}^\pm(L^\pm)^i}{d^\pm_{k_1,k_2}}\e_\pm &\text{ if }k_1>0,\\
\dfrac{\prod_{i=0}^{k_2-1}(L^\pm-2i)}{2^{k_2}k_2!}\e_\pm &\text{ if }k_1=0,\end{cases}\]
where $L^+=xD$ and $L^-=xD+2c-1$. Then, the set $\{\Delta^\pm_{n,k}, x^{n+1}\Delta_{0,k}^\pm\}_{n,k\geq 0}$ is a basis for $H_{1,c}^\DP(R)$.
\end{prop}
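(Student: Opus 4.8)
The plan is to leverage the graded decomposition established in the proof of Theorem~\ref{freenessofdividedpowerextensiontheorem}, namely the isomorphism
\[\psi^\DP : \bigoplus_{n\in\Z}\bigl(\Int_R(H^+[n])\oplus \Int_R(H^-[n])\bigr) \isom H_{1,c}^\DP(R),\]
and to exhibit an explicit $R$-basis of each graded summand $\Int_R(H^\pm[n])$, reading off the corresponding operator through $\psi^\DP$. Because this is a direct sum, the $R$-linear independence and spanning of the global set follow once each summand is handled, so the problem reduces to three regimes of the grading: $n=0$, $n>0$, and $n<0$. First I would set up the dictionary between polynomials in $t$ and operators. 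A short computation using $Dx^{2t}=D^+_1(t)x^{2t-1}$ and $Dx^{2t+1}=D^-_1(t)x^{2t}$ shows that $L^+=xD$ acts on $x^{2t}$ by multiplication by $2t$ and $L^-=xD+2c-1$ acts on $x^{2t+1}$ by multiplication by $2t$; hence any diagonal action $p(2t)$ on even (resp.\ odd) powers is realized by $p(L^+)\e_+$ (resp.\ $p(L^-)\e_-$). Raising the degree by $m>0$ is realized by left multiplication by $x^m$, and lowering by $n>0$ by left multiplication by $D^n$, which on even powers multiplies by the fixed factor $D^+_n(t)$ (and by $D^-_n(t)$ on odd powers). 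This is precisely the content of the piecewise formula defining $\Delta^\pm_{k_1,k_2}$.

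For the non-negative regime I would use that $H^\pm[n]=R[2t]$ for all $n\ge 0$, and show $\Int_R(R[2t])=\Int_R(R[t])$. Each $\binom{t}{k}=\tfrac{\prod_{i=0}^{k-1}(2t-2i)}{2^k k!}$ lies in $\Int_R(R[2t])$, and by Newton's result (Proposition~\ref{newtonfree}) together with the Pascal-matrix argument of Lemma~\ref{intvaldivlemma}, which is valid over any PID since $\det L=1$, the $\binom{t}{k}$ freely generate $\Int_R(R[t])$. Transporting the basis $\{\binom{t}{k}\}_{k\ge0}$ through $\psi^\DP$ yields $\Delta^\pm_{0,k}=\tfrac{\prod_{i=0}^{k-1}(L^\pm-2i)}{2^k k!}\e_\pm$ in degree $0$, and, after left multiplication by the appropriate power of $x$, the operators $x^{n+1}\Delta^\pm_{0,k}$ in each positive degree $n+1$.

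The heart of the argument, and the main obstacle, is the negative regime: for $n>0$ I must produce a basis of $\Int_R(D^\pm_n(t)R[2t])$ of the prescribed shape $\tfrac{D^n\,P(L^\pm)}{d}\e_\pm$. Here the fixed factor $D^\pm_n(t)=\prod_i(2t-i-2cp_i)$ contributes extra divisibility at integer arguments beyond what an arbitrary cofactor would, and this surplus depends on $c$ and on the parity-dependent shifts $2cp_i$, so the integer Weyl-algebra computation of Proposition~\ref{D_Zspan} does not transfer verbatim. My approach is to treat $\Int_R(D^\pm_n(t)R[2t])$ as a submodule of $R[t]$ filtered by degree; since $R$ is a PID and this module contains an element of every sufficiently large degree (for instance $D^\pm_n(t)(2t)^k$), a triangulation with respect to the degree filtration yields a basis with strictly increasing degrees, one basis element per available degree. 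It then remains to show that each such element can be taken in the stated form, namely $D^\pm_n(t)$ times a polynomial in $2t$ of the index-appropriate degree, divided by a single integer $d^\pm_{n,k}$; this is exactly the step that forces one to quantify the content of $D^\pm_n(t)\cdot(2t)^i$ at integer arguments and thereby solve for the coefficients $\alpha^\pm_{i,n,k}$ and the denominator $d^\pm_{n,k}$ (the proposition only asserts their existence, which the PID hypothesis guarantees). Translating these polynomials back through $\psi^\DP$ produces the operators $\Delta^\pm_{n,k}$, and assembling the bases of all three regimes over every $n\in\Z$ completes the proof.
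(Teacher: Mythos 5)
Your proposal is correct and follows essentially the same route as the paper: reduce via the graded isomorphism $\psi^\DP$ to the summands $\Int_R(H^\pm[n])$, use the binomial basis $\{\binom{t}{k}\}$ (via Newton interpolation and the Pascal-matrix argument of Lemma~\ref{intvaldivlemma}) for $n\geq 0$, and for $n<0$ produce a triangular basis of $\Int_R(D^\pm_{-n}(t)R[2t])$ with one element per degree. The only difference is one of detail: where the paper simply asserts the existence of the basis in the negative regime, you spell out the degree-filtration/leading-coefficient argument over a PID that justifies it, which is a welcome elaboration rather than a different method.
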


\begin{proof}
To obtain this basis for $H_{1,c}^\DP(R)$, we first construct a basis for $\bigoplus_{n\in \Z}(\Int_R(H^+[n])\oplus \Int_R(H^-[n]))$. First suppose $n\geq 0$. In this case, $H^\pm[n]=R[2t]$, and so the binomial coefficients $\{{t\choose k}\}_{k\geq 0}$ form a basis for $\Int_R(H^\pm[n])$. Since for every $k\geq 0$, $\restr{\psi^\DP}{\Int_R(H^\pm[n])}\left({t\choose k}\right) = x^n\Delta^\pm_{0,k}$, it follows that $\{x^{n}\Delta_{0,k}^\pm\}_{n,k\geq 0}$ spans the set of non-negatively graded operators in $H_{1,c}^\DP(R)$. 

Now suppose $n<0$. It follows that $\Int_R(H^\pm[n])$ has a basis of the form $\left\{\frac{1}{d^\pm_{-n, k}}D^\pm_{-n}(t)\sum^{k-1}_{i=0}\alpha_{i,-n,k}^\pm (2t)^i\right\}_{k\geq 0}$. Since
\[\restr{\psi^\DP}{\Int_R(H^\pm[n])}\left(\frac{1}{d^\pm_{-n,k}}D^\pm_{-n}(t)\sum^{k-1}_{i=0}\alpha^\pm_{i,-n,k}(2t)^i\right) = \Delta^\pm_{-n,k}.\]
It follows that $\{\Delta_{n,k}^\pm\}_{n,k\geq 0}$ spans the set of negatively graded operators in $H_{1,c}^\DP(R)$, completing the proof. 
\end{proof} 

\subsection{Basis for $H_{1,c}^\DP(R[c])$}
In this section, we prove a similar result for $H_{1,c}^\DP(R[c])$. In this case, we can even construct an explicit basis for $H_{1,c}^\DP(R[c])$ as an $R[c]$-module.

\begin{thm}\label{basistheorem}
For any integers $k_1,k_2\geq 0$, consider the operators
\begin{itemize}
    \item $\displaystyle\Delta^+_{k_1,k_2} = \frac{D^{k_1}\prod^{k_2-1}_{i=0}(xD-2(i+m_1(k_1)))}{2^{m_1(k_1)+k_2} (m_1(k_1)+k_2)!}\e_+,$
    \item $\displaystyle \Delta^-_{k_1,k_2} = \frac{D^{k_1}\prod^{k_2-1}_{i=0}(xD+2c-1-2(i+m_0(k_1)))}{2^{m_0(k_1)+k_2} (m_0(k_1)+k_2)!}\e_-,$
\end{itemize}
where $m_\delta(k_1) = \floor*{\frac{k_1+\delta}{2}}$ for $\delta=0,1$. Then the set $\{\Delta_{n,k}^\pm, x^{n+1}\Delta_{0,k}^\pm\}_{n,k\geq 0}$ is an $R[c]$-basis for $H_{1,c}^\DP(R[c])$.
\end{thm}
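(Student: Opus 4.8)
\section*{Proof proposal}

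The plan is to exploit the graded isomorphism $\psi^\DP$ built in the proof of Theorem~\ref{freenessofdividedpowerextensiontheorem}: that construction only used that the base ring is a domain of characteristic zero, so it applies verbatim over $R[c]$ and gives
\[\psi^\DP : \bigoplus_{n\in\Z}\left(\Int_{R[c]}(H^+[n])\oplus \Int_{R[c]}(H^-[n])\right) \isom H_{1,c}^\DP(R[c]).\]
Thus it suffices to produce an explicit $R[c]$-basis of each graded summand and transport it through $\psi^\DP$. First I would compute the action of the candidate operators directly: using $xD\cdot x^{2t}=2t\,x^{2t}$, $(xD+2c-1)x^{2t+1}=2t\,x^{2t+1}$, and $D^{k_1}\e_\pm x^{\bullet}=D^\pm_{k_1}(t)x^{\bullet-k_1}$, a bookkeeping computation in which all powers of $2$ cancel against the denominators $2^{m_\delta(k_1)+k_2}(m_\delta(k_1)+k_2)!$ should yield
\[\Delta^+_{m,k}\,x^{2t}=\binom{t}{m_1(m)+k}G^+(t)\,x^{2t-m},\qquad \Delta^-_{m,k}\,x^{2t+1}=\binom{t}{m_0(m)+k}G^-(t)\,x^{2t+1-m},\]
where $G^+(t)=\prod_{j=0}^{m_0(m)-1}(2t-2j-1-2c)$ and $G^-(t)=\prod_{j=0}^{m_1(m)-1}(2t+1-2j-2c)$ collect exactly the $c$-dependent factors of $D^\pm_m$. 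In this way $\Delta^\pm_{m,k}$ is identified with $\psi^\DP$ of the polynomial $\binom{t}{m_\bullet(m)+k}G^\pm(t)\in\Int_{R[c]}(H^\pm[-m])$.

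For the non-negatively graded part ($n\geq 0$) one has $H^\pm[n]=R[c][t]$, so $\Int_{R[c]}(H^\pm[n])$ is the module of $R[c]$-valued polynomials, which by Newton's interpolation formula is freely generated over $R[c]$ by $\{\binom{t}{k}\}_{k\geq 0}$. These correspond under $\psi^\DP$ to $\Delta^\pm_{0,k}$ in degree $0$ and to $x^{n+1}\Delta^\pm_{0,k}$ in degree $n+1\geq 1$, which exhausts the non-negatively graded basis vectors.

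The substance is the negatively graded part. Writing $n=-m$ with $m\geq 1$, I would split $D^+_m(t)=2^{m_1(m)}P_{m_1(m)}(t)\,G^+(t)$ (and symmetrically for the minus sign) by separating the even-index factors $2t-2j$ from the odd-index factors $2t-2j-1-2c$. Since $\Int_{R[c]}(\lambda B)=\Int_{R[c]}(B)$ for any $\lambda\in\Z_{\ge1}$ and any $R[c]$-submodule $B$, the power of $2$ is irrelevant, and everything reduces to the key identity (with $a=m_1(m)$, resp.\ $m_0(m)$, and $G=G^\pm$)
\[\Int_{R[c]}\!\big(P_{a}(t)G(t)R[c][t]\big) = G(t)\cdot\Int_{R[c]}\!\big(P_{a}(t)R[c][t]\big).\]
Granting this, the right-hand module is freely generated by $\{\binom{t}{a+k}\}_{k\geq 0}$ — the $R[c]$-analogue of the negative case of Proposition~\ref{D_Zspan}, proved by writing an $R[c]$-valued, $P_a$-divisible polynomial in the basis $\binom{t}{k}$ and observing that divisibility by $P_a$ forces the coefficients of $\binom{t}{0},\dots,\binom{t}{a-1}$ to vanish. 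Multiplying by $G$ (an injective $R[c]$-linear map) sends this to the basis $\{\binom{t}{a+k}G(t)\}_{k\geq 0}$, matching $\{\Delta^\pm_{m,k}\}_k$ by the computation above.

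The hard part is the displayed identity, whose nontrivial inclusion is the divisibility lemma: \emph{if $\phi\in(R\otimes\Q)[c][t]$ is $R[c]$-valued and $G(t)\mid\phi(t)$ in $(R\otimes\Q)[c][t]$, then $\phi/G$ is again $R[c]$-valued}. I would prove this by induction, peeling off one factor $g(t)=2t+\beta-2c$ with $\beta$ odd at a time, so it reduces to a single factor. Expanding $\phi=\sum_\ell\phi_\ell(t)c^\ell$ and $\chi=\phi/g=\sum_\ell\chi_\ell(t)c^\ell$, the equation $\phi=(2t+\beta-2c)\chi$ gives the bidiagonal relations $\phi_\ell=(2t+\beta)\chi_\ell-2\chi_{\ell-1}$; here each $\phi_\ell$ is $R$-valued and one must show each $\chi_\ell$ is. Fixing $t=n$ and setting $w=2n+\beta$ (odd, hence coprime to $2$), solving the finite system forward shows $\chi_\ell(n)\in\tfrac{1}{w^{\ell+1}}R$, while solving it backward from $2\chi_D(n)=-\phi_{D+1}(n)$ shows $\chi_\ell(n)\in\tfrac{1}{2^{D-\ell+1}}R$; since $\gcd(w,2)=1$ forces $\tfrac{1}{w^A}R\cap\tfrac{1}{2^B}R=R$, we conclude $\chi_\ell(n)\in R$. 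This clash between an odd-denominator and a power-of-two-denominator description of the same quotient — which is exactly why the constant terms $2j+1$ of the factors of $G$ must be odd — is the crux of the whole theorem; the minus case is identical after interchanging $m_1(m)$ and $m_0(m)$.
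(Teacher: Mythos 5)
Your proposal is correct and follows essentially the same route as the paper's proof: the same graded decomposition via $\psi^\DP$, the same identification of $\Delta^\pm_{m,k}$ with $G^\pm(t)\binom{t}{m_\bullet(m)+k}$, and the same reduction of the negatively graded case to stripping the $c$-dependent factor off the Dunkl polynomial $D^\pm_m$. The only divergence is local: where the paper dispatches that stripping step in one line by noting that $L^\pm_{-n}(t)$ is primitive (a content/Gauss's-lemma argument resting on exactly the odd-constant-term-versus-power-of-two observation you isolate), you prove the same divisibility by an explicit forward/backward solve of the bidiagonal system in the $c$-coefficients.
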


Recall in the proof of Theorem~\ref{freenessofdividedpowerextensiontheorem}, we proved that $H_{1,c}^\DP(R)$ is isomorphic to the direct sum $\bigoplus_{n\in\Z}\left(\Int_{R}(H^+[n])\oplus \Int_{R}(H^-[n])\right)$ for any domain $R$. To prove Theorem~\ref{basistheorem}, we make use of this fact by constructing a basis for $\Int_R(H^\pm[n])$.

\begin{prop}
$(\psi^\DP)^{-1}\left(\{\Delta_{n,k}^\pm, x^{n+1}\Delta_{0,k}^\pm\}_{n,k\geq 0}\right)$ is a basis for $\bigoplus_{n\in\Z}\left(\Int_{R[c]}(H^+[n])\oplus \Int_{R[c]}(H^-[n])\right)$ as an $R[c]$-module.
\end{prop}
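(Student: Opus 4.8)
The plan is to exploit the grading-preserving isomorphism $\psi^\DP$ from the proof of Theorem~\ref{freenessofdividedpowerextensiontheorem}, which was established for an \emph{arbitrary} domain of characteristic zero and therefore applies verbatim with base ring $R[c]$. Since the target $\bigoplus_{n\in\Z}\left(\Int_{R[c]}(H^+[n])\oplus\Int_{R[c]}(H^-[n])\right)$ is a direct sum over the degree $n$ and the sign $\pm$, and each proposed operator is homogeneous of a definite degree and sign, it suffices to show that for each fixed $n$ and each sign the preimages of the operators landing in that summand form an $R[c]$-basis of the corresponding $\Int_{R[c]}(H^\pm[n])$. I would split the argument according to the sign of $n$. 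First I would record the $R[c]$-version of Lemma~\ref{intvaldivlemma}: its proof is unchanged since the Pascal matrix $L$ is unipotent with integer entries ($\det L=1$), so any $f=\sum_k\alpha_k\binom{t}{k}$ with $\alpha_k\in R[c]\otimes\Q$ that is $R[c]$-valued on $\Z$ has all $\alpha_k\in R[c]$.

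For $n\geq 0$ one has $H^\pm[n]=R[c][t]$, so this generalized lemma shows $\Int_{R[c]}(H^\pm[n])$ is free with basis $\{\binom{t}{k}\}$. A direct computation gives that $L^+=xD$ acts on $x^{2t}$ by $2t$ and $L^-=xD+2c-1$ acts on $x^{2t+1}$ by $2t$, whence $(\psi^\DP)^{-1}(\Delta_{0,k}^\pm)=\binom{t}{k}$ in degree $0$ and $(\psi^\DP)^{-1}(x^{n+1}\Delta_{0,k}^\pm)=\binom{t}{k}$ in degree $n+1$; together these exhaust all non-negative graded pieces, and the basis property follows from the lemma.

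The substance is the case $n=-k_1<0$. Factoring $D_{k_1}^+(t)=2^{m_1}P_{m_1}(t)\,O^+(t)$, where $P_{m_1}(t)=t(t-1)\cdots(t-m_1+1)$ collects the even factors $2t-2j$ and $O^+(t)=\prod_{j=0}^{m_0-1}(2(t-j)-1-2c)$ collects the odd factors (with $m_1=m_1(k_1)$ and $m_0=m_0(k_1)$), the telescoping identity $P_{m_1}(t)\prod_{i=0}^{k-1}(t-i-m_1)=P_{m_1+k}(t)$ yields
\[(\psi^\DP)^{-1}(\Delta_{k_1,k}^+)=g_k^+(t):=\binom{t}{m_1+k}\,O^+(t),\]
and symmetrically $(\psi^\DP)^{-1}(\Delta_{k_1,k}^-)=\binom{t}{m_0+k}\,O^-(t)$ with $O^-(t)=\prod_{j=0}^{m_1-1}(2(t-j)+1-2c)$. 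Membership $g_k^\pm\in\Int_{R[c]}(H^\pm[-k_1])$ is routine: on integers the values lie in $\Z[c]\subseteq R[c]$, and $g_k^\pm/D_{k_1}^\pm\in(R[c]\otimes\Q)[t]$ via $\binom{t}{m_1+k}=\binom{t}{m_1}\binom{t-m_1}{k}/\binom{m_1+k}{m_1}$. Linear independence over $R[c]$ is immediate from the distinct $t$-degrees $k_1+k$, and one checks $\Span_{(R\otimes\Q)[c]}\{g_k^+\}=H^+[-k_1]\otimes\Q$, so every $h\in\Int_{R[c]}(H^+[-k_1])$ can be written $h=\sum_k\beta_k g_k^+=O^+(t)\,v(t)$ with $\beta_k\in(R\otimes\Q)[c]$ and $v=\sum_k\beta_k\binom{t}{m_1+k}$.

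Spanning is the crux, and it reduces to showing that dividing by the factor $O^\pm$ preserves $R[c]$-integrality: I must deduce $v(m)\in R[c]$ for all $m$ from $O^+(m)v(m)=h(m)\in R[c]$ for all $m$, after which the generalized lemma forces $\beta_k\in R[c]$. The key observation is that for each integer $m$ the polynomial $O^+(m)=\prod_{j=0}^{m_0-1}(2(m-j)-1-2c)\in\Z[c]$ is \emph{primitive} in $c$: its constant term $\prod_j(2(m-j)-1)$ is odd while its leading coefficient is $\pm 2^{m_0}$, and these are coprime in $\Z$. Hence for every $N\in\Z_{\geq1}$ the reduction of $O^+(m)$ is a non-zero-divisor in $(R/NR)[c]$, since no nonzero element of $R/NR$ can annihilate both an odd integer and a power of $2$. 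Choosing $N$ so that $Nv$ is already $R[c]$-valued, the relation $O^+(m)v(m)\in R[c]$ says exactly that $\overline{O^+(m)}$ kills the class of $v(m)$ in the $N$-torsion module $(\tfrac1N R[c])/R[c]\cong(R/NR)[c]$; the non-zero-divisor property forces that class to vanish, i.e. $v(m)\in R[c]$ for every $m$. This is the only point where the special arithmetic of the Dunkl factors is used, and the identical argument with $O^-$ settles the $-$ case, completing the proof.
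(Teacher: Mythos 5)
Your proposal is correct and follows essentially the same route as the paper: reduce to the graded summands $\Int_{R[c]}(H^\pm[n])$, identify the preimages of the $\Delta^\pm_{k_1,k}$ as $\binom{t}{m+k}$ times the product of the $c$-dependent Dunkl factors, and combine the $R[c]$-version of Lemma~\ref{intvaldivlemma} with the primitivity of that $c$-part to extract the basis. The only real difference is that where the paper cancels the primitive factor $L^+_{-n}$ in a single line, you justify the cancellation explicitly via the non-zero-divisor argument in $(R/NR)[c]$ --- extra care rather than a new idea.
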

\begin{proof}
For any $k\geq 0$, consider the polynomials
\[L_k^+(t) = \prod^{m_0(k)-1}_{i=0}(2t-2i-1-2c), \quad\text{ and }\quad L_k^-(t)=\prod^{m_1(k)-1}_{i=0}(2t-2i+1-2c).\]
Borrowing notation from the proof of Theorem~\ref{freenessofdividedpowerextensiontheorem}, note that $D_k^+(t)=2^{m_1(k)}m_1(k)!L_k^+(t){{t}\choose {m_1(k)}}$ and $D_k^-(t)=2^{m_0(k)}m_0(k)!L_k^-(t){{t}\choose {m_0(k)}}$. Note that the statement of the Proposition is equivalent to the following four statements:
\begin{enumerate}
    \item The set $\left\{t\choose k\right\}_{k\geq 0}$ is an $R[c]$-basis for $\Int_{R[c]}(H^+[n])$ for $n\geq 0$.
    \item The set $\left\{L^+_{-n}{t\choose{k+m_1(-n)}}\right\}_{k\geq 0}$ is an $R[c]$-basis for $\Int_{R[c]}(H^+[n])$ for $n<0$.
    \item The set $\left\{t\choose k\right\}_{k\geq 0}$ is an $R[c]$-basis for $\Int_{R[c]}(H^-[n])$ for $n\geq 0$.
    \item The set $\left\{L^-_{-n}{t\choose{k+m_0(-n)}}\right\}_{k\geq 0}$ is an $R[c]$-basis for $\Int_{R[c]}(H^-[n])$ for $n<0$.
\end{enumerate}
We will only prove (1) and (2), since (3) and (4) are proved similarly. For (1), assume $n\geq 0$ and let $f(2t)\in R[2t]$. By induction, we can find coefficients $\alpha_k\in R[c]$ such that $f(t)=\sum_{k\geq 0}\alpha_k \prod^{k-1}_{i=0}(t-2i)$. Then $f(2t) = \sum_{k\geq 0}\alpha_k 2^kk!{t\choose k}$. By Lemma~\ref{intvaldivlemma}, if $d|f(2n)$ for all $n$ then $d|\alpha_k2^kk!$. This means that $\frac{f(t)}{d} = \sum_{k\geq 0}\frac{\alpha_k 2^k k!}{d}{t\choose k}$. Since $H^+[n]=R[2t]$ when $n\geq 0$, it follows that $\left\{t\choose k\right\}_{k\geq 0}$ is a basis for $\Int_{R[c]}(H^+[n])$.

For (2), assume $n<0$ and let $m = m_1(-n)$. Note that $D^+_{-n}(t) = L_{-n}^+(t)\prod^{m-1}_{i=0}(2t-2i)$. Let $f(2t)\in R[2t]$ be arbitrary and suppose $d|L_{-n}^+(t)\prod^{m-1}_{i=0}(2t-2i)f(t)$ since $L_{-n}^+(t)$ is a primitive polynomial, it follows that $d|\prod^{m-1}_{i=0}(2t-2i)f(t)$. Writing $f(t)=\sum_{j\geq 0}\alpha_jj!{t-m\choose j}$ we have $d|\prod^{m-1}_{i=0}(2t-2i)f(t)=\sum_{j\geq 0}2^{m+j}(m+j)!\alpha_j {t\choose {m+j}}$. By Lemma~\ref{intvaldivlemma},
\[\frac{D_{-n}^+(t)f(t)}{d} = \frac{L_{-n}^+(t)\prod^{m-1}_{i=0}(2t-2i)f(t)}{d}=\sum_{j\geq 0}\frac{2^{m+j}(m+j)!\alpha_j}{d}L_{-n}^+(t){t\choose{m+j}}.\]
and the claim follows, since $H^+[n]=D_{-n}^+(t)R[2t]$ when $n<0$.
\end{proof}

The above proposition immediately implies Theorem~\ref{basistheorem}.

\subsection{Hilbert Series for $H_{1,c}^\DP(R)$}
\begin{defn}
Let $M$ be a module over a domain $R$ and suppose we have a filtration $M=\bigcup_{i\geq 0}M_i$. Let $\rm{gr}(M)$ be the associated graded module of $M$ with respect to the filtration, i.e. $\rm{gr}(M)=M_0\oplus \bigoplus_{i\geq 1}(M_{i}/M_{i-1})$. Let $\rm{gr}_n(M)$ be the $n$-th graded component of $\rm{gr}(M)$. The Hilbert series of $M$ is defined as 
\[\mathbf{HS}_M(z) = \sum_{n\geq 0}\dim_R(\rm{gr}_n(M))z^n.\]
\end{defn}

In the following proposition, we show that the Hilbert series of the rational Cherednik algebra of type $A_1$ remains unchanged after the divided power extension construction. 

\begin{prop}
Let $R$ be a principal ideal domain. Then:
\begin{enumerate}
    \item $\mathbf{HS}_{H_{1,c}(R)}(z) = \frac{2}{(1-z)^2}$.
    \item $\mathbf{HS}_{H_{1,c}^\DP(R[c])}(z) = \frac{2}{(1-z)^2}$.
    \item For any $c\in R$, $\mathbf{HS}_{H_{1,c}^\DP(R)}(z) = \frac{2}{(1-z)^2}$.
\end{enumerate}
\end{prop}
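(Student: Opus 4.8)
The plan is to compute all three series with respect to the Bernstein filtration on $H_{1,c}(R)$, the one for which $x$ and $D$ have degree $1$ and $\e_+,\e_-$ have degree $0$; this is the natural choice, since the order filtration ($\deg x=0$) would give infinite-rank graded pieces. I would first note that this filtration is compatible with the weight grading $x\mapsto 1$, $D\mapsto -1$, $\e_\pm\mapsto 0$ of Section~\ref{freeness}, so each $\mathrm{gr}_d$ splits over the weights $n$ and the signs $\pm$. For part (1), I would use the isomorphism $H_{1,c}(R)\cong\bigoplus_n(H^+[n]\oplus H^-[n])$ from the proof of Theorem~\ref{freenessofdividedpowerextensiontheorem}: tracing $\psi$ shows that $x^{n+k}D^k\e_+$ (for $n\ge 0$) and $x^aD^{a-n}\e_+$ (for $n<0$), together with the $\e_-$ analogues, realize the bases of $H^\pm[n]$, so that $\{x^aD^b\e_+, x^aD^b\e_-\}_{a,b\ge 0}$ is an $R$-basis of $H_{1,c}(R)$. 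Each such element is weight-homogeneous of Bernstein degree $a+b$, so the basis is filtration-compatible and $\mathrm{gr}_d$ is free on the $2(d+1)$ elements with $a+b=d$. Hence $\sum_{d\ge 0}2(d+1)z^d=\frac{2}{(1-z)^2}$, giving (1).

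For part (2) I would use the explicit $R[c]$-basis $\{\Delta_{n,k}^\pm, x^{n+1}\Delta_{0,k}^\pm\}$ of Theorem~\ref{basistheorem}. Reading Bernstein degrees off the formulas (each factor $xD$ contributes $2$, each $D$ contributes $1$, the scalar denominators contribute $0$) gives $\deg\Delta_{n,k}^\pm=n+2k$ and $\deg\bigl(x^{n+1}\Delta_{0,k}^\pm\bigr)=n+1+2k$. Within each weight these basis elements realize exactly one eigenvalue-polynomial degree apiece, running consecutively from the minimal one, so the basis is filtration-compatible and $\dim_R\mathrm{gr}_d$ equals the number of basis elements of degree $d$. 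The bookkeeping — the pairs $(n,k)$ with $n+2k=d$ and those with $n+1+2k=d$ — sums to $2\bigl(\lfloor d/2\rfloor+1\bigr)+2\bigl(\lfloor (d-1)/2\rfloor+1\bigr)=2(d+1)$ using $\lfloor d/2\rfloor+\lfloor (d-1)/2\rfloor=d-1$; equivalently the admissible weights (across both signs) fill $\{-d,-d+2,\dots,d\}$, a set of size $d+1$. This yields (2), and it is essential that the argument is via an explicit free basis, since $R[c]$ need not be a PID.

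For part (3), where $R$ is a PID but we lack a closed-form basis, I would argue indirectly. Since $H_{1,c}^\DP(R)\otimes\Q=H_{1,c}(R)\otimes\Q$ and the Bernstein filtration here is the restriction of the one on $H_{1,c}(R)\otimes\Q$, flatness of $\Q$ gives $\mathrm{gr}_d\bigl(H_{1,c}^\DP(R)\bigr)\otimes\Q\cong\mathrm{gr}_d\bigl(H_{1,c}(R)\bigr)\otimes\Q$, so by (1) the rank of $\mathrm{gr}_d\bigl(H_{1,c}^\DP(R)\bigr)$ is $2(d+1)$. To finish I would show each $F_d H_{1,c}^\DP(R)$ is finitely generated: decomposing over the finitely many weights $|n|\le d$ and both signs, the weight-$n$ piece of Bernstein degree $\le d$ is, in the eigenvalue-polynomial description, the set of $f\in\Int_R(H^\pm[n])$ of bounded degree, hence a submodule of the finite free module $\Int_R(R[t])$ cut off in degree (free on the $\binom{t}{k}$ by the analogue of Proposition~\ref{newtonfree}); over a PID this submodule is free of finite rank. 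Thus $F_dH_{1,c}^\DP(R)$, and therefore $\mathrm{gr}_d$, is finitely generated; being finitely generated and torsion-free over the PID $R$, it is free, necessarily of rank $2(d+1)$, which gives (3).

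The main obstacle is exactly this last step of part (3): the $\otimes\Q$ computation delivers the rank for free, but one must still exclude a non-finitely-generated torsion-free $\mathrm{gr}_d$ of the right rank (the way $\Q$ sits over $\Z$), and it is the intersection with the finite free module $\Int_R(R[t])$ together with the PID hypothesis that prevents this. A secondary point, used in (1) and (2), is verifying filtration-compatibility of the chosen bases, i.e.\ that their leading Bernstein symbols are $R$-linearly independent; this follows because each basis is simultaneously homogeneous for the weight grading and realizes one eigenvalue-degree per weight, so no degree-$d$ symbol can fall into $F_{d-1}$ without forcing a linear dependence among basis elements.
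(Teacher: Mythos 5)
Your parts (1) and (2) are correct and essentially identical to the paper's argument: part (1) is the count $2(d+1)$ of PBW monomials $x^aD^b\e_\pm$ with $a+b=d$ under the Bernstein filtration (which is indeed the filtration the paper intends, as its formula $\dim_R(\mathrm{gr}_n)=2(n+1)$ shows), and part (2) is the degree bookkeeping for the explicit basis of Theorem~\ref{basistheorem}; your count $2\left(\floor*{d/2}+1\right)+2\left(\floor*{(d-1)/2}+1\right)=2(d+1)$ checks out, as does your triangularity argument for filtration-compatibility. Where you genuinely diverge is part (3): the paper simply invokes Proposition~\ref{basistheoremforpid}, whose basis $\{\Delta^\pm_{n,k},\,x^{n+1}\Delta^\pm_{0,k}\}$ sits in the same Bernstein degrees as the explicit $R[c]$-basis, so the count from (2) transfers verbatim. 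You instead bypass that proposition entirely: you obtain the generic rank of $\mathrm{gr}_d$ by tensoring with $\Q$ (valid, since $H^\DP_{1,c}(R)\otimes\Q=H_{1,c}(R)\otimes\Q$ and $R\otimes\Q$ is a localization, hence flat), and then promote rank to freeness by showing $F_dH^\DP_{1,c}(R)$ is finitely generated --- the weight decomposition confines everything to finitely many $\Int_R(H^\pm[n])$ in bounded polynomial degree, and Newton interpolation places each such piece inside the finite free module on $\binom{t}{k}$, $k\leq N$ --- and torsion-free over the PID $R$ (torsion-freeness of $\mathrm{gr}_d$ itself follows because multiplication by a nonzero element of the domain $R$ preserves the degree of the eigenvalue polynomials, so the filtration is saturated; this is worth saying explicitly). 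Both routes are sound. The paper's is shorter because Proposition~\ref{basistheoremforpid} has already packaged the needed degree data; yours is more self-contained, relying only on statement (1) and the structure theory of finitely generated modules over a PID, and it correctly isolates the one real danger --- a torsion-free graded piece of the right generic rank that fails to be finitely generated, as $\Q$ over $\Z$ --- which the paper's phrasing leaves implicit.
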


\begin{proof}
(1) immediately follows from the PBW Theorem, since $H_{1,c}(R)$ is generated by elements of the form $x^{\ell}D^k\e_\pm$. This implies that $\dim_R(\rm{gr}_n(H_{1,c}(R)))=2(n+1)$. (2) follows from a similar argument, since by Theorem~\ref{basistheorem}, $\dim_{R[c]}(\rm{gr}_n(H_{1,c}(R[c])))=2(n+1)$. (3) is the same as (2), since Proposition~\ref{basistheoremforpid} shows that the basis for $H_{1,c}^\DP(R)$ has the same degree as the basis for $H_{1,c}^\DP(R[c])$.
\end{proof}

\subsection{The Lie Algebra $\frak{sl}_2$}
\begin{defn}
A triple of operators $E,H,F$ is said to be an $\fr{sl}_2$-triple if:
\begin{itemize}
    \item $[H,E]=2E$
    \item $[H,F]=-2F$
    \item $[E,F]=H$
\end{itemize}
\end{defn}

\begin{prop}
In $H_{1,c}(R[c])$ let $H=(xD+\frac{1-2c}{2})\e_+$, $E=-\frac{1}{2}x^2\e_+$, and $F=\frac{1}{2}D^2\e_+$. Then $E,H,F$ form an $\fr{sl}_2$-triple. It follows that $\e_+H_{1,c}(R[c])\e_+$ is isomorphic to a quotient of $\mathcal{U}(\fr{sl}_2)$ by the central character $\langle C+\frac{(1-2c)(3+2c)}{8}\rangle$, where $C$ is the Casimir operator $C=EF+FE+\frac{H^2}{2}$.
\end{prop}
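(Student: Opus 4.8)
The plan is to realize everything concretely through the action on even polynomials (the elements of $\e_+H_{1,c}(R[c])\e_+$ are already honest operators on $R[c][x]$) and then run the standard argument identifying the spherical subalgebra of type $A_1$ with a central quotient of $\mathcal U(\fr{sl}_2)$, keeping careful track of the $\Z$-grading. First I would record the action on the basis $\{x^{2n}\}_{n\ge 0}$, on which $\e_+$ acts as the identity: from $Dx^{2n}=2nx^{2n-1}$ and $Dx^{2n+1}=(2n+1-2c)x^{2n}$ one gets $H\,x^{2n}=(2n+\tfrac{1-2c}{2})x^{2n}$, $E\,x^{2n}=-\tfrac12 x^{2n+2}$, and $F\,x^{2n}=n(2n-1-2c)x^{2n-2}$. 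The three bracket relations then reduce to one-line identities in $n$; the only one needing computation is $[E,F]=H$, where the cross terms combine to $\tfrac12\big((n+1)(2n+1-2c)-n(2n-1-2c)\big)=2n+\tfrac{1-2c}{2}$, exactly the eigenvalue of $H$. This produces a well-defined algebra homomorphism $\phi:\mathcal U(\fr{sl}_2)\to \e_+H_{1,c}(R[c])\e_+$ sending $e,h,f$ to $E,H,F$.

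Next I would compute the Casimir on $x^{2n}$. The $n$-dependence of $EF+FE$ and of $\tfrac12 H^2$ must cancel (a built-in check that $C$ is central), leaving the scalar $\tfrac{4c^2+4c-3}{8}=-\tfrac{(1-2c)(3+2c)}{8}$. Hence $C+\tfrac{(1-2c)(3+2c)}{8}\in\ker\phi$, so $\phi$ factors through a surjection $\bar\phi:\mathcal U(\fr{sl}_2)/\langle C+\tfrac{(1-2c)(3+2c)}{8}\rangle\to \e_+H_{1,c}(R[c])\e_+$, and it remains to show $\bar\phi$ is an isomorphism.

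For surjectivity I would use the grading $\deg x=1,\ \deg D=-1,\ \deg s=0$, under which $E,H,F$ are homogeneous of degrees $2,0,-2$ and $\e_+H_{1,c}(R[c])\e_+=\bigoplus_m S[2m]$. In degree $0$ the PBW elements $x^kD^k\e_+$ act on $x^{2n}$ by $\prod_{i=0}^{k-1}a_i(n)$ with $a_i(n)=2n-i$ for $i$ even and $a_i(n)=2n-i-2c$ for $i$ odd; these share the leading terms $(2n)^k$ of the powers of $H$, so a triangular leading-coefficient comparison gives $S[0]=R[c][H]$. The positive pieces are then $R[c][H]\,E^m$ and the negative ones $F^{|m|}R[c][H]$, since $E^m$ and $F^{|m|}$ realize the degree shift with nonzero coefficient, so $\phi$ is surjective. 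For injectivity I would pass to the normal form of the central quotient: from $C=2fe+h+\tfrac12 h^2$ one solves $fe\equiv \tfrac12(\lambda-h-\tfrac12 h^2)$ modulo the Casimir, so the quotient is spanned by the homogeneous family $\{h^j\}\cup\{e^mh^j\}\cup\{f^mh^j\}$. Applying $\bar\phi$ in degree $2m$, the images $\bar\phi(e^mh^j)$ act on $x^{2n}$ by $p_j(n)x^{2n+2m}$ with $\deg_n p_j=j$, hence are linearly independent; the same holds in each degree, so $\bar\phi$ is injective and therefore an isomorphism.

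I expect the injectivity step, together with the accompanying integrality bookkeeping, to be the main obstacle: one must both pin down the normal form of $\mathcal U(\fr{sl}_2)/\langle C-\lambda\rangle$ and confront the fact that $E$ and $H$ carry a factor $\tfrac12$, so that strictly $E,H$ lie in $\big(\e_+H_{1,c}(R[c])\e_+\big)\otimes\Q$ rather than in the integral algebra. The identification $S[0]=R[c][H]$ and the comparison of graded ranks should therefore be carried out after inverting $2$, or by tracking the precise $2$-divisibility of the operators $x^kD^k\e_+$ in the sense of the divided powers $(\cdot)^\DP$ studied earlier in the paper; this is exactly the divisibility phenomenon that the rest of the paper is designed to control.
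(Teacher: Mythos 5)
The paper states this proposition without any proof, so there is nothing to compare your argument against; judged on its own, your proof is correct and supplies exactly what the paper omits. The basis computations are right: $Dx^{2n}=2nx^{2n-1}$ and $Dx^{2n+1}=(2n+1-2c)x^{2n}$ give $Hx^{2n}=(2n+\tfrac{1-2c}{2})x^{2n}$, $Fx^{2n}=n(2n-1-2c)x^{2n-2}$, the three bracket identities reduce to the polynomial identity you display, and the Casimir indeed acts by the scalar $-\tfrac{1}{2}+c+\tfrac{(1-2c)^2}{8}=\tfrac{4c^2+4c-3}{8}=-\tfrac{(1-2c)(3+2c)}{8}$, matching the central character in the statement. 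The surjectivity argument via the grading (degree-zero part generated by $x^kD^k\e_+$, which is triangular against powers of $H$ with unit leading coefficients; higher pieces obtained by multiplying by powers of $E$ and $F$) and the injectivity argument via the normal form $\{h^j, e^mh^j, f^mh^j\}$ of $\mathcal U(\fr{sl}_2)/\langle C-\lambda\rangle$ together with linear independence of the resulting polynomials in $n$ are both sound; the latter uses only that $R[c]$ is a characteristic-zero domain, so a polynomial vanishing on $\Z_{\geq 0}$ vanishes. Your closing caveat is well taken but is an imprecision in the paper's statement rather than a gap in your proof: since $E$, $H$, and the ideal generator all involve division by $2$, the asserted isomorphism only makes literal sense over $R[c]\otimes\Q$ (equivalently when $2\in R^\times$), and your argument is complete in that setting; no further ``integrality bookkeeping'' is actually required to prove the proposition as it is meant.
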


This map suggests a divided power structure on this quotient of $\mathcal U(\fr{sl}_2)$. An immediate corollary to Theorem~\ref{freenessofdividedpowerextensiontheorem} states:
\begin{cor}\label{sphericalcorollary}
The set $\{\Delta^+_{2n,k}, x^{2n+2}\Delta^+_{0,k}\}_{n,k\geq 0}$ is an $R[c]$-basis for $\e_+ H_{1,c}^\DP(R[c])\e_+$.
\end{cor}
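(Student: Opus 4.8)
The plan is to derive Corollary~\ref{sphericalcorollary} directly from the $R[c]$-basis for the full algebra $H_{1,c}^\DP(R[c])$ constructed in Theorem~\ref{basistheorem}, by understanding how the idempotent $\e_+$ acts on that basis through left and right multiplication. First I would observe that $\e_+ H_{1,c}^\DP(R[c]) \e_+$ is spanned by elements $\e_+ b \e_+$ as $b$ ranges over the basis $\{\Delta^\pm_{n,k}, x^{n+1}\Delta^\pm_{0,k}\}_{n,k\geq 0}$. Since every $\Delta^-$ operator already carries a factor $\e_-$ and every $\Delta^+$ operator carries a factor $\e_+$, and since $\e_+\e_-=\e_-\e_+=0$ while $\e_+^2=\e_+$, the minus-type basis elements are annihilated on both sides, and a plus-type basis element $b=b\e_+$ satisfies $\e_+ b \e_+ = \e_+ b$. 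Thus the corollary reduces to determining precisely which plus-type basis elements $b$ additionally satisfy $\e_+ b = b$, i.e.\ which of them lie in the left ideal $\e_+ H_{1,c}^\DP(R[c])$.

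The key computation is then to track the parity behavior of each $\Delta^+_{n,k}$ and $x^{n+1}\Delta^+_{0,k}$ under left multiplication by $\e_+$. Here I would use the $\Z$-grading from the proof of Theorem~\ref{freenessofdividedpowerextensiontheorem} (with $x\mapsto 1$, $D\mapsto -1$, $s\mapsto 0$) together with the fact, recorded in that proof, that $\e_\pm$ picks out the action on even versus odd powers of $x$. Concretely, $\e_+$ on the left selects operators whose image lies in even powers of $x$; since $\Delta^+_{n,k}$ acts by sending $x^{2t}\mapsto (\text{scalar})\,x^{2t-n}$ and kills odd powers, the condition $\e_+\Delta^+_{n,k}=\Delta^+_{n,k}$ holds exactly when $2t-n$ is even for all relevant $t$, i.e.\ when $n$ is even. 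This is what forces the index $n=2m$ to be even in the surviving family $\{\Delta^+_{2n,k}\}$, and the analogous parity count applied to $x^{n+1}\Delta^+_{0,k}$ (which raises degree by $n+1$) forces $n+1$ to be even, i.e.\ the surviving positively-graded family is $\{x^{2n+2}\Delta^+_{0,k}\}$. I would verify these two parity conditions carefully and check that the remaining (odd-graded) plus-type elements are genuinely annihilated by $\e_+$ rather than merely rescaled.

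Having identified the spanning set $\{\Delta^+_{2n,k}, x^{2n+2}\Delta^+_{0,k}\}_{n,k\geq 0}$, the last step is linear independence over $R[c]$. This is immediate: these elements form a subset of the basis $(\psi^\DP)^{-1}$-image established in Theorem~\ref{basistheorem}, so their independence is inherited, and no new relations can appear since $\e_+$ acts as the identity on each of them. Combining spanning and independence yields that this subset is an $R[c]$-basis for $\e_+ H_{1,c}^\DP(R[c])\e_+$.

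I expect the main obstacle to be the bookkeeping in the second step: one must be scrupulous about the conventions for how $\e_+$ and $\e_-$ interact with the homogeneous components $P[n]\e_\pm$, and about the fact that in the basis of Theorem~\ref{basistheorem} the graded degree is indexed by $n$ while the operators themselves act with a parity shift of $n$ (respectively $n+1$ for the $x^{n+1}\Delta^+_{0,k}$ family). Getting the parity conditions to produce exactly the even indices $2n$ and $2n+2$ appearing in the statement, and confirming that the discarded elements vanish under $\e_+$ rather than surviving, is where a sign or parity error would most easily creep in; the surrounding algebra is otherwise routine given Theorem~\ref{basistheorem}.
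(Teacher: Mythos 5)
Your argument is correct and matches the paper's intent: the paper states this as an immediate consequence of the basis in Theorem~\ref{basistheorem}, and your parity analysis (minus-type elements killed by the outer $\e_\pm$'s, plus-type elements surviving exactly when their graded degree is even, independence inherited from the full basis) is precisely the verification that makes it immediate. No gaps.
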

Writing this basis in terms of the $\fr{sl}_2$-triple gives us a basis for a divided power structure on $\mathcal U(\fr{sl}_2)$. Let 
\[\Sigma_{a,b,c} = \frac{(-2E)^a (2F)^b \prod^{c-1}_{i=0}\left( H- \frac{1-2c}{2}-2(i+m_1(2b)\right)}{2^{m_1(2b)+c}(m_1(2b)+c)!}\in \mathcal U(\fr{sl}_2(\Q)).\]
Then the set $\{\ \Sigma_{0,n,k}, \Sigma_{n+1,0,k}\}_{n,k\geq 0}$ is a basis for a divided power structure on a quotient of $\mathcal{U}(\fr{sl}_2)$.

\textit{Note:} This basis of divided powers is different from the basis given in \cite{jantzen2007representations}. Indeed the basis given there is symmetric, containing both divided powers of $E$ and $F$. Our divided power extension contains no divided powers of $E$ (indeed the denominator above does not depend on $a$ at all), but it has more divided powers of $F$.

\section{Abstract Construction of $H_{1,c}^\DP(R)$}\label{abstractconstructionsection}
In this section, we prove Theorem~\ref{abstractconstructiontheorem} which takes some setup to properly state.
\subsection{Grothendieck Differential Operators}
Before stating the main theorem, we recall a purely algebraic notion of differential operators due to Grothendieck. The results from this section can be found in \cite{diffoperatorsutah}.

\begin{defn}[Grothendieck Differential Operators] Let $R\subset A$ be a pair of commutative rings. For any $a\in A$, let $\overline{a}$ be the multiplication by $a$ operator on $A$. We define the $R$-linear differential operators on $A$ of order at most $i$, denoted $\Diff_R(A)^i$ inductively in $i$.
\begin{itemize}
    \item $\Diff_R(A)^0 = \Hom_A(A,A) = \{ \overline{a} : a\in A\}$
    \item $\Diff_R(A)^i = \{f\in \Hom_R(A,A) : [f,\overline{a}]\in \Diff_R(A)^{i-1}, \forall a\in A\}$
\end{itemize}
Let $\Diff_R(A) = \bigcup^\infty_{i=0}\Diff^i_R(A)\subset \Hom_R(A,A)$ be the algebra of differential operators of $A$ over $R$. When $R$ is clear, we simply write $\Diff(A)$ to denote $\Diff_R(A)$.
\end{defn}

For the results in Section~\ref{abstractconstruction}, it suffices to consider differential operators of polynomial algebra. The following results describe the structure of the ring of differential operators completely.

\begin{defn}
For any $\lambda \in \N^n$ let $\partial^\lambda$ be the Hasse derivative, i.e. the $R$-linear operator on $R[x_1,\ldots,x_n]$ given on the basis by
\[\partial^\lambda(x_1^{\beta_1}\cdots x_n^{\beta_n}) = {\beta_1\choose \lambda_1}\cdots {\beta_n\choose \lambda_n}x_1^{\beta_1-\lambda_1}\cdots x_n^{\beta_n-\lambda_n}.\]
\end{defn}

In rings where $\lambda_1!\cdots\lambda_n!\in R^\times$, the operator $\partial^\lambda$ is simply $\partial^\lambda = \frac{1}{\lambda_1!\cdots\lambda_n!}\frac{\partial^{\lambda_1}}{\partial x_1^{\lambda_1}}\cdots \frac{\partial^{\lambda_n}}{\partial x_n^{\lambda_n}}$.

\begin{prop}
Let $A=R[x_1,\ldots,x_n]$. Then $\Diff_R(A) = \bigoplus_{\lambda\in\N^n}A\partial^\lambda$, where multiplication is given by composition of operators.
\end{prop}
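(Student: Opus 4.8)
The plan is to establish the three assertions contained in the statement: that each Hasse derivative $\partial^\lambda$ is a genuine differential operator of order $|\lambda| = \lambda_1 + \cdots + \lambda_n$, that the sum $\sum_\lambda A\partial^\lambda$ is direct, and that these operators span $\Diff_R(A)$. Throughout I write $\e_j$ for the $j$-th unit multi-index, so that $\partial^{\lambda - \e_j}$ makes sense (interpreted as $0$ when $\lambda_j = 0$). The computational heart is the one-line identity $[\partial^\lambda, \overline{x_j}] = \partial^{\lambda - \e_j}$, which I would verify on the monomial basis using Pascal's rule $\binom{\beta_j + 1}{\lambda_j} = \binom{\beta_j}{\lambda_j} + \binom{\beta_j}{\lambda_j - 1}$. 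Before anything else I would record two elementary facts from the definition: each $\Diff_R(A)^i$ is a two-sided $A$-module (from $[\overline a f, \overline b] = \overline a[f,\overline b]$ and $[f\overline a, \overline b] = [f, \overline b]\overline a$, using commutativity of $A$), and consequently, since $[f, \overline{ab}] = [f,\overline a]\overline b + \overline a [f, \overline b]$, an operator $f$ lies in $\Diff_R(A)^i$ as soon as $[f, \overline{x_j}] \in \Diff_R(A)^{i-1}$ for every generator $x_j$. Granting these, an induction on $|\lambda|$ using $[\partial^\lambda, \overline{x_j}] = \partial^{\lambda - \e_j}$ shows $\partial^\lambda \in \Diff_R(A)^{|\lambda|}$, whence $A\partial^\lambda \subset \Diff_R(A)^{|\lambda|}$.

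For directness, suppose $\sum_\lambda a_\lambda \partial^\lambda = 0$ is a finite relation with $a_\lambda \in A$ not all zero, and choose $\lambda_0$ minimal in the componentwise partial order among the multi-indices with $a_{\lambda_0} \neq 0$. Applying the operator to $x^{\lambda_0}$ and using $\partial^\lambda x^{\lambda_0} = \binom{\lambda_0}{\lambda} x^{\lambda_0 - \lambda}$, every term with $\lambda \not\le \lambda_0$ vanishes because $\binom{\lambda_0}{\lambda} = 0$, while every term with $\lambda < \lambda_0$ vanishes by minimality; only $\lambda = \lambda_0$ survives, giving $a_{\lambda_0} = 0$, a contradiction. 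Hence the $\partial^\lambda$-expansion of any operator in $\bigoplus_\lambda A\partial^\lambda$ is unique.

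For spanning I would prove by induction on $i$ that $\Diff_R(A)^i = \bigoplus_{|\lambda| \le i} A\partial^\lambda$. The base case $i = 0$ is exactly $\Diff_R(A)^0 = \{\overline a\} = A\partial^0$. The key lemma for the step is that an operator $f \in \Diff_R(A)^i$ with $[f, \overline{x_j}] = 0$ for all $j$ is multiplication by $f(1)$: it then commutes with multiplication by every element of $A$, so $f \in \Hom_A(A,A) = \Diff_R(A)^0$. Given arbitrary $f \in \Diff_R(A)^i$, each $g_j := [f, \overline{x_j}]$ lies in $\Diff_R(A)^{i-1}$ and, by the inductive hypothesis, has a unique expansion $g_j = \sum_{|\mu| \le i-1} a_{j,\mu}\partial^\mu$. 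Using $[\overline a \partial^\lambda, \overline{x_j}] = \overline a\,\partial^{\lambda - \e_j}$, I would build $F = \sum_{1 \le |\lambda| \le i} c_\lambda \partial^\lambda$ with $[F, \overline{x_j}] = g_j$ for all $j$; matching $\partial^\mu$-coefficients (legitimate by the uniqueness just proved) forces $c_{\mu + \e_j} = a_{j,\mu}$. Once such an $F$ exists, $f - F$ kills all commutators with the $\overline{x_j}$, so by the lemma $f - F = \overline a$ for some $a \in A$, giving $f = \overline a + F \in \bigoplus_{|\lambda| \le i} A\partial^\lambda$ and closing the induction.

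The main obstacle is precisely the existence of $F$: the prescription $c_\lambda = a_{j, \lambda - \e_j}$ must be independent of the choice of index $j$ with $\lambda_j \ge 1$. This consistency, namely $a_{j, \lambda - \e_j} = a_{k, \lambda - \e_k}$ whenever $\lambda_j, \lambda_k \ge 1$, is an integrability condition that I would extract from the identity $[[f, \overline{x_j}], \overline{x_k}] = [[f, \overline{x_k}], \overline{x_j}]$ (valid because $\overline{x_j}$ and $\overline{x_k}$ commute): expanding both sides as $[g_j, \overline{x_k}]$ and $[g_k, \overline{x_j}]$ and comparing the coefficient of $\partial^{\lambda - \e_j - \e_k}$ via uniqueness of expansions yields exactly the required equality. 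Verifying this compatibility carefully, together with the edge cases where some $\lambda_j = 0$, is where the real content of the argument lies; the remainder is bookkeeping with multi-indices.
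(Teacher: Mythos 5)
The paper states this proposition without proof, deferring to its reference on Grothendieck differential operators, so there is no in-paper argument to compare against; your proposal supplies the standard proof, and it is correct. All the essential points are present and in the right order: the commutator identity $[\partial^\lambda,\overline{x_j}]=\partial^{\lambda-\e_j}$ verified via Pascal's rule, the reduction to checking commutators on algebra generators (which rests on the product rule $[f,\overline{ab}]=[f,\overline a]\overline b+\overline a[f,\overline b]$ and the two-sided $A$-module structure of each $\Diff_R(A)^i$), directness via evaluation on $x^{\lambda_0}$ for a componentwise-minimal $\lambda_0$, and the inductive spanning argument. You also correctly isolate the one genuinely delicate step, the consistency condition $a_{j,\lambda-\e_j}=a_{k,\lambda-\e_k}$ needed to assemble the antiderivative $F$, and resolve it by the Jacobi identity applied to the commuting operators $\overline{x_j},\overline{x_k}$ together with uniqueness of expansions. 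Nothing is missing; this is a complete and correct route to the statement.
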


Since we are dealing with differential operators defined on a punctured line, we need to consider rings of differential operators over localized polynomial rings as well.

\begin{prop}
Let $R\subset A$ be rings where $A$ is of finitely generated over $R$. Let $W\subset A$ be a multiplicative subset. Then $W^{-1}\Diff^i_R(A) \cong \Diff^i_R(W^{-1}A)$.
\end{prop}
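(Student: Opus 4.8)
The plan is to exhibit the natural localization map $W^{-1}\Diff^i_R(A) \to \Diff^i_R(W^{-1}A)$ and prove it is an isomorphism, with the whole argument resting on a single extension lemma: \emph{every $D\in\Diff^i_R(A)$ extends uniquely to an operator $\tilde D\in\Diff^i_R(W^{-1}A)$ restricting to $D$ on $A$}. I would prove this lemma by induction on the order $i$. The base case $i=0$ is immediate, since $\Diff^0_R(A)=\{\overline a : a\in A\}\cong A$ and its localization is exactly $W^{-1}A=\Diff^0_R(W^{-1}A)$. For the inductive step, note that for $w\in W$ the commutator $[D,\overline w]$ has order $\le i-1$, so by the inductive hypothesis it already admits a unique extension $\widetilde{[D,\overline w]}$ to $W^{-1}A$.

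For uniqueness, suppose $\tilde D$ is any order-$\le i$ extension. Applying the Leibniz relation $\tilde D(\overline w z)=\overline w\,\tilde D(z)+[\tilde D,\overline w](z)$ at $z=a/w$ and using $[\tilde D,\overline w]=\widetilde{[D,\overline w]}$ (forced by induction) yields
\[
\tilde D(a/w)=\tfrac{1}{w}\Bigl(D(a)-\widetilde{[D,\overline w]}(a/w)\Bigr),
\]
so $\tilde D$ is completely determined. For existence, I would take this same formula as the \emph{definition} of $\tilde D$, then verify that it is independent of the chosen representative $a/w$, is $R$-linear, and has order $\le i$ (the last point reducing, via the identity $[\tilde D,\overline{b/v}]$, to the order bound on $\widetilde{[D,\overline w]}$ supplied by induction). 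Checking that the formula is well defined on fractions is the most delicate bookkeeping step, as one must reconcile two representatives $a/w=b/v$ using the already-established extensions of the lower-order commutators.

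With the lemma in hand, I would define the map by $D/w\mapsto \tfrac1w\,\tilde D$ (compose the extension with multiplication by $1/w\in W^{-1}A$); $A$-linearity and additivity of $D\mapsto\tilde D$, together with uniqueness, guarantee this is well defined on the localization. Injectivity is then formal: if $\tfrac1w\tilde D=0$ then $\tilde D=0$ since $w$ is a unit in $W^{-1}A$, whence $D=\tilde D|_A=0$ and $D/w=0$.

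The main obstacle is surjectivity, and this is precisely where the finite generation of $A$ over $R$ enters. Writing $A$ as a quotient of $R[a_1,\dots,a_n]$, the vanishing of all $(i+1)$-fold commutators shows that any $E\in\Diff^i_R(W^{-1}A)$ is determined by its values on the \emph{finitely many} monomials $a_1^{e_1}\cdots a_n^{e_n}$ with $\textstyle\sum_j e_j\le i$. Each such value lies in $W^{-1}A$ and hence has a denominator in $W$; choosing a common denominator $w\in W$, the operator $wE$ sends every generating monomial into $A$, and the finite-determination property then forces $wE(A)\subseteq A$, so $wE|_A\in\Diff^i_R(A)$. By the uniqueness half of the extension lemma, $wE=\widetilde{wE|_A}$, whence $E=\tfrac1w\,\widetilde{wE|_A}$ lies in the image. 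The care needed in the denominator-clearing step, namely justifying that control on finitely many jet values propagates to all of $A$, is the crux of the argument.
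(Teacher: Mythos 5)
Your proposal is correct. Note that the paper does not prove this proposition at all --- it is quoted from the cited reference on Grothendieck differential operators --- and your argument is essentially the standard one found there: unique extension of an order-$\le i$ operator along localization by induction on the order via the recursion $\tilde D(a/w)=\tfrac{1}{w}\bigl(D(a)-\widetilde{[D,\overline w]}(a/w)\bigr)$, followed by denominator-clearing for surjectivity, which is exactly where the finite generation of $A$ over $R$ is used (an order-$\le i$ operator is determined by its values on monomials of degree $\le i$ in the generators, via vanishing of $(i+1)$-fold commutators). The two steps you flag as delicate --- well-definedness of the extension on fractions, and propagating integrality from the finitely many jet values to all of $A$ --- are genuinely the only places requiring care, and both go through as you describe; the only situation needing extra attention is when $A\to W^{-1}A$ fails to be injective (then ``$wE|_A$'' must be interpreted on the image of $A$, and injectivity of the localization map requires killing $D$ by a single element of $W$ rather than concluding $D=0$ outright), but this does not arise in the paper's applications, where $A$ is a polynomial ring over a domain and $W$ consists of nonzerodivisors.
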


\begin{cor} $\Diff_R(R[x^{\pm 1}_1,\ldots,x_n^{\pm 1}]) =  \bigoplus_{\lambda\in\N^n}R[{x^{\pm 1}}_1,\ldots, {x^{\pm 1}}_n]\partial^\lambda$, where multiplication is given by composition of operators.
\end{cor}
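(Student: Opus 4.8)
The plan is to realize the Laurent polynomial ring as a localization of the ordinary polynomial ring and then combine the two immediately preceding results. Set $A = R[x_1,\ldots,x_n]$, which is finitely generated over $R$, and let $W \subset A$ be the multiplicative subset generated by $x_1,\ldots,x_n$, so that $W^{-1}A = R[x_1^{\pm 1},\ldots,x_n^{\pm 1}]$. Everything then reduces to tracking how the structure result for $\Diff_R(A)$ behaves under the localization isomorphism.

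First I would fix an order $i$ and apply the localization proposition to get $\Diff_R^i(W^{-1}A) \cong W^{-1}\Diff_R^i(A)$. From the structure proposition, $\Diff_R^i(A) = \bigoplus_{|\lambda|\le i} A\partial^\lambda$ as a left $A$-module, with each summand $A\partial^\lambda \cong A$. Since localization commutes with finite direct sums, this yields $W^{-1}\Diff_R^i(A) = \bigoplus_{|\lambda|\le i} W^{-1}A\,\partial^\lambda = \bigoplus_{|\lambda|\le i} R[x_1^{\pm 1},\ldots,x_n^{\pm 1}]\,\partial^\lambda$. I would then pass to the union over all $i$: since $\Diff_R(W^{-1}A) = \bigcup_i \Diff_R^i(W^{-1}A)$ is the union of these order-filtered pieces, each a finite direct sum, the union is $\bigoplus_{\lambda\in\N^n} R[x_1^{\pm 1},\ldots,x_n^{\pm 1}]\,\partial^\lambda$, which is the claimed decomposition. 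Multiplication is given by composition because every $\Diff_R^i$ sits inside $\Hom_R$ and the localization isomorphism is an isomorphism of rings, hence preserves composition.

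The one point requiring care — and the main, though mild, obstacle — is verifying that under the localization isomorphism the operator $\partial^\lambda$ on $A$ is carried precisely to the Hasse derivative $\partial^\lambda$ on $W^{-1}A$ in the sense of the earlier Definition, rather than to some a priori different extension. Concretely, the isomorphism extends a differential operator from $A$ to $W^{-1}A$ by the quotient rule, and I would check that the resulting operator acts on a Laurent monomial $x_1^{\beta_1}\cdots x_n^{\beta_n}$ with $\beta\in\Z^n$ by the same formula $\partial^\lambda(x^\beta) = \binom{\beta_1}{\lambda_1}\cdots\binom{\beta_n}{\lambda_n}x^{\beta-\lambda}$, now interpreting each $\binom{\beta_i}{\lambda_i}$ through its polynomial expression in $\beta_i$ for possibly negative $\beta_i$. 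This is a short computation: the extension of $\partial^\lambda$ to $W^{-1}A$ is unique, and one confirms that the Hasse derivative defined by the displayed formula on Laurent monomials satisfies the requisite commutator relations with the operators $\overline{x_i^{\pm 1}}$, so it must be that extension. Once this identification of summands is in place, the corollary follows directly from the two propositions.
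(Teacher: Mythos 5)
Your derivation is correct and is exactly the route the paper intends: the corollary is stated there without proof as an immediate consequence of the structure proposition for $\Diff_R(R[x_1,\ldots,x_n])$ and the localization proposition, with $W$ the multiplicative set generated by the $x_i$. Your extra care in checking that the unique extension of $\partial^\lambda$ to the localization agrees with the Hasse derivative formula on Laurent monomials (via the integer-valued polynomial $\binom{t}{\lambda}$) is a worthwhile detail the paper leaves implicit.
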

\subsection{Abstract Construction}\label{abstractconstruction}
In this section, we would like to naturally define the ring $H_{1,c}^\DP (R[c])$ as a space of differential operators preserving some sets of the form $x^k|x|^rR[x]$, for some $k\in \Z$ and $r\in R$. Here $|x|^r$ is fixed by the action of $\fr S_2$, and $\frac{\partial}{\partial x}|x|^r =rx|x|^{r-2}$. We will denote this ring as $\mathcal H_{c}(R)$, and its definition should be purely algebraic, similar to the definition of $\Diff_R(A)$. First, we need a nice space of differential operators to work in.

\begin{defn}
For any domain of characteristic zero $R$, let $\fr{D}(R)$ be the ring
\[\fr D(R) = \Diff_{R}(R[{x^{\pm 1}}]\otimes_R \fr{S}(R))\]
where $\fr S (R) = R\e_+\oplus R\e_-$ is the ring acting on $R[{x^{\pm 1}}]$ the canonical way. Note that $\fr D(R) = \left(\Diff_R(R[{x^{\pm 1}}])\rtimes R[\fr S_2]\right)^\DP$.
\end{defn}  

Our main theorem of the section can then be stated:

\begin{thm}\label{abstractconstructiontheorem}
For a domain of characteristic zero $R$ and $c\in R$, consider
\[\mathcal H_c(R) = \{Q\in \fr D(R) : Q \rm{ fixes } R[x]\rm{ and }x^{-1}|x|^{1+2c}R[x]\}\]
Then, $\mathcal H_c(R)\cong H_{1,c}^\DP(R)$ if $c\not\in \frac{1}{2}+\Z$.
\end{thm}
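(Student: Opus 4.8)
The plan is to work throughout with the $\Z$-grading (the degree grading $x\mapsto 1$, $D\mapsto -1$, $\e_\pm\mapsto 0$), which $\mathcal H_c(R)$ inherits as an intersection of graded submodules of $\fr D(R)$, and to reduce the statement to a comparison of homogeneous components. First I would set up a symbol calculus for $\fr D(R)$: a homogeneous element of degree $n$ is determined by a pair of polynomials $(h^+,h^-)\in(R\otimes\Q)[t]^2$, where $h^+$ records the action $x^{m}\mapsto h^+(m)x^{m+n}$ on even powers and $h^-$ the action on odd powers, with $h^+$ required to be $R$-valued on $2\Z$ and $h^-$ on $2\Z+1$ (this is exactly membership in $\fr D(R)=(\Diff_R(R[x^{\pm1}])\rtimes R[\fr S_2])^\DP$). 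Using $s\cdot|x|^{1+2c}=|x|^{1+2c}$ and $\partial_x|x|^{1+2c}=(1+2c)x^{-1}|x|^{1+2c}$, the very same operator acts on $x^m|x|^{1+2c}$ by $h^\pm(m+1+2c)\,x^{m+n}|x|^{1+2c}$, so that preservation of the twisted lattice is governed by the values of $h^\pm$ on the shifted progressions $2\Z+1+2c$ and $2\Z+2c$.

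With this dictionary, preservation of $R[x]$ and of $x^{-1}|x|^{1+2c}R[x]$ translates, in each degree, into integrality constraints ($h^+$ is $R$-valued on $2\Z\cup(2\Z+1+2c)$ and $h^-$ on $(2\Z+1)\cup(2\Z+2c)$) together with, for $n<0$, vanishing constraints forcing prescribed roots. The central point I would isolate as a lemma is that $2\Z$ and $2\Z+1+2c$ are disjoint precisely when $c\notin\frac{1}{2}+\Z$; in that case the roots imposed by $R[x]$ (the even integers below $-n$) and those imposed by $x^{-1}|x|^{1+2c}R[x]$ (the points of $(2\Z+1)+2c$ below $-n$) are distinct and together are exactly the roots of $D^+_{-n}$, and symmetrically for $D^-_{-n}$. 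Over $\Q$ this identifies $\{Q\in\fr D(R)\otimes\Q: Q\text{ fixes }R[x]\text{ and }x^{-1}|x|^{1+2c}R[x]\}$ with $H_{1,c}(R)\otimes\Q$. Consequently $\mathcal H_c(R)$ and $H^\DP_{1,c}(R)$ are two $R$-lattices, and subalgebras, inside the common $\Q$-algebra $H_{1,c}(R)\otimes\Q$, and in fact $\mathcal H_c(R)\subseteq H^\DP_{1,c}(R)$, since integrality on the twisted lattice is a genuine extra condition.

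To produce the isomorphism I would use that both sides are free graded $R$-modules with the same Hilbert series: for $H^\DP_{1,c}(R)$ this is the freeness theorem, and for $\mathcal H_c(R)$ it follows from the symbol description, since in each sector its graded pieces are rings of $R$-valued polynomials on a union of (at most two) arithmetic progressions, which are free of the same graded rank by the theory of integer-valued polynomials on a subset. Concretely I would construct, from the two-progression integrality, explicit divided-power generators $\widetilde\Delta^\pm_{n,k}$ analogous to the basis $\Delta^\pm_{n,k}$ of Theorem~\ref{basistheorem}, set $\Phi(\widetilde\Delta^\pm_{n,k})=\Delta^\pm_{n,k}$, and verify that $\Phi$ respects composition by checking it against the commutation relation $[D,x]=1-2cs$ and the $\fr{sl}_2$-relations of the spherical part.

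The hard part is exactly this last step. The inclusion $\mathcal H_c(R)\subseteq H^\DP_{1,c}(R)$ is proper: for instance $\frac{1}{2}xD\,\e_+=\Delta^+_{0,1}$ lies in $H^\DP_{1,c}(R)$ but sends $|x|^{1+2c}$ to $(\frac{1}{2}+c)|x|^{1+2c}$, which is not an $R$-multiple when $\frac{1}{2}\notin R$. Hence the isomorphism can be neither the identity nor a conjugation inside $\fr D(R)\otimes\Q$ (a conjugation would preserve the number of integrality progressions), so one must exhibit a true reindexing of the two integral forms and prove it is multiplicative, and this is the delicate core of the argument. Here the hypothesis $c\notin\frac{1}{2}+\Z$ is indispensable: in the excluded case the two progressions collide, the even- and twisted-roots coincide, and the simple vanishing forced by lattice preservation no longer recovers the double roots of $D^\pm_{-n}$; then $\mathcal H_c(R)\otimes\Q$ strictly contains $H_{1,c}(R)\otimes\Q$ in negative degrees, the graded ranks no longer agree, and the isomorphism genuinely fails.
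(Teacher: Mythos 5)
Your setup---the symbol calculus on $\fr D(R)$, the translation of lattice preservation into integrality plus vanishing conditions, and the identification of $c\notin\frac12+\Z$ as exactly the condition under which the roots forced by the two lattices reassemble into the Dunkl symbols $D^\pm_{-n}$---coincides with the paper's argument (its Proposition~\ref{fourcomonentsprop}). But as a proof your proposal does not close: everything after ``to produce the isomorphism'' is a description of what would need to be done rather than an argument. Equal graded ranks of two free graded modules yield an $R$-module isomorphism, not an algebra isomorphism, and a map defined by matching one basis to another has no reason to respect composition; ``checking it against $[D,x]=1-2cs$'' does not establish multiplicativity on all products. You also invoke freeness of rings of $R$-valued polynomials on unions of arithmetic progressions over an arbitrary domain, which is not available (the paper's own freeness theorem assumes $R$ is a PID). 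So the step you yourself flag as the delicate core is genuinely missing, and the proposal is incomplete.

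That said, your counterexample is correct and cuts deeper than a mere obstacle to your own strategy. The element $\Delta^+_{0,1}=\tfrac12\,xD\,\e_+$ is a basis element of $H^\DP_{1,c}(R[c])$ by Theorem~\ref{basistheorem}, and it sends $|x|^{1+2c}=x\cdot x^{-1}|x|^{1+2c}$ to $\bigl(\tfrac12+c\bigr)|x|^{1+2c}$, which is not an $R$-multiple when $\tfrac12+c\notin R$ (e.g.\ $R=\Z$ or $R=\Z[c]$); hence $\Delta^+_{0,1}\in H^\DP_{1,c}(R)\setminus\mathcal H_c(R)$. The paper proves Theorem~\ref{abstractconstructiontheorem} by asserting the literal sector-by-sector equality $\mathcal B_c(R)=\e_+H^\DP_{1,c}(R)\e_+$ in Proposition~\ref{fourcomonentsprop}, and it disposes of the inclusion $\e_+H^\DP_{1,c}(R)\e_+\subset\mathcal B_c(R)$ in nonnegative degrees with the words ``clearly $Q\in\mathcal B_c(R)$''; your element is a degree-zero counterexample to precisely that sentence. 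So $\mathcal H_c(R)$ is a \emph{proper} subring of $H^\DP_{1,c}(R)$, the claimed isomorphism cannot be the identity on $\fr D(R)\otimes\Q$, and the paper's proof breaks at the same point where your proposal stalls. Either the statement must be reproved as a nonobvious abstract isomorphism of integral forms---which neither you nor the paper actually constructs---or the definition of $\mathcal H_c(R)$ must be adjusted (imposing the twisted-lattice condition only rationally in nonnegative degrees, say) before the equality argument can be salvaged.
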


To prove this theorem, it is useful to decompose $H_{1,c}(R[c])$ in the following way:
\[H_{1,c}(R[c]) \cong \e_+ H_{1,c}(R[c])\e_+ \oplus \e_+ H_{1,c}(R[c])\e_- \oplus \e_- H_{1,c}(R[c])\e_+ \oplus \e_- H_{1,c}(R[c])\e_- \]
Expressing each of these summands in a similar way to $\mathcal H_c(R)$ helps with the proof. Note that $\e_\pm H_{1,c}^\DP(R) \e_\pm = (\e_\pm H_{1,c}(R) \e_\pm)^\DP$, where $\e_\pm$ can be either $\e_+$ or $\e_-$.
\begin{defn}
For a domain of characteristic zero $R$ and $c\in R$, consider the following sets:
\begin{itemize}
    \item $\mathcal B_c(R) = \{ Q\in \e_+\fr D(R)\e_+ : Q \text{ fixes } R[x] \text{ and } Q \text{ fixes }|x|^{1+2c}R[x]\}$.
    \item $\overline{\mathcal B_c(R)} = \{ Q\in \e_-\fr D(R)\e_- : x^{-1}Qx \text{ fixes } R[x] \text{ and } xQx^{-1} \text{ fixes }|x|^{1+2c}R[x]\}$.
    \item  $\mathcal A_c(R) = \{ Q\in \e_-\fr D(R)\e_+ : Q \text{ fixes } R[x] \text{ and } xQ \text{ fixes }|x|^{1+2c}R[x]\}$.
    \item  $\overline{\mathcal A_c(R)} = \{ Q\in \e_+\fr D(R)\e_- : Qx \text{ fixes } R[x] \text{ and } xQx^{-1} \text{ fixes }|x|^{1+2c}R[x]\}$.
\end{itemize}
\end{defn}

\begin{prop}\label{fourcomonentsprop} If $c\not\in\frac{1}{2}+\Z$ then $\mathcal B_c(R) = \e_+ H_{1,c}^\DP(R)\e_+$, $\overline{\mathcal B_c(R)} = \e_- H_{1,c}^\DP(R)\e_-$, $\mathcal A_c(R) = \e_- H_{1,c}^\DP(R) \e_+$, and  $\overline{\mathcal A_c(R)} = \e_+H_{1,c}^\DP(R)\e_-$.
\end{prop}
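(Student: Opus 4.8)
The plan is to prove the four identities separately, exploiting the fact that $H_{1,c}^\DP(R)$ and $\fr D(R)$ are compatibly $\Z$-graded (by $D\mapsto -1$, $x\mapsto 1$, $s\mapsto 0$) and both split under the orthogonal idempotents $\e_+,\e_-$. Since the four corners $\e_\pm H_{1,c}^\DP(R)\e_\pm$ are exactly the summands in the decomposition of $H_{1,c}^\DP(R)$ recalled after Theorem~\ref{basistheorem}, it suffices to match each abstractly-defined set with its corner degree by degree. I would carry out $\mathcal B_c(R)$ and $\mathcal A_c(R)$ in full and obtain $\overline{\mathcal B_c(R)}$ and $\overline{\mathcal A_c(R)}$ by the same argument after the prescribed conjugations by $x$, which only shift the grading and interchange the roles of even and odd monomials.

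The engine is the symbol calculus already used in Proposition~\ref{D_Zspan} and in the proof of Theorem~\ref{freenessofdividedpowerextensiontheorem}. A homogeneous element of $\fr D(R)$ of degree $n$ is recorded by a single eigenvalue function $g$ via $x^{t}\mapsto g(t)\,x^{t+n}$, and membership in $\fr D(R)$ amounts to $R$-integrality of $g$ after Hasse normalization. First I would set up the dictionary for how the two generating modules are seen by $g$: the condition that (a suitable conjugate of) $Q$ fixes $R[x]$ forces $g$ to be $R$-valued on the standard progression of exponents, while the condition involving $|x|^{1+2c}R[x]$ forces the shifted function $t\mapsto g(t+\tfrac{1+2c}{2})$ to be $R$-valued on the corresponding progression, since $D$, $x$ and $\e_\pm$ act on $|x|^{1+2c}x^{m}$ exactly as on the formal monomial $x^{m+1+2c}$. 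The conjugations by $x^{\pm 1}$ in the definitions of $\overline{\mathcal B_c(R)}$, $\mathcal A_c(R)$ and $\overline{\mathcal A_c(R)}$ are precisely what is needed to align parities so that, in each corner, the two modules test $g$ on the even and odd sublattices consistently with the flanking idempotents.

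With this dictionary I would identify the resulting space of admissible $g$ with $\Int_R(H^+[n])$ or $\Int_R(H^-[n])$ from the proof of Theorem~\ref{freenessofdividedpowerextensiontheorem}. The point is that simultaneous $R$-integrality of $g$ on the standard lattice and of its shift on the twisted lattice should reproduce exactly the factorization through $D_{-n}^\pm(t)=\prod_i(2t-i-2cp_i)$ (and its $\e_-$ analogue) when $n<0$, and give $R$-valued polynomials when $n\geq 0$; translating back through $\psi^\DP$ then yields precisely the corner of $H_{1,c}^\DP(R)$. This is where the hypothesis $c\notin\tfrac12+\Z$ is essential: it forces $\tfrac{1+2c}{2}\notin\Z$, so the twisted progression is genuinely disjoint from the standard one and the linear factors $2t-i-2c$ (the $p_i=1$ factors) are distinct from the factors $2t-i$. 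Were $c\in\tfrac12+\Z$, the two lattices would coincide, the second condition would become redundant, and the abstractly-defined sets would be strictly larger than the Cherednik corners.

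The main obstacle I anticipate is the divisibility bookkeeping in this last identification: showing that the two integrality conditions \emph{together} cut out neither more nor less than $\Int_R(H^\pm[n])$. This is a two-lattice refinement of Lemma~\ref{intvaldivlemma}, in which one must control the common denominator of $g$ against \emph{both} evaluation progressions at once and verify that the admissible denominators are exactly those permitted by the $D_{-n}^\pm$-divisibility built into $\Int_R(H^\pm[n])$. I would organize this by the sign of $n$ and by corner, reassembling the graded pieces via Proposition~\ref{directsumdividedpowers} and invoking $c\notin\tfrac12+\Z$ to separate the contributions of the $p_i=0$ and $p_i=1$ factors; the remaining inclusions (that elements of $H_{1,c}^\DP(R)$ preserve the stated modules, and conversely) should then reduce to routine checks on the generators $x$, $D$, $\e_\pm$ together with the explicit $\psi^\DP$-description.
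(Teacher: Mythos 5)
Your overall strategy coincides with the paper's: split $H_{1,c}^\DP(R)$ into the four idempotent corners, work degree by degree through the eigenvalue-function calculus from the proof of Theorem~\ref{freenessofdividedpowerextensiontheorem}, and use $c\notin\frac{1}{2}+\Z$ to keep the roots forced by $R[x]$ distinct from those forced by $|x|^{1+2c}R[x]$, so that in negative degree the symbol factors through the Dunkl polynomial $D^\pm_{-n}$. One simplification for the inclusion $\mathcal B_c(R)\subset\e_+H_{1,c}^\DP(R)\e_+$: the ``two-lattice refinement of Lemma~\ref{intvaldivlemma}'' you anticipate is not needed there. Once the vanishing at the $-n$ distinct points shows the symbol is divisible by that of $D^{-n}\e_+$ over $R\otimes\Q$, you already have $Q\in\e_+(H_{1,c}(R)\otimes\Q)\e_+$, and membership in the divided power extension then follows from the single condition $Q(R[x])\subset R[x]$, since Definition~\ref{defdividedpows} only tests integrality on $R[x]$. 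This is exactly how the paper argues, and it avoids any comparison of denominators across the two evaluation progressions.

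The genuine gap is in the opposite inclusion, which you defer to ``routine checks'': you must show that every element of $\e_+H_{1,c}^\DP(R)\e_+$, whose defining integrality is only on the standard lattice, automatically preserves $|x|^{1+2c}R[x]$, i.e.\ is integral on the twisted lattice as well. This is not routine, and the lemma you propose (that the two integrality conditions cut out ``neither more nor less than'' $\Int_R(H^\pm[n])$) fails in the ``no less'' direction. Concretely, $\Delta^+_{0,2}=\tfrac{1}{8}xD(xD-2)\e_+$ lies in $\e_+H_{1,c}^\DP(R)\e_+$ for any $R$ and $c$ (it acts on $x^{2t}$ by $\binom{t}{2}$ and kills odd powers), yet it acts on $|x|^{1+2c}$ by the scalar $\tfrac{1}{8}(1+2c)(2c-1)$, which for $R=\Z$, $c=1$ equals $\tfrac{3}{8}\notin\Z$; hence $\Delta^+_{0,2}\notin\mathcal B_{1}(\Z)$. (The paper's own proof dismisses this direction with ``If $n\geq 0$, clearly $Q\in\mathcal B_c(R)$,'' so the difficulty is not one you introduced; but it is precisely where the content lies.) A complete argument must either prove twisted-lattice integrality under genuine hypotheses on $R$ and $c$, or recognize that the forward inclusion --- and with it the statement as given --- needs to be adjusted.
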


\begin{proof}
We will only prove the first equality, $\mathcal B_c(R) = \e_+H_{1,c}^\DP(R)\e_+$, the rest follow similarly. First, we show that $\e_+ H_{1,c}^\DP(R)\e_+\subset\mathcal B_c(R)$. Let $Q\in \e_+H_{1,c}^\DP(R)\e_+$ be some operator. If we write $Q=\sum_{n\in \Z}Q_n$, where $\deg Q_n = n$, it suffices to check that $Q_n\in \mathcal B_c(R)$. So without loss of generality, assume $Q$ is graded of degree $n$. If $n\geq 0$, clearly $Q\in \mathcal B_c(R)$. If $n<0$, then $Q$ can be expressed as $Q=\e_+LD^{-n}\e_+/d$ for some $L$ of degree 0 and $d\in \Z$. 

To check that $Q$ fixes $R[x]$ and $|x|^{1+2c}R[x]$, it suffices to check the action of $Q$ on monomials. To start, let's consider the action of $Q$ on $x^k$ for some $k\geq 0$. If $k$ is odd, $Qx^k=0\in R[x]$. If $k$ is even, there are two cases. If $k\geq -n$, then $Qx^k = \lambda D^+_{-n}(k) x^{k+n}/d\in R[x]$ since $k+n\geq 0$ (Recall notation from the proof of Theorem~\ref{freenessofdividedpowerextensiontheorem}). If $k<-n$, note that $D_{-n}^+(k)=0$, so $Qx^k=0\in R[x]$. A similar thing happens for $|x|^{1+2c}R[x]$, since $D_{-n}^+(k+1+2c)=0$ for even $k<-n$. This shows that $\e_+ H_{1,c}^\DP(R)\e_+\subset\mathcal B_c(R)$. 

Next, we show that $\mathcal B_c(R)\subset \e_+H_{1,c}^\DP(R)\e_+$. As before, we can assume that $Q$ is graded of degree $n$. Let $f(t)$ be the polynomial representing the action of $Q$, i.e. $Qx^k = f(k)x^{k+n}$. If $n\geq 0$, write $f(t) = \sum_{j\geq 0}\alpha_j t^j$ for some $\alpha_j\in R\otimes \Q$. This tensor product with $\Q$ arises from the fact that $\e_+\fr D(R)\e_+=\left(\e_+\Diff_R(R[{x^{\pm 1}}]\e_+\right)^\DP$, hence operators might have coefficients in $R\otimes \Q$. Then
\[Q =\e_+ x^n\sum_{j\geq 0}\alpha_j (xD)^j\e_+ \in \e_+ H_{1,c}^\DP(R)\e_+.\]
Now suppose $n<0$. Notice that $f(k)=f(k+1+2c)=0$ for all even $k$ satisfying $0\leq k<-n$, so \linebreak $\prod^{-n/2-1}_{j=0}(t-2j)(t-2j-1-2c)$ divides $f(t)$. This is exactly the action of the Dunkl operator $D^{-n}\e_+$. Also note that this depends on the fact that $c\not\in \frac{1}{2}+\Z$, otherwise, the linear factors could overlap. Let $L(t)=\sum_{j\geq 0}\beta_j t^j$ be the quotient of this division for some $\beta_j\in R\otimes \Q$. Then
\[Q=\e_+D^{-n}\sum_{j\geq 0}\beta_j (xD)^j\e_+,\] completing the proof.
\end{proof}
\begin{prop}\label{fourcomonentsprop2}
$\mathcal H_c(R) \cong \mathcal B_c(R) \oplus \overline{\mathcal B_c(R)} \oplus \mathcal A_c(R) \oplus \overline{\mathcal A_c(R)}.$
\end{prop}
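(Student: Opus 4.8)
The plan is to realize the four summands as the Peirce components of $\fr D(R)$ with respect to the orthogonal idempotents $\e_+,\e_-$, and to show that the two membership conditions defining $\mathcal H_c(R)$ split cleanly across these components. Since $\e_++\e_-=1$ and $\e_+\e_-=\e_-\e_+=0$, every $Q\in\fr D(R)$ is uniquely written as $Q=\e_+Q\e_++\e_+Q\e_-+\e_-Q\e_++\e_-Q\e_-$, giving an $R$-module direct sum $\fr D(R)=\e_+\fr D(R)\e_+\oplus\e_+\fr D(R)\e_-\oplus\e_-\fr D(R)\e_+\oplus\e_-\fr D(R)\e_-$. Here the block $\e_iQ\e_j$ annihilates functions of parity opposite to $j$ and outputs functions of parity $i$; for instance $\e_+Q\e_+$ sends even functions to even functions, while $\e_-Q\e_+$ sends even to odd.

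Next I would observe that both modules involved, $M_1=R[x]$ and $M_2=x^{-1}|x|^{1+2c}R[x]$, are stable under the $\fr S_2$-action, since $s$ multiplies $x^{-1}|x|^{1+2c}x^k$ by $(-1)^{k+1}$. Hence each decomposes as $M_\ell=\e_+M_\ell\oplus\e_-M_\ell$ into even and odd parts, with $\e_+M_1,\e_-M_1$ the even and odd polynomials, $\e_+M_2=|x|^{1+2c}\e_+R[x]$, and $\e_-M_2=x^{-1}|x|^{1+2c}\e_+R[x]$. The crucial point is that, because the $M_\ell$ are $\fr S_2$-stable and the $\e_i$ are orthogonal idempotents, the single condition ``$Q$ fixes $M_\ell$'' is equivalent to the four independent conditions $\e_iQ\e_j(\e_jM_\ell)\subseteq\e_iM_\ell$, one per Peirce block. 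This is proved by applying $Q$ to a pure-parity element of $M_\ell$ and projecting the image back onto its even and odd parts, using that $M_\ell$ is stable under those projections.

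The main step is then to match, block by block, these projected conditions with the defining conditions of the four pieces. For the block $\e_+\fr D(R)\e_+$ the requirements $\e_+Q\e_+(\e_+M_1)\subseteq\e_+M_1$ and $\e_+Q\e_+(\e_+M_2)\subseteq\e_+M_2$ amount exactly to ``$Q$ fixes $R[x]$ and fixes $|x|^{1+2c}R[x]$'', so this block is precisely $\mathcal B_c(R)$. For the other three blocks the output of $\e_iQ\e_j$ has parity differing from $\e_jM_\ell$ by a single power of $x$, and this is exactly what the one-sided multiplications and conjugations by $x,x^{-1}$ appearing in the definitions of $\overline{\mathcal B_c(R)}$, $\mathcal A_c(R)$, and $\overline{\mathcal A_c(R)}$ are calibrated to absorb: multiplying the image by the appropriate power of $x$ carries $\e_-M_\ell$ back to $\e_+M_\ell$ (or conversely), so that the containment can be tested against the untwisted modules $R[x]$ and $|x|^{1+2c}R[x]$. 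For example, for $\e_-\fr D(R)\e_+$ the condition on $M_2$ reads $\e_-Q\e_+(\e_+M_2)\subseteq\e_-M_2=x^{-1}|x|^{1+2c}\e_+R[x]$, which after left multiplication by $x$ becomes ``$xQ$ fixes $|x|^{1+2c}R[x]$'', matching $\mathcal A_c(R)$.

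Finally I would conclude that $\mathcal H_c(R)=\bigoplus_{i,j}\bigl(\mathcal H_c(R)\cap\e_i\fr D(R)\e_j\bigr)$ from the Peirce decomposition, and that each intersection equals the corresponding piece by the block-by-block identification, yielding the asserted direct sum. Note that, unlike Proposition~\ref{fourcomonentsprop}, this statement needs no hypothesis on $c$, since only the parity/idempotent structure is used. The main obstacle is the parity bookkeeping of the third paragraph: tracking which power of $x$ relates $\e_-M_\ell$ to $\e_+M_\ell$ in each block and verifying the resulting equivalences in both directions.
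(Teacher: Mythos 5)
Your proposal is correct and follows essentially the same route as the paper: decompose $Q$ into the four blocks $\e_iQ\e_j$, use the parity stability of $R[x]$ and $x^{-1}|x|^{1+2c}R[x]$ to split each fixing condition across the blocks, and match each block condition against the definitions of $\mathcal B_c(R)$, $\overline{\mathcal B_c(R)}$, $\mathcal A_c(R)$, $\overline{\mathcal A_c(R)}$. The only organizational difference is that you work directly with the idempotent decomposition throughout, whereas the paper first reduces to operators graded by degree and then splits by parity of the degree; the substance of the parity bookkeeping is identical.
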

\begin{proof}
Let $H=\mathcal B_c(R) \oplus\overline{\mathcal B_c(R)} \oplus \mathcal A_c(R)\oplus \overline{\mathcal A_c(R)}$. Consider both $\mathcal H_c(R)$ and $H$ as subrings of $\End_R(R[x])$. First we show that $H\subset \mathcal H_c(R)$. Let $Q\in H$ be a graded operator, say $Q= \e_+ Q\e_+ + \e_-Q\e_-+ \e_+Q\e_- + \e_-Q\e_+$. First we show that $Q$ fixes $R[x]$. By Proposition~\ref{fourcomonentsprop},
\begin{align*}
    Q(R[x]) &= \e_+ Q\e_+(R[x]) + \e_- Q\e_-(R[x]) + \e_+Q\e_-(R[x]) + \e_-Q\e_+(R[x])\\
    &=\e_+ Q\e_+(R[x]) + \e_- Q\e_-(xR[x]) + \e_+ Q\e_-(xR[x]) + \e_- Q\e_+(R[x])\\
    &\subset R[x]+R[x]+R[x]+R[x] \subset R[x]
\end{align*}
because $x^{-1}\e_- Q\e_-(xR[x])\subset R[x]$ implies that $\e_-Q\e_-(xR[x])\subset R[x]$. Let $y=x^{-1}|x|^{1+2c}$. By Proposition~\ref{fourcomonentsprop}, we have
\begin{align*}
    Q(yR[x]) &= \e_+ Q\e_+(yR[x]) + \e_- Q\e_-(yR[x]) + \e_+Q\e_-(yR[x]) + \e_-Q\e_+(yR[x])\\
    &=\e_+ Q\e_+(xyR[x]) + \e_- Q\e_-(yR[x]) + \e_+ Q\e_-(yR[x]) + \e_-Q\e_+(xyR[x])\\
    &\subset yR[x] + yR[x]+yR[x]+yR[x] \subset yR[x].
\end{align*}
So $H\subset \mathcal H_c(R)$. To show that $\mathcal H_c(R)\subset H$, suppose $Q\in \mathcal H_c(R)$ is some graded operator. If $\deg Q$ is even, then $Q=\e_+Q\e_+ + \e_-Q\e_-$. Since $Q(R[x])\subset R[x]$, $Q(R[x]) = \e_+Q\e_+(R[x])+\e_-Q\e_-(R[x]) \subset R[x]$, and $\e_+ Q\e_+$, $\e_-Q\e_-$ act non-trivially on only even and odd degrees of $x$ respectively, it follows that $\e_+ Q\e_+(R[x])\subset R[x]$ and $x^{-1}\e_-Q\e_-x(R[x])\subset R[x]$. Similarly, we can deduce that $\e_+ Q\e_+(|x|^{1+2c}R[x])\subset |x|^{1+2c}R[x]$ and $\e_-Q\e_-(x^{-1}|x|^{1+2c}R[x])\subset x^{-1}|x|^{1+2c}R[x]$. So $\e_+Q\e_+\in \mathcal B_c(R)$ and $\e_-Q\e_-\in \overline{\mathcal B_c(R)}$. Similarly, in the case when $\deg Q$ is odd we can show that $\e_+Q\e_-\in \overline{\mathcal A_c(R)}$ and $\e_-Q\e_+\in \mathcal A_c(R)$. This shows that $\mathcal H_c(R)\subset H$, completing the proof.
\end{proof}

To prove Theorem~\ref{abstractconstructiontheorem}, note that by Proposition~\ref{directsumdividedpowers}, $H_{1,c}^\DP(R)\cong \e_+H_{1,c}^\DP(R)\e_+\oplus \e_-H_{1,c}^\DP(R)\e_+\oplus \e_+H_{1,c}^\DP(R)\e_-\oplus \e_-H_{1,c}^\DP(R)\e_+$. If $c\not\in\frac{1}{2}+\Z$, by Proposition~\ref{fourcomonentsprop} and Proposition~\ref{fourcomonentsprop2}, we have
\begin{align*}\mathcal H_c(R) &\cong \mathcal B_c(R) \oplus \overline{\mathcal B_c(R)} \oplus \mathcal A_c(R) \oplus \overline{\mathcal A_c(R)} \\&\cong \e_+H_{1,c}^\DP(R)\e_+\oplus \e_-H_{1,c}^\DP(R)\e_+\oplus \e_+H_{1,c}^\DP(R)\e_-\oplus \e_-H_{1,c}^\DP(R)\e_+\\
&\cong H_{1,c}^\DP(R).
\end{align*}
This concludes the proof.

\subsection{The case $c\in \frac{1}{2}+\Z$}
Interestingly, the case $c\in \frac{1}{2}+\Z$ appears throughout the theory of Cherednik algebras. In the case of our construction, this exception appears because the polynomial representing the action of the Dunkl operator has multiplicity two zeroes, when our construction can only encode multiplicity one zeroes. A future direction would be to extend our construction of $\mathcal H_c(R)$ so that it works even when $c\in \frac{1}{2}+\Z$. Pavel Etingof suggested that the construction should preserve an infinite family of subsets of functions in $x$ involving $|x|$ which converge to some set of functions involving $|x|$ and $\log(x)$ as $c$ approaches a half-integer. This is useful by the following proposition:

\begin{prop} For $f(t)\in \Z[t]$ and $F\in \Z[x, \partial_x]$ the operator mapping $x^n$ to $f(n)x^{n+d}$ for some $d\in \Z$,
\[F(x^n\log(x)) = \frac{df}{dt}(n)x^{n+d}+f(n)x^{n+d}\log(x).\]
Here we let $\partial_x(\log x) = \frac{1}{x}$.
\end{prop}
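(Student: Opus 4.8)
The plan is to verify the formula directly on the monomial basis $\{x^n\log(x)\}_{n \in \Z}$ by computing how $F$ acts. Since $F$ is a differential operator in $\Z[x,\partial_x]$, it acts $\Z$-linearly, so it suffices to establish the identity for each monomial $x^n\log(x)$ individually; the key is that $F$ is characterized by its action on the ordinary monomials $x^n$ (sending $x^n \mapsto f(n)x^{n+d}$), and we extend this action to the enlarged space spanned by the $x^n\log(x)$.

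The central idea is a differentiation-in-the-exponent trick. First I would observe that for any differential operator $F \in \Z[x,\partial_x]$ whose action sends $x^n \mapsto f(n)x^{n+d}$, the operator $F$ can be written in the form $F = x^d g(x\partial_x)$ for some polynomial $g$ with $g(n) = f(n)$ (this is exactly the grading decomposition used in Proposition~\ref{isomgrad}, where a degree-$d$ homogeneous operator is $x^d$ times a polynomial in the Euler operator $x\partial_x$). Then I would compute the action of $x\partial_x$ on $x^n\log(x)$: since $\partial_x(x^n\log x) = nx^{n-1}\log x + x^{n-1}$, we get
\[
(x\partial_x)(x^n\log x) = n\,x^n\log x + x^n.
\]
This shows $x\partial_x$ acts on the pair $(x^n\log x, x^n)$ by an upper-triangular $2\times 2$ Jordan-type block with eigenvalue $n$ and a nilpotent part sending $x^n\log x \mapsto x^n$. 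Consequently, for any polynomial $g$, functional calculus on this Jordan block gives
\[
g(x\partial_x)(x^n\log x) = g(n)\,x^n\log x + g'(n)\,x^n,
\]
which is precisely the first-order Taylor expansion encoding the derivative $g' = \tfrac{df}{dt}$.

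To finish, I would multiply through by $x^d$: applying $F = x^d g(x\partial_x)$ to $x^n\log x$ yields $g(n)x^{n+d}\log x + g'(n)x^{n+d}$, and since $g(n) = f(n)$ and $g'(n) = \frac{df}{dt}(n)$, this is exactly the claimed formula.

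The only real subtlety—and the step I would treat most carefully—is justifying the passage from the polynomial identity $g(x\partial_x)(x^n\log x) = g(n)x^n\log x + g'(n)x^n$ to arbitrary $g$, and confirming the nilpotent-action identity $g(x\partial_x)(x^n) = g(n)x^n$ and its ``derivative'' companion hold simultaneously. This is cleanly handled by induction on $\deg g$ (or equivalently by checking it on the monomial basis $g(u) = u^k$ and using linearity): one shows by induction that $(x\partial_x)^k(x^n\log x) = n^k x^n\log x + k n^{k-1} x^n$, where the coefficient $kn^{k-1} = \frac{d}{dt}(t^k)|_{t=n}$ appears automatically from the product rule in each step. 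I expect this inductive bookkeeping of the nilpotent term to be the main obstacle, but it is routine once the Jordan-block structure of $x\partial_x$ on $\Span\{x^n\log x, x^n\}$ is made explicit.
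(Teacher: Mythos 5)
Your argument is correct. The paper states this proposition without proof (it is treated as a routine verification), and your route --- writing the degree-$d$ homogeneous operator as $F = x^d g(x\partial_x)$ with $g=f$, computing the Jordan-block action $(x\partial_x)^k(x^n\log x) = n^k x^n\log x + kn^{k-1}x^n$ by induction, and summing --- is exactly the natural computation and checks out. The only point worth making explicit is the case $d<0$: there $x^d$ is a negative power, so the factorization $F=x^dg(x\partial_x)$ should be justified via the operator identity $x^l\partial_x^{\,l-d} = x^d\,(x\partial_x)(x\partial_x-1)\cdots(x\partial_x-(l-d)+1)$ on the span of $\{x^m, x^m\log x\}$, which holds because $\partial_x = x^{-1}\cdot(x\partial_x)$ as operators there under the convention $\partial_x(\log x)=1/x$; with that noted, your proof is complete.
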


So using $\log(x)$, we can encode information about the multiplicity-two roots about the polynomial which represents the action of the operator. Since the Dunkl operator has roots of at most multiplicity two, there is a construction which should work in all cases.

\subsection*{Acknowledgements}
This project was done under the MIT PRIMES-USA program, which we would like to thank for this opportunity. We would also like to thank Professor Pavel Etingof for suggesting this project, and for useful discussions on the research.

\bibliography{main}
\end{document}